\title[Rational double Hurwitz cycles]{Polynomiality, Wall Crossings and Tropical
Geometry of Rational Double Hurwitz Cycles}
\author{Aaron Bertram, Renzo Cavalieri, Hannah Markwig}
\address{Aaron Bertram - Department of Mathematics, University of Utah, Salt Lake City, UT, USA}
\email{bertram@math.utah.edu}
\address{Renzo Cavalieri - Department of Mathematics, Colorado State University, Fort Collins, CO, USA}
\email{renzo@math.colostate.edu}
\address{Hannah Markwig -  Universit\"at des Saarlandes, Fachrichtung
  Mathematik, Postfach 151150, 66041 Saarbr\"ucken, Germany }
\email{hannah@math.uni-sb.de}
\newcommand{\proj}{{\mathbb{P}^1}}
\newcommand{\bbZ}{\mathbb{Z}}
\newcommand{\bbR}{\mathbb{R}}
\newcommand{\bbC}{\mathbb{C}}
\newcommand{\bbP}{\mathbb{P}}
\newcommand{\bbH}{\mathbb{H}}
\newcommand{\calM}{\mathcal{M}}
\def\bary{\begin{array}} 
\def\eary{\end{array}} 
\def\ben{\begin{enumerate}} 
\def\een{\end{enumerate}}
\def\bit{\begin{itemize}} 
\def\eit{\end{itemize}}
\newcommand{\cT}{\mathcal{T}}
\newcommand{\HH}{\mathcal{H}}
\newcommand {\dunion}{\,\mbox {\raisebox{0.25ex}{$\cdot$} \kern-1.83ex $\cup$}
  \,}
\DeclareMathOperator {\ev}{ev}
\DeclareMathOperator {\ft}{f\/t}
\DeclareMathOperator{\TP}{\mathbb{T}\mathbb{P}^1}
\DeclareMathOperator{\trop}{trop}
\DeclareMathOperator{\fold}{fold}
\def\beq{\begin{equation}}                     %
\def\eeq{\end{equation}}                       %
\def\bea{\begin{eqnarray}}                     
\def\eea{\end{eqnarray}}
\def\IR{{\mathbb R}}
\theoremstyle{plain}
\newtheorem{thm}{Theorem}[section]
\newtheorem{lemma}[thm]{Lemma}
\newtheorem{prop}[thm]{Proposition}
\newtheorem*{conj*}{Conjecture}
\newtheorem{cor}[thm]{Corollary}
\newtheorem*{cor*}{Corollary}
\newtheorem*{co}{Tropical-Classical Correspondence}
\theoremstyle{definition}
\newtheorem{rem}[thm]{Remark}
\newtheorem{defn}{Definition}
\newtheorem{example}{Example}[section]
\newtheorem*{note}{Notation}
\newtheorem{construction}{Construction}
\newtheorem{conv}{Convention}
\DeclareMathOperator{\Aut}{Aut}
\DeclareMathOperator{\val}{val}
\DeclareMathOperator{\triv}{triv}
\DeclareMathOperator{\Cut}{Cut}
\DeclareMathOperator{\Glue}{Glue}
\DeclareMathOperator{\br}{br}
\DeclareMathOperator{\st}{st}
\DeclareMathOperator{\gl}{gl}
\newcommand{\GIT}[1]{/\!\!/_{\kern-.2em #1 \kern0.1em}}
\newcommand{\fc}{\mathfrak{c}}
\newcommand{\mon}{\overline{M}_{0,n}}
\newcommand{\LM}{\overline{M}_{0,2+r}(1^2,\varepsilon^{r})}
\newcommand{\Htk}{\tilde\bbH_k(\mathbf{x})}
\numberwithin{equation}{section}
\begin{document}

\begin{abstract}
We study rational double Hurwitz cycles, i.e. loci of marked rational stable curves admitting a map to the projective line  with assigned ramification profiles over two fixed branch points. Generalizing the phenomenon observed for double Hurwitz numbers, such cycles are piecewise polynomial in the entries of the special ramification; the chambers of polynomiality and wall crossings have an explicit and ``modular'' description. A main goal of this paper is to simultaneously carry out this investigation for the corresponding objects in tropical geometry, underlining a precise combinatorial duality between classical and tropical Hurwitz theory.  
\end{abstract}

\maketitle
\tableofcontents


\section{Introduction}

Hurwitz theory is the study of maps of algebraic curves, viewed as ramified covers of orientable surfaces. At the intersection of geometry, representation theory and combinatorics, it is an area that naturally lends itself to making bridges and connections. In this paper we study the combinatorial structure of certain Hurwitz spaces via a parallel investigation in tropical geometry.

\subsection{Summary}

The study of the connections between classical and tropical Hurwitz theory was initiated in \cite{CJM10} and \cite{CJMwc};  the tropical point of view provided a combinatorial interpretation of double Hurwitz numbers which was very well tuned to describing the polynomial aspects and the wall crossing phenomena occurring in the theory. We  continue this exploration by studying rational double Hurwitz loci inside spaces of (relative/tropical) stable maps, generically parameterizing covers of $\bbP^1$ with two prescribed ramification profiles and a part of the branch divisor fixed, and their pushforwards to the moduli space of curves, which we call double Hurwitz cycles. Besides the genus, the discrete invariants we fix are the total length $n$ of the special ramification profile, and the dimension of the loci we want to study. Then we study families of Hurwitz loci parameterized by integral points in an $(n-1)$ dimensional vector space. 

On the classical side,  we realize Hurwitz loci as the pullback via a natural branch morphism of appropriate strata in spaces of pointed  chains of projective lines (Losev-Manin spaces, Section \ref{sec:mscm}). This gives a boundary expression for Hurwitz cycles where the coefficients are piecewise polynomials in the entries of the special ramification data (Theorem \ref{thm:poly}).  

On the tropical side, we realize the Hurwitz loci as tropical Gromov-Witten cycles. Our tropical Hurwitz cycles are balanced polyhedral complexes; their topology is constant in the chambers of polynomiality of (classical) Hurwitz cycles, whereas their geometry (affine integral structure, weights and coordinates of vertices) varies in a polynomial way in terms of the special ramification profiles. 

Naturally, we also study the connection between classical and tropical Hurwitz cycles (Section \ref{sec:tcc}) and observe a natural combinatorial duality between tropical and classical strata. To be more precise, the stratification on the tropical side is the polyhedral structure inherited from the moduli space of tropical curves. The stratification on the classical side is the usual stratification in boundary classes. For Hurwitz cycles of dimension $d$, $k$-dimensional classical strata correspond to $d-k$-dimensional tropical strata, and the combinatorial type of the tropical stratum can be encoded in terms of the dual graph of the classical stratum.

We conclude the paper by studying how Hurwitz cycles vary across the walls of the chambers of polynomiality, both on the classical and on the tropical side. In a similar fashion to Hurwitz numbers, the wall crossing  formulae have an inductive  form:  the cycles in the formula can be obtained as pushforwards via appropriate gluing morphism  of pairs of boundary strata  coming from Hurwitz cycles where the profile data is split according to the equation of the wall, and the dimensions are split in all possible ways adding up to the correct one. 
Even though the final form of the tropical and classical wall-crossing formulae is essentially the same, there are some subtleties involved in even making sense of what a tropical wall crossing formula may be: that's why we treat the two cases separately. The classical wall crossing formula is Theorem \ref{thm:wc}, the (cleanest form of  the) tropical one Corollary  \ref{cor:twc}.

To make our exposition easier to follow, throughout the paper we use the one-dimensional case ((tropical) Hurwitz curves) as a running example.

\subsection{Context, Motivation and further directions of our research}

Because of the many and diverse categories equivalent to curves and their maps,  Hurwitz theory is by nature ``interdisciplinary'': exploring the dictionary between the tropical and classical approaches to Hurwitz theory is a natural thing to do. It has already been fruitful and hopefully will bear even more applications in the future. 

Before Hurwitz theory made its appearance in tropical geometry, the area of tropical enumerative geometry was pioneered by Mikhalkin's celebrated Correspondence Theorem which relates classical numbers of plane curves to their tropical counterparts \cite{Mi03}. Nowadays, numbers of tropical curves can (at least in the rational case) be understood as intersection products in an appropriate moduli space of tropical curves, just as in the classical world. For higher genus, understanding the appropriate moduli space of tropical curves and its connection to the moduli space of algebraic curves is an active area of research (see e.g.\ \cites{CMV12,Cap12a}). Also in the rational case, the connection between the intersection theory of the moduli space of algebraic curves and the moduli space of tropical curves is not yet completely understood. Combinatorial dualities such as the one we observe in Section \ref{sec:tcc} are present in many situations, but in our opinion they do not fully explain the success of tropical methods in enumerative geometry. We expect deeper connections to be discovered. Our paper contributes some interesting and geometrically meaningful families of cycles in the intersection ring of tropical $\mathcal{M}_{0,n}$, and makes important steps in understanding the correspondence of these cycles to their classical counterparts. We hope to extent the study to higher genus, and to contribute to the understanding of the deeper connections between moduli spaces of algebraic and tropical curves.

Classically double Hurwitz loci were a key ingredient in the proof of  Theorem $\star$, the main result of  \cite{gv:rvl}: tautological classes in the moduli space of curves of degree greater than $g-1$ (say $g+k$ with $k$ a non-negative integer) must admit an expression supported on the boundary, and more specifically on strata parameterizing curves with at least $k$ rational components. Then again they were applied to the study of tautological classes in \cite{gjv:last}, even though in neither of these cases they were viewed as families of loci with any sort of polynomial structure. This makes its appearence for the case of $0$-dimensional cycles, or more mundanely double Hurwitz numbers, in \cite{gjv:ttgodhn}. After \cites{ssv:cbodhn,CJM10,CJMwc} the algebro-combinatorial aspects of the piecewise polynomiality
of double Hurwitz numbers are well understood, leading the way to some really interesting geometric questions: 
\begin{description}
\item[ELSV-type formula for double Hurwitz numbers] can double Hurwitz numbers be obtained as intersections of tautological classes on some family of birational moduli spaces --- constant in the polynomiality chambers ---  in a way that naturally explains the structure of the polynomials? Can the wall crossings be somehow seen as coming from the birational transformations occurring when crossing the walls?
\item[Higher dimensional Hurwitz loci] How well does the piecewise polynomial structure carry over to higher dimensional loci? In particular can we understand full Hurwitz spaces (or rather their compactifications such as Admissible Covers or Relative Stable Maps) as tautological classes in the moduli space of curves?
\end{description}
This paper provides an exhaustive answer for the second question in genus $0$: here the full moduli space is birational to $\overline{M}_{0,n}$, and we show that every time we  increase the codimension by one by fixing a simple branch point in the branch divisor we obtain a polynomial class of one degree higher. In genus $0$ an ELSV formula  is trivially showed to hold in the one part double Hurwitz number case (\cite{gjv:ttgodhn}), and we are hopeful that a geometric understanding of the wall crossings will be complete soon. In higher genus, the situation is both more complicated and more interesting: here the Hurwitz moduli space represents a codimension $g$ tautological class, which has recently been at the center of attention because of its connections with symplectic field theory (\cite{eliashberg}).  Recently Hain (\cite{H11}) has produced  tautological  classes on $\overline{M}_{g,n}$,  which agree with double Hurwitz loci when restricted to the partial compactification of curves of compact type (however simple intersection computations show that Hain's class does not agree with either the Admissible Cover nor the Relative Stable Map compactification of the Hurwitz space already in genus one). Interestingly, Hain's class is homogeneous polynomial of degree $2g$. Understanding how Hain's class compares with Admissible Covers or Relative Stable Maps, besides being a very interesting question on its own, is likely a useful ingredient in the quest for an ELSV formula for double Hurwitz numbers. Our work is aiming in that direction in a couple different ways. On the one hand we interpolate a family of classes  starting from the zero dimensional loci which we understand very well. On the other hand we make a connection with tropical geometry, which is usually well tuned to give information about the  deeper boundary strata of the classical moduli spaces of curves.

\subsection{Acknowledgements}
This work is the result of a two week long \textit{Research in Pairs} at the Oberwolfach Institute for Mathematics, which the authors thank for its hospitality. The second author was at the intersection of supports by a Simons Collaboration Grant and NSF grant  DMS110549. The third author was partially supported by DFG-grant MA 4797/1-2.
 


\section{Background and Notation}
In this section we recall the basic definitions and constructions that are needed for the set-up of the theory. 
\subsection{Moduli Spaces of Curves and  Maps}
\label{sec:mscm}

We assume that the reader is familiar with $\mon$, the moduli space of rational pointed stable curves, a smooth projective variety of dimension $n-3$. The Chow ring of $\mon$ is generated by irreducible boundary divisors, with the only relations (besides the obvious ones given by empty intersections) generated by the WDVV relations (\cite{sk:m0n}). Irreducible boundary strata are identified by their dual graph: given a graph $\Gamma$, we denote the corresponding stratum by $\Delta_\Gamma$.

In \textit{weighted stable curves} one tweaks the stability of a rational pointed curve $(X= \cup_j X_j, p_1, \ldots, p_n)$ by assigning weights $a_i$ to the marked points and requiring the restriction to each $X_j$ of  $\omega_X +\sum a_ip_i$ to be ample (this amounts to the combinatorial condition that  $\sum_{p_i\in X_j} a_i + n_j>2$, where $n_j$ is the number of shadows of nodes on the $j$-th component of the normalization of $X$). 

When two points are given weight $1$ and all other points  very small weight, the space $\LM$ is classically known as the \textit{Losev-Manin} space {\cite{lm:lms}}: it parameterizes chains of $\bbP^1$'s with the heavy points on the two terminal components and light points (possibly overlapping amongst themselves) in the smooth locus of the chain.

Let  $\mathbf{x}$ be an $n$-tuple  of integers adding to $0$, and denote $\mathbf{x^+}$ (resp. $\mathbf{x^-}$) the sub-tuple of positive (resp. negative) parts. We consider moduli space of relative stable maps $\overline{M}_0(\bbP^1; \mathbf{x^+}0, \mathbf{x^-}\infty)$ and their ``rubber'' variant  $\overline{M}^\sim_0(\bbP^1; \mathbf{x^+}0, \mathbf{x^-}\infty)$ (see \cites{gv:rvl,mp:tvgwt}).  An important technical detail is we mark the preimages of the relative points. In order to mark some of the simple ramification points on the source curve, we introduce a space of relative weighted stable maps, where in addition to $0$ and $\infty$ there are $j$ moving simple transposition points that are marked and given weight $\varepsilon$.  These spaces  are typically denoted $\overline{M}^\sim_0(\bbP^1; \mathbf{x^+}0, \mathbf{x^-}\infty, \varepsilon t_1, \ldots, \varepsilon t_j)$. 
\begin{note}
In all our spaces of maps we make the notation lighter by  forgetting the target (always $\bbP^1$),  noting that $\mathbf{x}$ gives sufficient information to   determine the  relative points fixed at $0 $ and $\infty$ and that the additional transposition points are understood to be ``light''. For example we write $\overline{M}^\sim_0(\mathbf{x}, t_1, \ldots, t_j)$ for $\overline{M}^\sim_0(\bbP^1; \mathbf{x^+}0, \mathbf{x^-}\infty, \varepsilon t_1, \ldots, \varepsilon t_j)$. 
\end{note}

There is a natural stabilization map $\st$ to $\mon$ that forgets the map and remembers the (marked) points over $0$ and $\infty$, and a branch map to an appropriate quotient of a Losev-Manin space, recording the position of the $r+2=n$ branch points. 
Since a degree $0$ divisor on $\bbP^1$ determines a rational function up to a multiplicative constant, the map $\st: \overline{M}^\sim_0(\mathbf{x}) \to \mon$ is birational. Marking $j$ simple transpositions makes $\st$ into a degree ${{r}\choose{j}}$ cover. The branch map is a cover of the Losev-Manin space of degree the double Hurwitz number $H_0(\mathbf{x})$. 

\subsubsection{Multiplicities of boundary strata.}
Boundary strata in moduli spaces of relative stable maps corresponding to breaking the target are naturally described in terms of products of other moduli spaces of relative stable maps. It is important to keep careful track of various multiplicities coming both from combinatorics of the gluing and infinitesimal automorphisms (see \cite{gv:rvl}*{Theorem 4.5}). 
Let $S$ be a boundary stratum in $\overline{M}^\sim_0(\mathbf{x})$, parameterizing maps to a chain $T^N$  of $N$ projective lines. $S$ can be seen as the image of:
$$
\gl: \prod_{i=1}^N \mathcal{M}^\bullet_i \to S \subset \overline{M}^\sim_0(\mathbf{x}),
$$

where the $\mathcal{M}^\bullet_i$ are moduli spaces of possibly disconnected relative stable maps, where the relative condition imposed at the point $\infty$ in the $i$-th line matches the condition at $0$ in the $(i+1)$-th line. We denote by $\mathbf{z_i}=(z_i^1, \ldots, z_i^{r_i})$ such relative condition and by abuse of notation we say it is the relative condition at the $i$-th node of $T^N$. Then,
\begin{equation}
\label{bound:multi}
[S] = \prod_{i=1}^{N-1}\frac{\prod_{j=1}^{r_i} z_i^j}{|\Aut(\mathbf{z_i})|} \left[\gl_\ast\left( \prod_{i=1}^N \mathcal{M}^\bullet_i \right)\right].
\end{equation}

Equation \eqref{bound:multi} seems horrendous, but it amounts to the following recipe: the general element in $S$ is represented by a map from a nodal curve  $X$ to $T^N$, with matching ramification on each side of each node of $X$. The multiplicity $m(S)$ is the product of ramification orders for each node of $X$ divided by the (product of the order of the group of) automorphisms of each partition of the degree prescribing the ramification profile over each node of $T^N$.


\subsection{Tropical Geometry}
\label{sec:tg}
We assume that the reader is familiar with tropical varieties and tropical cycles in a vector space with a lattice, i.e.\ with weighted polyhedral complexes (possibly with negative weights in the case of cycles) satisfying the balancing condition around each cell of codimension one.
The exact polyhedral complex structure is not important. We do not distinguish between equivalent tropical cycles, i.e.\ cycles which allow a common refinement respecting the weights. 

 A rational function
  on a tropical cycle is a continuous function that is
  affine on each cell, and whose linear part is
  integer. To a tropical cycle $X$ and a rational function $\varphi$, we can associate the divisor $\varphi\cdot X$, a tropical subcycle of codimension one supported on
  the subset of $X$ where $\varphi$ is not linear \cite{AR07}*{Construction 3.3}.
We can also form multiple intersection products $\varphi_1 \cdot \ldots \cdot
  \varphi_m \cdot X$. They are commutative by \cite{AR07}*{Proposition 3.7}. The weights of cells of  intersection products can be computed locally as follows.
  
\begin{rem}\label{rem-intersect}
  Let $h_1, \ldots, h_m$ be linearly independent integer linear
  functions on $\mathbb{R}^n$. By $H : \mathbb{Z}^n \rightarrow \mathbb{Z}^m $ we
  denote the linear map given by $x \mapsto (h_1(x), \ldots, h_m(x))$. Consider
  the rational functions $ \varphi_i = \max\{h_i,p_i\}$ on $\mathbb{R}^n$, where $p_i$ are fixed constants in $\mathbb{R}$. These rational functions give rise
  to an intersection product, which obviously consists of only one cone with 
 weight equal to the order of the torsion part of $ \mathbb{Z}^m / \text{Im}(
  H)$, i.e. the greatest common divisor of
  the absolute values of the maximal minors of $H$ (see e.g.\ \cite{MR08}*{Lemma 5.1}). 
  \end{rem}

A morphism between tropical cycles
is a locally affine linear map, with
the linear part induced by a map between the underlying lattices.
 A rational function $\varphi$ on a tropical cycle $Y$ can be pulled back along a morphism $f:X\rightarrow Y$
  to the rational function $f^*(\varphi) = \varphi \circ f$ on $X$. Also, we can push forward
  subcycles $Z$ of $X$ to subcycles $f_*(Z)$ of $Y$ \cite{AR07}*{Proposition
  4.6 and Corollary 7.4}. 
\vspace{0.3cm}

We refer the  reader  to \cite{GKM07,Mi07,CJM10} for comprehensive background on moduli spaces of tropical curves and maps. A (marked, rational, abstract) tropical curve is a metric tree $
\Gamma $ without 2-valent vertices. Edges leading to 1-valent vertices have infinite length, are marked by the numbers $1,\ldots,N$ and are called ends. The space of all marked tropical curves with
$N$ ends is denoted $ \calM_{0,N} $. It can be embedded into $\mathbb{R}^{\binom{N}{2}-N}$ using the distance map. It follows from \cite{SS04a}*{Theorem 3.4},
\cite{Mi07}*{Section 2}, or \cite{GKM07}*{Theorem 3.7} that $\calM_{0,N}$ is a
tropical variety which is even a fan. All top-dimensional cones have weight one. 
For a subset $I\subset\left[N\right]$ of cardinality $1<|I|<N-1$ define $v_I$
to be the image under the distance map of a tree $\Gamma$ with exactly one bounded
edge of length one, the marked ends $i\in I$ on one side and the
ends $i$ for $i\notin I$ on the other. By abuse of notation, we often do not distinguish between a tree and its image under the distance map. We consider $\mathbb{R}^{\binom{N}{2}-N}$ not with its usual lattice, but with the lattice generated by the $v_I$.
The set of curves of a given combinatorial type is the interior of a
$k$-dimensional cone of $\calM_{0,N}$, where $k$ denotes the number of bounded
edges. The vectors $v_I$ generate the rays of $\calM_{0,N}$.

We denote by $\TP= \mathbb{R}\cup {\pm \infty}$ the simplest model of tropical $\mathbb{P}^1$.
A tropical cover is a map $h:\Gamma \rightarrow \TP$ where $\Gamma$ is a tropical curve and $h$ is a continuous map which is integral affine on each edge. For each bounded edge, the stretching factor with which it is mapped to $\mathbb{R}\subset\TP$ is called its weight.  
Edges which are contracted have weight $0$. Two tropical covers are called isomorphic if they differ by a translation of the base $\TP$.

We sometimes fix a reference orientation for the edges of $\Gamma$. We then use the convention that (nonzero) weights are positive whenever the image of the tail in $\mathbb{R}\cup {\pm \infty}$ is smaller than the image of the head, and negative otherwise. At each vertex, a tropical cover satisfies the balancing condition, i.e.\ the sum of the weights of incoming edges equals the sum of the weights of outgoing edges.
The degree $\mathbf{x}$ of a tropical cover is the multiset of weights of its non-contracted ends, where we make the convention that all ends point away from their 1-valent vertex.

We fix  a degree $\mathbf{x}$ and set $n=\ell(\mathbf{x})$. The space of isomorphism classes of tropical covers of degree $\mathbf{x}$ and with additional $r$ marked ends which are contracted to points is called $ \calM_{0,r}(\TP,\mathbf{x}) $.

\begin{conv}
\label{conv}
For practical purposes in computations, it is useful to pick a distinguished representative for each equivalence class of tropical covers. By
convention, we introduce a contracted end labeled  $n+1$ and require that 
it is mapped to $0\in \TP$ by the map $h$. We say the end $n+1$ fixes a parameterization of $\TP$.
\end{conv}

By \cite{GKM07}*{Proposition 4.7}, $ \calM_{0,r}(\TP,\mathbf{x}) $ is a
  tropical variety. It can be identified with $\calM_{0,N}$, where $N=n+r$, via the
  map that forgets the map $h$.
If we later want to count tropical covers using the space 
$ \calM_{0,r}(\TP,\mathbf{x}) $ we overcount in the situation where we have multiple ends of the same weight, since we mark all the ends here and thus make them artificially distinguishable even if they have the same weight. For counting purposes, we thus have to divide by $|\Aut(\mathbf{x})|$. Since we are mostly interested in the situation where the degree $\mathbf{x}$ is variable, this does not play an important role.

To compute intersection products in cells of
  tropical moduli spaces, we work in the local coordinates of the appropriate cone of $\calM_{0,N}$, i.e.\ in coordinates given by the lengths of each bounded edge.

For a subset $I\subset\left[N\right]$, there
is a forgetful map $\ft_I:\calM_{0,N}\longrightarrow \calM_{0,N-|I|}$ which
maps a tropical curve to the curve where we remove all ends with labels in $I$ (and possibly straighten $2$-valent vertices). Forgetful maps are
morphisms by \cite{GKM07}*{Proposition 3.9}.
The forgetful map $\ft:  \calM_{0,r}(\TP,\mathbf{x}) = \calM_{0,N} \rightarrow \calM_{0,n}$ which forgets all contracted ends is most important.

 For each contracted end with label $i$  the evaluation map
    $ \ev_i : \calM_{0,r}(\TP,\mathbf{x}) \rightarrow \mathbb{R}\subset\TP $
  assigns to a tropical curve the point in $\mathbb{R}\subset \TP$ to which the end is contracted by $h$. It is shown in \cite{GKM07}*{Proposition
  4.8} that these maps are morphisms.

For each marked end $i$  the tropical Psi-class $\Psi_i$, as a divisor, consists of all closed codimension one cones of $\calM_{0,N}$ parameterizing graphs  where the end $i$ is adjacent to a 4-valent vertex \cite{Mi07}. All these cones come with weight one. It is also known how to intersect tropical Psi-classes \cite{KM07}: every top-dimensional cone appearing in the intersection $\Psi_1^{k_1}\cdot \ldots \cdot \Psi_N^{k_N}$ corresponds to a combinatorial type satisfying the following: if the ends $i_1,\ldots,i_s$ are adjacent to a vertex $V$ then this vertex has valency $k_{i_1}+\cdots+k_{i_s}+3$. We associate the weight $(k_{i_1}+\cdots+k_{i_s})! \cdot (k_{i_1}!\ldots k_{i_s}!)^{-1}$ to the vertex $V$. The weight of a cone equals the product of these vertex weights for the corresponding combinatorial type.

For a subset $I\subset \{1,\ldots,N\}$ with $N-2 \geq |I|\geq 2$ we define the tropical boundary divisor $D_{I}$ to be the divisor cut out by the rational function $\varphi_{I}$ that sends $v_{I}$ to one and all other vectors $v_{I'}\neq v_I$ to zero \cite{Rau08}*{Definition 2.4}.

\subsection{Combinatorial Set-Up}
\label{sec:csu}

In this section we introduce some basic combinatorial objects that play a role in the development of Hurwitz cycles. We think of Hurwitz loci as functions of the special ramification profiles. We define the domain of definition of such functions and outline the linear structure it has. 

\begin{defn}
Fix a positive integer $n$. The lattice
$$
\mathcal{H}=\bbZ^n \cap \left\{ \sum_{i=1}^n x_i = 0 \right\}
$$
is the parameter space for double Hurwitz cycles. 
For any proper subset $I\subset \{1, \ldots, n\}$, the hyperplane
$$
W_I= \left\{ \sum_{i\in I} x_i = 0 \right\}
$$
is called a \textbf{wall}.  Any connected component of $\mathcal{H}\otimes \bbR \smallsetminus \cup_I W_I$ is called a \textbf{chamber}  (of polynomiality) and denoted by $\fc$.
\end{defn}

Next we introduce some notation for sets of decorated trees.
\begin{defn}
Let $\mathbf{x}\in \HH$. We denote by $\cT_{r-k}(\mathbf{x})$ the set of (non-metric) trees with $n$ ends labelled by the $x_i$'s and $r-k$ internal vertices of valence at least $3$.
We denote by $\cT^{\fc}_{r-k}(\mathbf{x})$ the set of tropical covers whose underlying tree is in $\cT_{r-k}(\mathbf{x})$. Combinatorially, trees in $\cT^{\fc}_{r-k}(\mathbf{x})$ are further decorated with the following data:
\begin{enumerate}
	\item A direction for the edges.
	\item Positive weights for each edge in such a way that the balancing condition is satisfied at every vertex.
	\item A total ordering of the vertices compatible with the direction of the edges.
\end{enumerate}
\end{defn}

Decorations (1) and (2) can in fact be deduced uniquely from imposing the balancing condition, and depend on the chamber $\fc$ that $\mathbf{x}$ belongs to.

\begin{lemma}
\label{lem:flat}
Any $\Gamma$ in $\cT_{r-k}(\mathbf{x})$ can be  ``lifted'' to a tropical cover as in Section \ref{sec:tg}.
\end{lemma}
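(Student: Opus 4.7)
The plan is to show that the balancing condition, together with the chamber hypothesis on $\mathbf{x}$, uniquely determines orientations and positive weights on the bounded edges of $\Gamma$, after which choosing arbitrary positive edge lengths and a compatible vertex ordering produces a genuine tropical cover $h\colon\Gamma\to\TP$.

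First I would exploit the tree structure. Given any bounded edge $e$ of $\Gamma$, deleting $e$ disconnects $\Gamma$ into exactly two subtrees; let $I(e)\subset\{1,\ldots,n\}$ denote the labels of the ends lying in one of these two components (the choice of side is arbitrary, it will only determine the orientation convention). Define the weight of $e$ to be
\[
w(e)=\Bigl|\sum_{i\in I(e)} x_i\Bigr|,
\]
and orient $e$ so that the resulting flow is positive. For the ends, set the weight of the $i$-th end to $|x_i|$, oriented outward if $x_i>0$ and inward if $x_i<0$ (or vice versa, depending on which of $0,\infty$ we declare to be the "sink"). A local check at an internal vertex $V$ then verifies the balancing condition: for each edge $e$ incident to $V$, the signed weight flowing out of $V$ along $e$ equals the sum of the $x_i$ over those ends in the subtree of $\Gamma\setminus\{V\}$ reached through $e$, so summing over all edges at $V$ telescopes to zero by the hypothesis $\sum x_i = 0$. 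This gives data (1) and (2) of the definition of $\cT^{\fc}_{r-k}(\mathbf{x})$, and moreover shows these data are unique.

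Next I would address positivity. By definition of the chamber $\fc$, the point $\mathbf{x}$ lies on no wall $W_I$, so $\sum_{i\in I(e)} x_i \neq 0$ for every proper subset $I(e)$ arising from the deletion of a bounded edge. Hence every $w(e)$ is strictly positive, as required. For the vertex ordering (decoration (3)), the oriented tree is a directed acyclic graph (trees have no cycles at all), so it admits a topological order on its vertex set; any such order provides a total ordering compatible with edge directions. Finally, to obtain an honest tropical cover in the sense of Section \ref{sec:tg}, choose arbitrary positive real lengths for the bounded edges of $\Gamma$, then assign each internal vertex a position in $\mathbb{R}\subset\TP$ consistently with the chosen ordering and the prescribed edge weights; this determines the map $h$ uniquely up to translation on the target (and is normalized by Convention \ref{conv} if a contracted end labeled $n+1$ is present).

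The only potentially delicate step is the positivity of the induced edge weights, since without it no tropical cover exists with underlying graph $\Gamma$; this is exactly where the chamber hypothesis on $\mathbf{x}$ enters, and conceptually it is the reason the wall-and-chamber decomposition of $\HH$ in the previous definition controls the combinatorial types of tropical Hurwitz covers. Everything else reduces to the elementary combinatorics of orienting a tree and producing a topological order on its vertices.
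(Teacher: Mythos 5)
Your proof is correct and follows essentially the same route as the paper's: cut each bounded edge, read off its weight as the (absolute value of the) partial sum of the $x_i$ on one side, orient so the weight is positive, and then take a topological order of the resulting directed tree to flatten it. You are in fact slightly more careful than the paper in isolating exactly where the chamber hypothesis is used (to guarantee strict positivity of the edge weights, since $\mathbf{x}$ lies on no wall $W_{I(e)}$), which is a welcome clarification rather than a deviation.
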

\begin{proof}
For most combinatorially versed people Lemma \ref{lem:flat} is obvious. We include a brief sketch for the combinatorially impaired such as the majority of the authors of this paper (see \cite{GKM07}*{Lemma 4.6} for a more honest discussion).
Any tree with leaves labelled by an $n$-tuple of integers adding to $0$ can be given positive weights and orientations to all internal edges in a unique way by imposing the balancing condition:  give an arbitrary orientation to all edges in the tree. Cut any internal edge to obtain two smaller connected graphs. The cut edge can be assigned weight by requesting that each the sum of labels on each of the two connected graphs is $0$. Now the statement follows by induction. Wherever  one obtained a negative weight at an internal edge, switch the orientation of the edge with respect to the arbitrary one chosen earlier and switch the sign of the weight. Having given an orientation to any internal edge immediately allows to flatten the graph, concluding the proof. 
\end{proof}

\begin{defn}
\label{mult}
The number of ways that a graph $\Gamma$ can be flattened is  the number of vertex orderings for the graph that are compatible with the direction of the edges of the graph.  We call such number $m_\fc(\Gamma)$. If the chamber $\fc$ is clear from the context, we also write $m(\Gamma)$. \end{defn}

In other words, the forgetful morphism $f:\cT^\fc_{r-k}(\mathbf{x})\to \cT_{r-k}(\mathbf{x})$ is surjective and $m_\fc(\Gamma)= |f^{-1}(\Gamma)|$.

\begin{defn}
\label{phi}
For $\Gamma \in \cT^\fc_{r-k}(\mathbf{x}) $ (or in $\cT_{r-k}(\mathbf{x})$ ) we define $\varphi(\Gamma)$ to be the product of the weights of all internal edges of $\Gamma$.
\end{defn}

Because in each chamber $\fc$ all edge weights are linear homogeneous polynomials in the $x_i$'s, the functions   $\varphi(\Gamma)$ are homogeneous polynomials in each chamber $\fc$ of degree equal the number of internal edges of $\Gamma$.


\section{Hurwitz Loci and Classes}
\label{sec:hl}

In this section we define Hurwitz classes and study their (piecewise) polynomial properties. We say that a family of Chow cycles $\alpha(\mathbf{x})$  in the moduli space of rational pointed stable curves is polynomial of degree $d$ and dimension $k$ if  $\alpha(\mathbf{x}) \in Z_k(\mon)[x_1, \ldots, x_n]_d$. This is equivalent to 
$\alpha$  having an expression as a combination of dimension $k$ boundary strata with coefficients polynomials in the $x_i$'s of degree $d$. 

\begin{figure}[tb]
\input{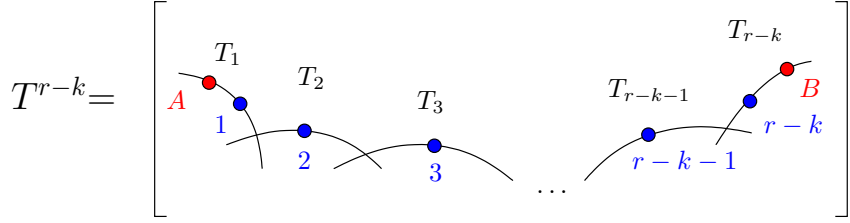}
\caption{The union of strata we denote by $T^{r-k}\subset \LM$ paramereterizes  chains of $r-k$ rational curves, where the $i$-th component hosts the $i$-th (light) marked point, and the remaining $k$ points are distributed in all possible ways among the various twigs.}
\label{fig:T}
\end{figure}

\begin{defn}
Let $\mathbf{x}\in \mathcal{H}$. Consider the union  of boundary strata $$T^{r-k}\subset \LM$$ parameterizing  chains of $r-k$ projective lines, where the $i$-th component hosts the $i$-th (light) marked point, as illustrated in Figure \ref{fig:T}. Referring to Figure \ref{fig:natmorph} for the names of the natural morphisms, we define the $k$-dimensional Hurwitz cycle:
\begin{equation}
\bbH_k(\mathbf{x}):=\st_\ast( \br^\ast(T^{r-k})) \in Z_k(\mon).
\end{equation}
Sometimes we want to look at the $k$-dimensional Hurwitz locus in the appropriate space of maps. We make the definition:
\begin{equation}
\tilde\bbH_k(\mathbf{x}):= \br^{-1}(T^{r-k}) \subseteq \overline{M}^\sim_{0}(\mathbf{x},t_1,\ldots, t_{r-k}).
\end{equation}
\end{defn}

\begin{figure}[tb]
\centerline{
\xymatrix{
\tilde{\bbH}_k(\mathbf{x})   \ar[d]  \ar[r]   &  \overline{M}^\sim_0(\mathbf{x}, t_1, \ldots, t_j)  \ar[d]^{\br}\ar[r]^{\st}& \mon\\
 [T^{r-k}] \ar[r]&  [\LM/S_k] & \\
}}
\caption{The $k$-dimensional Hurwitz locus is the inverse image via the branch map of the stratum $T^{r-k}$.}
\label{fig:natmorph}
\end{figure}

\begin{rem}
Hurwitz loci were introduced in \cite{gv:rvl}*{Section 4.4}. The only difference  in our definition is that we ``mark'' the branch points that we are fixing. The use of the more refined branch morphism to the Losev-Manin space gives us a more convenient expression of the locus in terms of the pull-back of a boundary stratum in $\LM$. The next Lemma establishes the equivalence with Graber-Vakil's definition and is parallel to the definition we make on the tropical side.
\end{rem}

\begin{lemma}
Consider the space of relative stable maps to a parameterized $\bbP^1$, with   the natural stabilization morphism $\st: \overline{M}_{0,r-k}(\mathbf{x})\to \mon$.  Let $\psi_i$ denote the first Chern class of the $i$-th cotangent line bundle, and $\ev_i$ the $i$-the evaluation morphism. Then
\begin{equation}
\bbH_k(\mathbf{x})=\st_\ast\left(\prod_{i=1}^{r-k} \psi_i \ev^\ast_i([pt])\right).
\end{equation}
\end{lemma}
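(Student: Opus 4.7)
The plan is to interpret both sides as the same Chow class on $\mon$, namely the pushforward by $\st$ of the locus of covers with $r-k$ marked simple branches at prescribed distinct target positions. The two expressions correspond to two natural ways of fixing these branch positions.

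For the right-hand side on $\overline{M}_{0,r-k}(\mathbf{x})$, the key computation is that $\psi_i\cdot\ev_i^\ast([\mathrm{pt}])$ represents the codimension-two condition ``the $i$-th marked point $t_i$ is a simple ramification point whose image in $\bbP^1$ is a prescribed point $p_i$''. To see this, let $F:\mathcal{C}\to\bbP^1$ denote the universal map over $\overline{M}_{0,r-k}(\mathbf{x})$ and let $s_i$ be the $i$-th section of the universal curve. By Riemann--Hurwitz applied fiberwise, the relative ramification divisor $R\subset\mathcal{C}$ has class $[R]=c_1(\omega_{\mathcal{C}/\overline{M}})+2F^\ast[\mathrm{pt}]$, so the locus in $\overline{M}_{0,r-k}(\mathbf{x})$ where $s_i$ meets $R$ has class $s_i^\ast[R]=\psi_i+2\,\ev_i^\ast[\mathrm{pt}]$. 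Intersecting with the image-fixing condition $\ev_i^\ast[\mathrm{pt}]$, and using $(\ev_i^\ast[\mathrm{pt}])^2=\ev_i^\ast([\mathrm{pt}]^2)=0$ in $A^\ast(\bbP^1)$, this product collapses to $\psi_i\cdot\ev_i^\ast[\mathrm{pt}]$. Taking the product over $i=1,\ldots,r-k$ yields the class of the locus of covers with $r-k$ marked simple branches at prescribed distinct positions.

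On the left-hand side, $T^{r-k}\subset\LM/S_k$ consists of boundary strata corresponding to chains of $r-k$ target twigs, each hosting exactly one of the marked branches, with the remaining $k$ unmarked branches distributed among the twigs. The pullback $\br^\ast(T^{r-k})$ in the rubber moduli parameterizes covers with $r-k$ distinct marked branch positions modulo the target $\Cstar$-action. Since $\st$ forgets the target entirely, the $\Cstar$-quotient distinguishing rubber from the parameterized-target setting is invisible in $\mon$, and both pushforwards describe the same geometric condition on the source curve.

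The main obstacle is the careful bookkeeping of integer multiplicities on each dim-$k$ boundary stratum of $\mon$. Using Equation~\eqref{bound:multi}, this reduces to checking that the weighted count of covers with source curve in a fixed stratum (weighted by the product of local ramification orders at target nodes and divided by the orders of the automorphism groups of the ramification profiles) coincides on both sides; a direct enumeration then confirms the equality of the two Chow classes.
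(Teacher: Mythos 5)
Your first step (computing $s_i^\ast[R]=\psi_i+2\,\ev_i^\ast[\mathrm{pt}]$ and killing the $2\,\ev_i^\ast[\mathrm{pt}]$ term against $\ev_i^\ast[\mathrm{pt}]$) is in the right spirit and matches the first ingredient the paper invokes (via \cite{op:cc}), though you should be aware that over the boundary the ramification divisor of the universal map is not a divisor in the naive sense on components contracted by $F$, so identifying $\psi_i\,\ev_i^\ast[\mathrm{pt}]$ with the honest locus of marked simple branch points requires the completed-cycles analysis (or at least a check that the genus-zero corrections vanish). That is a fixable issue. The genuine gap is in your middle paragraph: you assert that $\br^\ast(T^{r-k})$ ``parameterizes covers with $r-k$ distinct marked branch positions'' and that both sides ``describe the same geometric condition on the source curve.'' They do not. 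The stratum $T^{r-k}$ is a union of \emph{boundary} strata of $\LM$, where the target has degenerated into a chain of $r-k$ projective lines; its preimage under $\br$ consists of maps to a broken target, not of covers of $\bbP^1$ with branch points at assigned generic positions. The equality of the two classes after $\st_\ast$ is exactly a rational-equivalence statement between the interior cycle $\prod\ev_i^\ast[\mathrm{pt}]$ and the degenerate chain cycle, and this is the actual content of the lemma. The paper proves it by induction using the degeneration formula: one blows up the trivial family $\bbC\times T^j$ at a point of the section $\mathbf{\infty_j}$ and compares the class of a general fiber (which gives $\prod_{i=1}^{j+1}\ev_i^\ast[\mathrm{pt}]$) with the central fiber (which gives maps to the chain $T^{j+1}$). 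Your proposal replaces this with the assertion that ``a direct enumeration confirms the equality,'' which defers precisely the step that needs proof.

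A secondary gap of the same kind: the passage between the rubber space $\overline{M}^\sim_0(\mathbf{x},t_1,\ldots)$ (where $\bbH_k$ is defined) and the parameterized space $\overline{M}_{0,r-k}(\mathbf{x})$ (where the right-hand side lives) is not merely that ``$\st$ forgets the target.'' The two spaces have different dimensions, and the identification is that fixing \emph{one} of the branch points in the parameterized theory rigidifies the $\Cstar$-action and recovers the rubber theory; this is \cite{gv:rvl}*{Lemma 4.6} and should be invoked (or reproved) rather than absorbed into the forgetful map. With these two ingredients supplied --- the Graber--Vakil rigidification and the degeneration-formula induction --- your outline becomes the paper's proof.
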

\begin{proof}
Informally, the result is a combination of the three following facts: putting a $\psi$ class is equivalent to requesting the mark to be a point of simple ramification for the map (\cite{op:cc}); fixing one of the branch points gives the equivalence with the space of rubber-maps (\cite{gv:rvl}*{Lemma 4.6}); via degeneration formula, any additional branch point that is fixed amounts to breaking the target into a chain with one more projective line. We expand on this third fact for the benefit of a reader who is not familiar with degeneration techniques.

We assume by induction that the class $\prod_{i=1}^j \ev_i^\ast{[pt]}$ can be represented by the cycle of maps to a chain $T^j=\cup_{i=1}^jT_i$ of $j$ projective lines with the $i$-th mark on the $i$-th line. Consider the trivial family $X=\bbC\times T^j$ and note that it comes with two horizontal sections: $\mathbf{0_1}$ on $T_1$ and $\mathbf{\infty_j}$ on $T_j$. On $T_j$ consider the section $s(t)=1/t$ that meets the section  $\mathbf{\infty_j}$ at $t=0$. Now let $X'$ be the blow up the point $(0,\infty_j)$ in $X$ and consider the locus $\ev_{j+1}^{\ast}([s(t)])$ in the space of relative stable maps to $X'$ of degree $dF$ (where $F$ is the class of a fiber of the family and $d= \sum x_i^+$ ), relative to the divisor $\mathbf{x^+0_1}+\mathbf{x^-\infty_j}$. The degeneration formula (\cites{lr:df, l:df})  asserts that it is equivalent to represent $F$ by a general fiber or by the central fiber of the family. In the first case we have $\prod_{i=1}^{j+1} \ev_i^\ast{[pt]}$, in the second case we obtain the cycle of maps to the chain $T^{j+1}$ with the $i$-th mark on the $i$-th line.  
\end{proof}

The description of Hurwitz loci in terms of boundary strata in spaces of maps, together with an analysis of the multiplicities of the push-forwards to $\mon$ naturally leads to discover the (piecewise) polynomiality of the Hurwitz cycles.

\begin{thm}
\label{thm:poly}
For $\mathbf{x}\in \fc$, $\bbH_k(\mathbf{x})$ is a homogeneous polynomial cycle of degree $n-3-k$. 
\end{thm}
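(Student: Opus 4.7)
The plan is to write down an explicit expansion of $\bbH_k(\mathbf{x})$ as a sum over trees $\bar\Gamma \in \cT_{r-k}(\mathbf{x})$ of boundary classes $\Delta_{\bar\Gamma} \subset \mon$, with coefficient of the form $m_\fc(\bar\Gamma)\,\varphi(\bar\Gamma)$, and then read off polynomiality directly from $\varphi$. Since the number of internal edges of a tree in $\cT_{r-k}(\mathbf{x})$ with $r-k$ interior vertices and $n$ ends equals $(r-k)-1 = n-3-k$, and each such weight is a $\bbZ$-linear form in the $x_i$ on $\fc$, this will deliver exactly the claimed degree.

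The starting point is to identify the preimage $\br^{-1}(T^{r-k})$ inside the rubber moduli space. A point of $T^{r-k}$ represents a chain $T_1\cup\cdots\cup T_{r-k}$ of projective lines with a designated light branch point on the $i$-th component. Any relative stable map whose branch lies in this chain is determined combinatorially by a flattened decorated tree $\Gamma \in \cT^\fc_{r-k}(\mathbf{x})$: internal vertices are the source components (totally ordered by which $T_i$ they cover), bounded edges are the nodes of the source (weighted by their ramification order), ends are the marked preimages of $0$ and $\infty$, and edge directions come from the natural flow of the cover. Thus $\br^{-1}(T^{r-k}) = \bigcup_{\Gamma \in \cT^\fc_{r-k}(\mathbf{x})} S_\Gamma$, where $S_\Gamma$ denotes the closure of the locus of maps with type $\Gamma$.

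Next I would apply equation \eqref{bound:multi} to each $S_\Gamma$: the attached multiplicity is the product of ramification orders at the internal nodes divided by the automorphism factors of the partitions $\mathbf{z}_i$, which by Definition \ref{phi} equals $\varphi(\Gamma)/\prod_i |\Aut(\mathbf{z}_i)|$. Pushing forward by the stabilization morphism $\st$, the stratum $S_\Gamma$ maps onto the boundary stratum $\Delta_{\bar\Gamma}$ of $\mon$ indexed by the underlying unordered tree $\bar\Gamma$; the degree of this generically finite map should absorb the $|\Aut(\mathbf{z}_i)|$ denominators, and summing over the distinct flattened covers $\Gamma$ lying above a fixed $\bar\Gamma$ — whose count is precisely $m_\fc(\bar\Gamma)$ by Definition \ref{mult} — produces the coefficient $m_\fc(\bar\Gamma)\,\varphi(\bar\Gamma)$. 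By Lemma \ref{lem:flat}, within the chamber $\fc$ every edge weight of $\bar\Gamma$ is a fixed linear form $\pm\sum_{i\in I_e} x_i$ in $\mathbf{x}$, so $\varphi(\bar\Gamma)$ is homogeneous in $\mathbf{x}$ of degree $n-3-k$, and the theorem follows.

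The technical heart of the argument will be the bookkeeping at the push-forward step: reconciling the numerator $\varphi(\Gamma)$ from \eqref{bound:multi}, the $|\Aut(\mathbf{z}_i)|$ denominators, the degree of $\st|_{S_\Gamma}\colon S_\Gamma \to \Delta_{\bar\Gamma}$ (which encodes how many ways a generic stable curve in $\Delta_{\bar\Gamma}$ acquires the markings at the preimages of $0$ and $\infty$ and a compatible simple-transposition ordering), and the $m_\fc(\bar\Gamma)$ distinct flattenings above $\bar\Gamma$, so that they conspire into the clean coefficient $m_\fc(\bar\Gamma)\,\varphi(\bar\Gamma)$. Once this combinatorial identity is checked stratum-by-stratum, the polynomiality statement is immediate.
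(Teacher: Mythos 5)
Your overall strategy is the same as the paper's: decompose $\br^{-1}(T^{r-k})$ into boundary strata of maps indexed by flattened decorated trees, compute the multiplicity of each stratum via \eqref{bound:multi}, push forward by $\st$, and read off homogeneity of degree $(r-k)-1=n-3-k$ from the edge weights. The conclusion of the theorem does follow from this outline, because whatever constant (i.e.\ $\mathbf{x}$-independent) combinatorial factors appear in the coefficients cannot affect polynomiality or degree. However, the ``combinatorial identity'' you propose to verify at the push-forward step is false as stated, and the mechanism you invoke for it is wrong. The correct coefficient (Lemma \ref{coeff}) is $m_\fc(\Gamma)\,\varphi(\Gamma)\prod_{v}(\val(v)-2)$, not $m_\fc(\Gamma)\,\varphi(\Gamma)$. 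The extra factor is the degree of $\st$ restricted to each vertex moduli space: over each $T_i$ the unique nontrivial connected component gives a moduli space of connected rubber maps with \emph{one} marked simple transposition, and by the discussion in Section \ref{sec:mscm} its stabilization to $\overline{M}_{0,\val(v_i)}$ has degree $\binom{\val(v_i)-2}{1}=\val(v_i)-2$. This factor does \emph{not} get absorbed by the $|\Aut(\mathbf{z_i})|$ denominators: in genus zero two components of the source meet in at most one node, so every sub-partition of $\mathbf{z_i}$ supported on a fixed pair of components has length one and the relevant automorphism groups are all trivial --- there is nothing to cancel against. If you actually carried out the stratum-by-stratum check you describe, it would fail.

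Two smaller gaps in the same step. First, your decomposition $\br^{-1}(T^{r-k})=\bigcup_{\Gamma\in\cT^\fc_{r-k}(\mathbf{x})}S_\Gamma$ is incomplete: $\br^{-1}(T^{r-k})$ also contains strata with more than one nontrivial connected component over some $T_i$ (these do not admit the total vertex ordering required of elements of $\cT^\fc_{r-k}(\mathbf{x})$). You need the dimension count of Lemma \ref{spec} to see that exactly these strata die under $\st_\ast$, so that only the strata you listed contribute. Second, the numerator of \eqref{bound:multi} is a product over \emph{all} nodes of the source, including those on components mapping trivially to some $T_i$; identifying the surviving product with $\varphi(\Gamma)$ (the product over internal edges of the \emph{stabilized} dual tree) requires cancelling each trivial component's two equal node-ramification orders against the $1/y_i^j$ automorphism factor it contributes. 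None of this changes the degree count, so the theorem survives, but the explicit expansion at the heart of your argument needs the correction above.
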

\begin{proof}
Theorem \ref{thm:poly} is a consequence of Lemmas \ref{spec}, \ref{flatten} and \ref{coeff}, where we also give an explicit description of the (polynomial) multiplicity of each boundary stratum.
\end{proof}

A simple observation that gives us a surprising amount of mileage, is that many boundary strata in $\Htk$ don't survive being pushed forward to $\mon$.

\begin{lemma}
\label{spec}
Let  $\mathbf{x}\in \fc$ and $\tilde\Delta$ an irreducible component of $\tilde\bbH_k(\mathbf{x})$, corresponding to a stratum of maps to the rational chain $T=T_1\cup\ldots\cup T_{r-k}$. Then $\pi_\ast(\tilde\Delta)\not=0$ if and only if for every $i=1, \ldots, r-k$ there is exactly one connected rational curve mapping non-trivially to $T_i$. 
\end{lemma}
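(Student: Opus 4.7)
The plan is a dimension-counting argument: $\tilde\Delta$ has dimension $k$, and $\st_*(\tilde\Delta)\neq 0$ precisely when the restriction $\st|_{\tilde\Delta}$ is generically finite, i.e.\ when its image in $\mon$ is also $k$-dimensional. So I would compare, on both sides, the sources of moduli and identify those which survive stabilization.

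First I would set up local coordinates on $\tilde\Delta$. A point of $\tilde\Delta$ is a nodal rational source $C\to T$, where $T=T_1\cup\ldots\cup T_{r-k}$ and the matching condition is imposed at each internal node. The $k$-dimensional moduli come from the positions of the simple branch points on the chain, modulo the rubber $\bbG_m$-action on each target component. Writing $r_i$ for the number of simple branch points that land on $T_i$ and applying Riemann--Hurwitz for the (possibly disconnected) cover $h^{-1}(T_i)\to T_i$ with $c_i$ connected non-trivial components gives the fundamental identity $r_i=\ell(\mu_i^-)+\ell(\mu_i^+)-2c_i$, which is what controls the combinatorics below.

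For ($\Leftarrow$), assume $c_i=1$ for every $i$, so $h^{-1}(T_i)=C_i$ is a single connected nodal rational chunk. I would argue that moving a branch point $p$ along $T_i$ moves its preimages on $C_i$, and in particular moves the nodes (where $C_i$ meets $C_{i\pm1}$) or the $\mathbf{x}$-marks (when $i=1$ or $i=r-k$) on $C_i$. Reading this off component by component, the $k$ branch-point moduli of $\tilde\Delta$ map injectively (generically) to the $k$ moduli of the marks and nodes of the stabilized source $\st(C)\in\mon$, so $\st|_{\tilde\Delta}$ is generically finite.

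For ($\Rightarrow$), suppose some $c_i\geq 2$. The identity above forces the total $\ell(\mu_i^-)+\ell(\mu_i^+)$ to grow with $c_i$, and since the marked points $\mathbf{x}$ live only on chunks over $T_1$ and $T_{r-k}$, at least one chunk $A\subset h^{-1}(T_i)$ must be an \emph{étale} subcover (totally ramified over both neighboring nodes, with no interior branching). The aim is to produce a positive-dimensional fiber of $\st|_{\tilde\Delta}$ from $A$: namely, after forgetting the light $t_i$-marks, $A$ becomes a genus-$0$ component with only two special points (a single node and either another node or at most one $\mathbf{x}$-mark inherited from a further collapse), hence gets contracted in the stabilization. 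I would then show that the one-parameter family of rubber scalings that acts on $A\to T_i$ and is not absorbed by the quotient on the other chunks induces a $\bbG_m$-orbit inside $\tilde\Delta$ whose image under $\st$ is a single point. Concretely, the argument compares the moduli produced by Riemann--Hurwitz on each chunk (which contribute $r_i-c_i+1$ to the dimension) with the moduli of the stabilized source (which only sees the chunks surviving stabilization), and the deficit is exactly the number of étale chunks forced by $c_i>1$.

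The main obstacle is the careful bookkeeping in the backward direction: one must track the rubber $\bbG_m^{r-k}$ carefully enough to see that although it is globally quotiented out when counting $\dim\tilde\Delta=k$, the presence of étale chunks produces an \emph{extra} one-parameter subgroup whose action is trivial on every stable component of the source after contraction, and this is the source of the positive-dimensional fibers of $\st|_{\tilde\Delta}$.
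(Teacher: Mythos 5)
Your overall strategy---$\dim\tilde\Delta=k$, so $\st_\ast\tilde\Delta\neq 0$ iff $\st|_{\tilde\Delta}$ is generically finite onto its image---is exactly the paper's, and your forward direction is fine in spirit. The backward direction, however, rests on two claims that are both false. First, $c_i\geq 2$ does \emph{not} force an \'etale (trivial) chunk over $T_i$: the $k$ unmarked simple branch points are distributed arbitrarily among the twigs, so $T_i$ may carry several of them and $h^{-1}(T_i)$ may consist of two or more connected components \emph{each} with interior branching (each non-trivial chunk only needs $\ell(\mu_0)+\ell(\mu_\infty)\geq 3$ for its own profiles, which is easy to arrange for all chunks simultaneously once $r_i\geq 2$). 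Second, even where trivial chunks do occur---and they occur generically, e.g.\ carrying the parts of $\mathbf{x}$ that ``pass through'' an intermediate twig---they do not give an extra one-parameter subgroup: the rubber $\mathbb{G}_m$ acts on the target component $T_i$ and hence simultaneously on \emph{all} chunks over $T_i$, and a trivial chunk $z\mapsto z^d$, totally ramified at the two nodes, is rigid with no moduli of its own. There is no independent scaling of a single chunk whose orbit could be exhibited as a collapsed curve in $\tilde\Delta$.

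The correct (and much shorter) mechanism, which is the paper's, is a count of components surviving stabilization: every non-trivial chunk has at least three special points (preimages of the nodes/relative points), hence survives as a distinct component of the stabilized source, so $\st(\tilde\Delta)$ lies in a boundary stratum of $\mon$ indexed by a tree with $\sum_i c_i$ vertices, which has dimension $r-\sum_i c_i$. Since each $c_i\geq 1$ and there are $r-k$ twigs, this dimension is at most $k$, with equality iff $c_i=1$ for all $i$; if some $c_i\geq 2$ the image has dimension strictly less than $k=\dim\tilde\Delta$ and the pushforward vanishes, with no need to exhibit the positive-dimensional fibers explicitly. I would replace your \'etale-chunk argument by this count.
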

\begin{proof}
Let $f:X\to T$ correspond to a general point in $\tilde\Delta$.
Over each $T_i$ there must be at least one connected component mapping non-trivially. By a trivial dimension count the push forward of $\tilde\Delta$ does not vanish precisely when the stabilization of $X$ is a rational curve with $r-k$ components and therefore there cannot be more than one non-trivial component over each $T_i$. 
\end{proof}

Next we observe that each boundary stratum of dimension $k$ in $\mon$ appears as the push forward of some stratum in $\Htk$.

\begin{lemma}
\label{flatten}
Let   $\mathbf{x} \in \fc$,  $\Gamma\in \cT_{r-k}(\mathbf{x})$. Then there exists  an irreducible component $\tilde\Delta$ in $\tilde\bbH_k(\mathbf{x})$ such that the stabilization of the source curve of the general element of $\tilde\Delta$ has dual graph $\Gamma$.
\end{lemma}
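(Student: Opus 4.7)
The plan is to reverse-engineer a boundary stratum of $\tilde{\bbH}_k(\mathbf{x})$ from the combinatorial data of $\Gamma$, by using $\Gamma$ as the dual graph of the stabilized source curve and reading off the target chain and ramification profiles from the flattening.

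First, I would invoke Lemma \ref{lem:flat} to promote $\Gamma$ to a tropical cover: the balancing condition uniquely determines positive integer weights and edge orientations in the chamber $\fc$. Since the directed graph obtained this way is a tree (hence acyclic), any linear extension of the induced partial order on the $r-k$ internal vertices yields a total ordering $V_1, \ldots, V_{r-k}$ compatible with edge directions. Equivalently, this produces an element of $\cT^{\fc}_{r-k}(\mathbf{x})$ mapping to $\Gamma$ under the forgetful map of Definition \ref{mult}.

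Next, I would use this flattening to construct an explicit stratum $\tilde{\Delta}$ in $\tilde{\bbH}_k(\mathbf{x})$. Build a target chain $T = T_1 \cup \cdots \cup T_{r-k}$ of projective lines, where the $i$-th light branch point $t_i$ lies on $T_i$. For the source, take a nodal rational curve $X$ with one component $X_i$ for each vertex $V_i$ of $\Gamma$, with $X_i$ mapping to $T_i$. The ends of $\Gamma$ at $V_i$ labelled by positive (resp.\ negative) $x_j$ correspond to marked ramification points of $X_i$ over $0 \in T_1$ (resp.\ $\infty \in T_{r-k}$) with ramification profile equal to the absolute value of the end weight; the internal edges of $\Gamma$ incident to $V_i$ correspond to nodes of $X$ matching a ramification point of $X_i$ with one of $X_{i'}$, whose common ramification order is the weight of the edge. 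Over the intermediate node of $T$ between $T_i$ and $T_{i+1}$, one places the simple branching condition coming from $t_i$ on $T_i$ (and $t_{i+1}$ on $T_{i+1}$): by the ordering compatibility, all edges of $\Gamma$ crossing the cut between $\{V_1,\ldots,V_i\}$ and $\{V_{i+1},\ldots,V_{r-k}\}$ are directed consistently and give well-defined matching conditions. The resulting family of relative stable maps to $T$ is a boundary stratum $\tilde{\Delta} \subset \overline{M}^\sim_0(\mathbf{x}, t_1, \ldots, t_{r-k})$, and by construction the image under $\br$ of a general map in $\tilde{\Delta}$ is the point of $T^{r-k}$ with the $i$-th light marking on $T_i$, so $\tilde{\Delta} \subseteq \tilde{\bbH}_k(\mathbf{x})$.

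Finally, I would check that $\tilde{\Delta}$ is an irreducible component of top dimension, i.e.\ of dimension $k$: the moduli of the map on each $X_i$ contributes $\mathrm{val}(V_i) - 3$ dimensions (rubber maps of a fixed genus $0$ curve with fully ramified relative profiles), and summing over vertices gives $\sum_i(\mathrm{val}(V_i) - 3) = 2(r-k-1) + n - 3(r-k) = n + k - r - 2 = k$, where I used $r = n-2$ for the length of the branch divisor minus $2$. Stabilizing the source of such a general map contracts the components $X_i$ that carry at most two special points; by Lemma \ref{spec} applied to the construction, no contraction occurs on components with at least three special points, and the dual graph of the stabilization is precisely $\Gamma$. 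The main subtlety to watch is the bookkeeping that the valence and stability condition at each $V_i$ matches the non-contraction of $X_i$, which is guaranteed by the hypothesis that $\Gamma \in \cT_{r-k}(\mathbf{x})$ has all internal vertices of valence $\geq 3$.
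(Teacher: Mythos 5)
Your proposal is correct and follows essentially the same route as the paper: both reduce the statement to the combinatorial Lemma \ref{lem:flat} via the bijection between flattened (weighted, directed, vertex-ordered) trees and boundary strata of $\overline{M}^\sim_{0}(\mathbf{x},t_1,\ldots, t_{r-k})$ --- you simply carry out explicitly the inversion of the paper's ``tropical dual graph'' construction, which the paper leaves implicit, and add the dimension count. The one detail your explicit construction glosses over is that when an end of $\Gamma$, or an edge joining $V_i$ to $V_{i'}$, must reach a component of the target chain not adjacent to $T_i$, the source curve needs extra components mapping trivially over the intermediate $T_j$'s (the $2$-valent vertices of the paper's pre-stable tropical dual); this is routine and does not affect the argument.
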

\begin{proof}
This statement becomes  transparent after noting that boundary strata in $\overline{M}^\sim_{0}(\mathbf{x},t_1,\ldots, t_{r-k})$ are in bijective correspondence with ``tropical dual graphs''. Given a boundary stratum  $[S]$ whose general element is given by a map $f: X\to T^{r-k}$: 
\begin{itemize}
\item the chain $T^{r-k}$ is replaced by the interval $[0, r-k+1]$ with the $i$-th projective line corresponds to the point $i$ and $i$-th node corresponding to the segment $(i, i+1)$; the points $0$ and $r-k+1$ correspond to the relative points.
\item over point $i$ draw one vertex for each connected component of $f^{-1}(T_i)$; 
\item over segment $(i,i+1)$ draw one edge for each node  of $X$ above the $i$-th node of $T^{r-k}$; the edge connects the appropriate vertices and is weighted with the ramification order at the node;
\item over $[0,1)$ and $(r-k,r-k+1]$ draw edges corresponding to the relative condition $\mathbf{x}$.
\end{itemize}
We call the graph thus obtained the \textit{pre-stable} tropical dual of $[S]$.  We stabilize the graph by forgetting all two-valent vertices to obtain what we call the tropical dual of $[S]$. 
This procedure is illustrated in Figure \ref{fig:flatten}  and clearly it can be inverted to identify a boundary stratum of maps given a tropical cover. With this translation Lemma \ref{flatten} is equivalent to the purely combinatorial statement of Lemma \ref{lem:flat}.
\end{proof}


\begin{figure}[tb]
\input{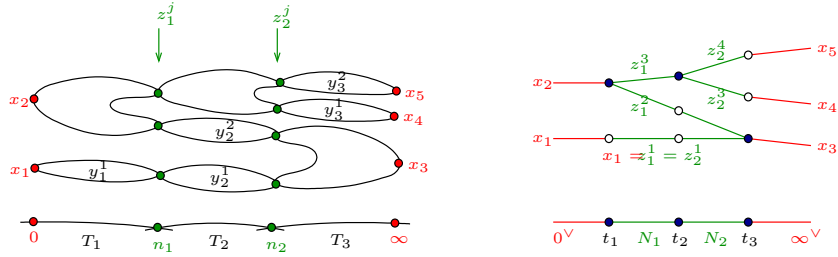}

\caption{The construction of the tropical dual graph associated to a boundary stratum of relative maps. The $y_i^j$ are the degrees of the trivial covers of the $i$-th twig, and the $z_i^j$ the ramification orders over the $i$-th node. Note that the $z$'s on either side of a trivial cover are equal to the corresponding $y$ (e.g $x_1=y_1^1=z_1^1=y_2^1=z_2^1$). This makes it possible to erase the two valent vertices and have the $z_i^j$ become the weights of the edges of the tropical dual graph.}
\label{fig:flatten}
\end{figure}

\begin{lemma}
\label{coeff}
Let   $\mathbf{x} \in \fc$,  $\Gamma\in \cT_{r-k}(\mathbf{x})$ and $\Delta_{\Gamma}$ the corresponding boundary stratum in $\overline{M}_{0,n}$.  Let $m_\fc(\Gamma)$  and $\varphi(\Gamma)$ as in Definitions \ref{mult}, \ref{phi} and $f:\cT^\fc_{r-k}(\mathbf{x})\to \cT_{r-k}(\mathbf{x})$ be the natural forgetful morphism. Then
\begin{align}
\label{eq:pomult}
\bbH_k(\mathbf{x}) & =    \sum_{\tilde\Gamma \in \cT^\fc_{r-k}(\mathbf{x})} \varphi(\tilde\Gamma)\prod_{v}(\val(v)-2)\ \Delta_{f(\tilde\Gamma)} \nonumber\\
 & =\sum_{\Gamma \in  \cT_{r-k}(\mathbf{x})}  m_\fc(\Gamma)\varphi(\Gamma) \prod_{v}(\val(v)-2)\Delta_{\Gamma}.
\end{align}
\end{lemma}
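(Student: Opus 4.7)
The plan is to identify which boundary strata of $\tilde\bbH_k(\mathbf{x})$ survive the pushforward $\st_\ast$, and then to compute the coefficient of each resulting $\Delta_{f(\tilde\Gamma)}$ as a product of two multiplicities: the intrinsic multiplicity of the stratum inside $\tilde\bbH_k(\mathbf{x})$ obtained from the gluing formula \eqref{bound:multi}, and the generic degree of the restriction of $\st$ to that stratum.

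First I would combine Lemma \ref{spec} with a dimension count. Lemma \ref{spec} already restricts attention to strata whose source has exactly one non-trivial component over each twig $T_i$ of the target chain. A dimension count should further rule out any additional trivial covers, since contracting such a cover identifies its two adjacent nodes and produces a positive-dimensional fiber of $\st$ that would kill the pushforward. Lemma \ref{flatten} then places the remaining $k$-dimensional strata in bijection with tropical covers $\tilde\Gamma \in \cT^\fc_{r-k}(\mathbf{x})$, whose stabilization $f(\tilde\Gamma)$ is precisely the dual graph of the $\st$-image.

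Applying \eqref{bound:multi} to such a stratum, each internal edge of $\tilde\Gamma$ corresponds to exactly one node of the source with ramification order equal to the edge weight, so the product $\prod_{i,j} z_i^j$ collapses to $\varphi(\tilde\Gamma)$. For the generic degree of $\st$, Riemann--Hurwitz applied to the component $C_v$ at each vertex $v$ with $a_v$ preimages of $0$ and $b_v$ preimages of $\infty$ yields
\[
2d_v - 2 - (d_v - a_v) - (d_v - b_v) = a_v + b_v - 2 = \val(v) - 2
\]
simple ramification points on $C_v$ above the distinguished branch point $t_i$ of $T_i$. Exactly one of them carries the marking $t_v$; since $\st$ forgets this marking, each choice gives a distinct preimage of the same stabilized curve in $\mon$, contributing a factor $(\val(v) - 2)$ per vertex.

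Multiplying the two contributions produces the coefficient $\varphi(\tilde\Gamma) \prod_v (\val(v)-2)$ of $\Delta_{f(\tilde\Gamma)}$ in the first line of \eqref{eq:pomult}. The second equality follows immediately since both $\varphi$ and $\prod_v(\val(v)-2)$ depend only on the underlying unoriented tree, so summing over the fiber $f^{-1}(\Gamma)$ yields the uniform factor $|f^{-1}(\Gamma)| = m_\fc(\Gamma)$. The main obstacle will be a careful accounting of the automorphism factors $|\Aut(\mathbf{z}_i)|$ appearing in \eqref{bound:multi} against the symmetries introduced by the labeling convention on $\cT^\fc_{r-k}(\mathbf{x})$, ensuring that the combined coefficient is precisely the integral polynomial written above, without residual rational factors.
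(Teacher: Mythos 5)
There is a genuine gap in the first step. You claim that a dimension count rules out trivial covers because contracting one ``produces a positive-dimensional fiber of $\st$.'' This is false: a component of $X$ mapping trivially to a twig $T_i$ is a degree-$y$ cyclic cover totally ramified over the two attaching nodes, hence rigid (its moduli is a point with automorphism group of order $y$). Such components contribute nothing to the dimension of the stratum and do not enlarge the fibers of $\st$, so strata containing them push forward non-trivially. Worse, they cannot be avoided: whenever the dual tree $\tilde\Gamma$ has an edge joining two vertices that are not consecutive in the total order (the generic situation --- think of a star with its center ordered first), the corresponding chain of components of $X$ must pass over the intermediate twigs via trivial covers. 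Excluding these strata would delete most of the terms of the sum over $\cT^\fc_{r-k}(\mathbf{x})$ and yield a wrong formula.

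This omission also breaks your multiplicity computation. With trivial covers present, an edge of $\tilde\Gamma$ of weight $w$ spanning $j-i$ nodes of the target chain contributes the factor $z=w$ once for \emph{each} of those nodes in $\prod_{i,j} z_i^j$, so this product is $\prod_e w(e)^{\#\{\text{nodes crossed}\}}$, not $\varphi(\tilde\Gamma)$. The paper's proof keeps the trivial components, picks up an automorphism factor $1/y_i^j$ for each one in the adaptation of \eqref{bound:multi}, and observes that each such $y_i^j$ equals the weight of the edge on either side of the corresponding two-valent vertex of the pre-stable tropical dual; these denominators cancel exactly the excess $z$'s, leaving $\varphi(\Gamma)$. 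Your Riemann--Hurwitz count of $\val(v)-2$ marked-transposition choices per vertex, the triviality of the $|\Aut(\mathbf{z}_i)|$ factors (each sub-partition at a node between a fixed pair of rational components has length one), and the passage from the first to the second line of \eqref{eq:pomult} via $|f^{-1}(\Gamma)|=m_\fc(\Gamma)$ all match the paper and are fine; it is the treatment of the trivial components that must be repaired.
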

\begin{proof}
We observed that  for each  boundary stratum  $\Delta_{\Gamma}$ in $\mon$ there are precisely $ m_\fc(\Gamma)$ boundary strata in $\Htk$ pushing forward to it. It remains to show that each such stratum $[S]$ pushes forward with multiplicity $ \varphi(\Gamma)\prod_{v}(\val(v)-2)$.  We obtain this multiplicity by adapting formula \eqref{bound:multi} to the simple shape of the boundary strata we are observing. Over each $T_i$ in the chain there is only one connected component  of $X$ that maps non-trivially. Therefore we can replace the moduli space of possibly disconnected covers $\mathcal{M}^\bullet_i$ of \eqref{bound:multi} by the unique moduli space of non-trivial connected covers to $T_i$, call it $\mathcal{M}_i$, times an automorphism factor of $1/y_i^j$ for every connected component of degree $y_i^j$ mapping  trivially to $T_i$; we denote by $\triv_i$ the number of  trivial covers of the $i$-th component. Also the factors of $1/|\Aut(\mathbf{z_i})|$ need to be replaced by the automorphisms of each sub-partition of $\mathbf{z_i}$ corresponding to nodes on the same pair of curves on the two sides of the $i$-th node of $T^{r-k}$. But $X$ is a rational curve and hence all such sub-partitions have length one and trivially no automorphisms.  Hence \eqref{bound:multi} becomes:

\begin{equation}
\label{eq:popo}
[S]= \frac{ \prod_{i=1}^{r-k-1}\prod_{j=1}^{r_i} z_i^j
}{ \prod_{i=1}^{r-k}\prod_{j=1}^{{\triv}_i} y_i^j}
 \left[\gl_\ast\left( \prod_{i=1}^{r-k} \mathcal{M}_i \right)\right].
\end{equation}

Equation \eqref{eq:pomult} is deduced from \eqref{eq:popo} via the following observations:
\begin{enumerate}
\item There is a factor of  $y_i^j$ for each $2$-valent vertex  of the pre-stable tropical dual of $[S]$, and $y_i^j$ is equal to the weight of (either) edge on each side of the vertex. Therefore all $y_i^j$'s cancel with some of the $z$'s. The remaining multiplicity from the first part of \eqref{eq:popo} is therefore the product of weights of all internal edges of the tropical dual graph: by definition this is   $\varphi(\Gamma)$.
\item Each $\mathcal{M}_i$ is a moduli space of connected, rubber relative stable maps, with one simple transposition marked. By our discussion in Section \ref{sec:mscm}, we know  $\st$ maps  $\mathcal{M}_i$ onto $\mon$ with degree $r={{r}\choose{1}}$. If we call $v_i$ the vertex of the tropical dual graph corresponding to $\mathcal{M}_i$, then in this case $n= \val (v_i)$ and thus $r=\val (v_i)-2$.
\end{enumerate}
\end{proof}

To obtain Theorem \ref{thm:poly} it is now sufficient to remark that the weights of the edges of the tropical dual graph are linear homogeneous polynomials in the $x_i$'s and there are precisely $n-3-k$ internal edges in the tropical dual graph of any stratum  in $\Htk$ that pushes forward non-trivially to $\mon$.

We conclude this section by noting that our arguments do not apply exclusively to Hurwitz cycles, but to any cycle obtained by pull-pushing a boundary stratum from Losev-Manin to $\mon$. We thus obtain the following:

\begin{cor}
Let $\mathbf{x} \in \fc$  and $\overline\Delta$ be a union of boundary strata of dimension $k$ in $\LM$. Then $\st_\ast \br^\ast(\overline\Delta)$ is a homogeneous polynomial class of degree $n-3-k$ in $\fc$.
\end{cor}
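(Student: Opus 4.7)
The plan is to re-run the proof strategy of Theorem \ref{thm:poly} (realized via Lemmas \ref{spec}, \ref{flatten}, \ref{coeff}) in this slightly more general setting, observing that nothing in that argument uses the specific combinatorics of $T^{r-k}$—only that $T^{r-k}$ is a union of boundary strata of $\LM$ of fixed dimension.

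First, by linearity of $\st_\ast$ and $\br^\ast$, I would reduce to the case where $\overline\Delta \subset \LM$ is a single irreducible boundary stratum of dimension $k$. Such a stratum corresponds to an ordered partition $(S_1, \ldots, S_{r-k})$ of the light marked points, where $S_i$ records which transpositions live on the $i$-th component of the chain of projective lines. The preimage $\br^{-1}(\overline\Delta) \subset \overline{M}^\sim_0(\mathbf{x}, t_1, \ldots, t_r)$ is then a union of boundary components parameterizing relative stable maps to a chain $T_1 \cup \cdots \cup T_{r-k}$ with the transpositions $\{t_j : j \in S_i\}$ located on $T_i$. Lemma \ref{spec} applies verbatim: only components with exactly one non-trivial connected cover over each $T_i$ survive pushforward to $\mon$. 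The analog of Lemma \ref{flatten} identifies these surviving components with elements of $\cT^\fc_{r-k}(\mathbf{x})$, with the partition $(S_1, \ldots, S_{r-k})$ serving as extra labeling data on the internal vertices.

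Next, I would adapt the multiplicity computation of Lemma \ref{coeff}. The key factor $\varphi(\Gamma) = \prod (\text{internal edge weights})$ arises from the gluing multiplicities at the nodes together with the automorphism factors of the trivial covers in \eqref{bound:multi}, and is insensitive to the placement of the transpositions. The stabilization degree $(\val(v)-2)$ appearing in \eqref{eq:pomult}, which previously counted the ways to mark the single transposition $t_i$ among the $\val(v_i)-2$ simple branch points over $T_i$, is now replaced by the number of ways to mark $|S_i|$ distinguished transpositions among those $\val(v_i)-2$ branch points—a purely combinatorial factor independent of $\mathbf{x}$.

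Since the number of internal edges of any contributing tropical dual graph is dictated by the dimension count and equals $n-3-k$, the coefficient of each boundary class $\Delta_\Gamma$ is $\varphi(\Gamma)$ times a constant, hence a homogeneous polynomial of degree $n-3-k$ in the entries of $\mathbf{x}$ throughout the chamber $\fc$. There is no genuine obstacle: the only adaptation compared to Theorem \ref{thm:poly} is bookkeeping the modified stabilization factors, and these never interfere with the polynomial dependence on $\mathbf{x}$ since $\mathbf{x}$ enters the coefficients only through the edge weights.
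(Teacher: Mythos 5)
Your proposal is correct and takes essentially the same approach as the paper, which obtains this corollary by simply remarking that the arguments of Lemmas \ref{spec}, \ref{flatten} and \ref{coeff} apply to any union of boundary strata of $\LM$, not just to $T^{r-k}$. You have in fact supplied more detail than the paper does, correctly isolating the only required modification: the stabilization factor $\val(v)-2$ at each vertex becomes a different combinatorial constant depending on how the marked transpositions are distributed, which is independent of $\mathbf{x}$ and hence does not affect the homogeneous degree $n-3-k$ carried entirely by $\varphi(\Gamma)$.
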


\section{Tropical Hurwitz Loci}

\label{sec:thl}

\begin{defn}Let $\mathbf{x}\in \mathcal{H}$ and $r=n-2$. We define the $k$-dimensional tropical Hurwitz cycle as
$$\bbH^{\trop}_k(\mathbf{x}):= \ft_{\ast}\Big(\Psi_{n+1}\cdot \prod_{i=n+2}^{n+r-k} \big(\Psi_i \cdot \ev_i^{\ast}(p_i) \big) \cdot \calM_{0,r-k}(\TP,\mathbf{x})\Big) \subset \calM_{0,n},$$
where the $p_i$ are fixed arbitrary points in $\mathbb{R}\subset \TP$. (The notions of tropical moduli spaces, intersection theory and Psi-classes we use here are as recalled in Section \ref{sec:tg}.) We also define the Hurwitz locus:
$$\tilde{\bbH}^{\trop}_k(\mathbf{x}):= \Psi_{n+1}\cdot \prod_{i=n+2}^{n+r-k} \big(\Psi_i \cdot \ev_i^{\ast}(p_i) \big) \cdot \calM_{0,r-k}(\TP,\mathbf{x}).$$
\end{defn}

We describe the (general) tropical curves parameterized by $\bbH^{\trop}_k(\mathbf{x})\subset \calM_{0,n}$: if the $p_i$'s and  $0$ are distinct then the evaluation pullbacks force the contracted ends $n+1,\ldots,n+r-k$ to be adjacent to distinct vertices. Each Psi-class makes the corresponding contracted end adjacent to a $4$-valent vertex.  All contracted ends are forgotten in the push-forward. Thus the top-dimensional cells of  $\bbH^{\trop}_k(\mathbf{x})$ consist of  tropical curves that admit a cover of degree $\mathbf{x}$ to $\TP$ such that $r-k$ of its $r$ $3$-valent vertices are mapped to the points $p_i$ and $0$.

\begin{rem}
If $k=0$, all $r$ $3$-valent vertices are fixed. 
Since the evaluation maps at the contracted ends in the intersection of Psi-classes coincides with the tropical branch map (see \cite{CJM10}*{Definition 5.14}),
each point in the $0$-dimensional cycle $\bbH^{\trop}_0(\mathbf{x})$ comes with a multiplicity which equals the multiplicity of the tropical branch map. The degree of the $0$-dimensional cycle $\bbH^{\trop}_0(\mathbf{x})$ thus equals $|\Aut(\mathbf{x})|$ times the tropical Hurwitz number $H^0(\mathbf{x})$ as  defined in \cite{CJM10}. 
\end{rem}

\begin{rem}\label{rem:equivalent}
The cycle $\bbH^{\trop}_k(\mathbf{x})$ depends on the choice of the $p_i$.
 By \cite{AR07} however, different choices of $p_i$ yield rationally equivalent cycles. 
\end{rem}

\begin{lemma}\label{lem-evproduct}
 Let $\tilde{\Gamma}$ be a combinatorial type of a cover in  $\calM_{0,r-k}(\TP,\mathbf{x})$ with $r-k$ vertices of valence at least $4$ and with each contracted end $n+1,\ldots,n+r-k$ adjacent to one of the $r-k$ vertices. Then the evaluation map $\ev_{n+2}\times \ldots \times \ev_{n+r-k}$ in local coordinates of the cone of $\tilde{\Gamma}$ is a square matrix such that the absolute value of its determinant equals the product of the weights of the bounded edges of $\tilde{\Gamma}$.
\end{lemma}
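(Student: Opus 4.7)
The plan is to write the Jacobian matrix $M$ of $\ev_{n+2}\times\cdots\times\ev_{n+r-k}$, expressed in local coordinates given by the lengths of the bounded edges of $\tilde{\Gamma}$, as a product $M = A\cdot D$ where $D$ is diagonal with entries the edge weights and $A$ is a $\{0,\pm 1\}$-valued incidence-type matrix with $|\det A|=1$; the statement then drops out.

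First I would confirm squareness by a dimension count: $\tilde{\Gamma}$ has $n+r-k$ ends and $r-k$ internal vertices, so from $V-E=1$ for a tree there are exactly $r-k-1$ bounded edges, matching the $r-k-1$ nontrivial evaluation maps (the end $n+1$ is pinned to $0$ by Convention \ref{conv}, so $\ev_{n+1}$ is identically zero). I would then use Convention \ref{conv} to root the tree at the vertex $V_1$ carrying the end $n+1$ and label the remaining internal vertices $V_2,\ldots,V_{r-k}$ so that $V_{i+1}$ carries the end $n+1+i$. Integrating the slope $\pm w(e)$ along the unique path from $V_1$ to $V_{i+1}$ gives
\[
\ev_{n+1+i} \;=\; h(V_{i+1}) \;=\; \sum_{j} \sigma_{ij}\, w(e_j)\, \ell(e_j),
\]
where $\sigma_{ij}\in\{0,\pm 1\}$ records whether $e_j$ lies on the path from $V_1$ to $V_{i+1}$, and, if so, whether the direction of traversal agrees with the chosen orientation of $e_j$. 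Hence $M_{ij} = \sigma_{ij}\, w(e_j)$ and $|\det M| = \prod_j w(e_j)\cdot |\det(\sigma_{ij})|$.

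The core step is to show $|\det(\sigma_{ij})|=1$. Since the tree rooted at $V_1$ has $r-k-1$ non-root internal vertices and $r-k-1$ bounded edges, sending each bounded edge to its endpoint farther from $V_1$ defines a bijection, and I would relabel the edges so that $e_j$ has ``child'' $V_{j+1}$. Ordering the internal vertices $V_2,\ldots,V_{r-k}$ by any linear extension of the rooted-tree ancestor order (for instance by distance from $V_1$), the edge $e_j$ lies on the path $V_1\to V_{i+1}$ if and only if $V_{j+1}$ is an ancestor of, or equal to, $V_{i+1}$, which forces $j\leq i$. Thus $(\sigma_{ij})$ is lower triangular with $\pm 1$ entries on the diagonal, so $|\det(\sigma_{ij})|=1$ and $|\det M| = \prod_{j=1}^{r-k-1} w(e_j)$ as claimed.

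I do not expect a serious obstacle; the only delicate bookkeeping is matching the combinatorics of ``paths from $V_1$'' with the matrix indexing, and the child bijection between bounded edges and non-root internal vertices handles this cleanly.
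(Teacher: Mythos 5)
Your proof is correct and takes essentially the same route as the paper: order the non-root vertices and the bounded edges compatibly with the paths from the end $n+1$, so that the matrix becomes triangular with the edge weights (up to sign) on the diagonal. Your version just makes the bookkeeping explicit via the factorization $M=A\cdot D$ and the child bijection between bounded edges and non-root vertices, where the paper gives a two-line sketch citing \cite{CJM10}*{Lemma 5.26}.
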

\begin{proof}
 Inductively we can order the vertices which are not adjacent to $n+1$ and the bounded edges of $\tilde{\Gamma}$ so that going from end $n+1$ to another contracted end
 only bounded edges with a smaller index are traversed
 (see also \cite{CJM10}*{Lemma 5.26}). Then the evaluation map in local coordinates is a square triangular matrix such that the diagonal entries are the weights of the bounded edges of $\tilde{\Gamma}$.
\end{proof}

\begin{defn}\label{def:weights} Let $\mathbf{x}\in \mathcal{H}$, and
 $\Gamma$  a combinatorial type of a marked tropical curve in $\calM_{0,n}$. Associate to each end $i$ of $\Gamma$ the weight $x_i$. The weights of the other edges are determined by the balancing condition (see Lemma \ref{lem:flat}). Orient the edges of $\Gamma$ so that all weights are positive. The orientation of the edges induces a partial ordering of the vertices of $\Gamma$.
For any choice of $k$ vertices $V_1,\ldots,V_k$ of $\Gamma$ we denote by $m_{V_1,\ldots,V_k}(\Gamma)$ the number of ways to totally order the vertices different from $V_1,\ldots,V_k$ of $\Gamma$ respecting the partial order induced by the orientation (see also \cite{CJMwc}*{Definition 2.1 and 2.15}). (If $k=0$, this agrees with $m(\Gamma)$ from Definition \ref{mult}.) We call the vertices $V_1,\ldots,V_k$ which are not ordered the \emph{moving vertices}.

For a fixed choice of $k$ moving vertices, let $\Gamma'$ be a graph obtained from $\Gamma$ by shrinking edges in such a way that all moving vertices are identified with a non-moving vertex, and no pair of non-moving vertices is identified. We denote by $w_{V_1,\ldots,V_k}(\Gamma)$ the greatest common divisor, over all possibilities to choose $\Gamma'$, of the products of the weights of the bounded edges of $\Gamma'$.
\end{defn}

\begin{defn}
 For a combinatorial type $\tilde{\Gamma}$ of tropical covers and a vertex $V$ of $\Gamma$, we define the \emph{moving vector} of $V$ to be the vector in $\calM_{0,N}$ that we have to add to a tropical cover of type $\Gamma$ in order to move $h(V)$ to the right by an interval of legths $\prod w(e)$ where the product goes over all bounded edges $e$ adjacent to $V$ and $w(e)$ denotes their weight.
The moving vector  corresponds to a tree with nonzero lengths for the bounded edges of $\Gamma$ adjacent to $V$: each edge $e$ pointing out of $V$ has  length $-\prod_{e'\neq e}w(e')$, each edge pointing into $V$ has length $\prod_{e'\neq e}w(e')$; in both cases the product is over bounded edges $e'$ adjacent to $V$ and not equal to $e$ (see \cite{GMO}*{Definition 5.5}).
We also call the image of a moving vector in $\calM_{0,n}$ under the forgetful map $\ft$ a moving vector.
\end{defn}

\begin{lemma}\label{lem:cellsweights} Let $\Gamma$ be a combinatorial type of a top-dimensional cone of $\calM_{0,n}$.
If the points $p_i$ are chosen generically (i.e.\ pairwise distinct and different from $0$), there are $\sum m_{V_1,\ldots,V_k}(\Gamma)$ top-dimensional cells of  $\bbH^{\trop}_k(\mathbf{x})$ contained in the cone of $\Gamma$, where the sum goes over all $\binom{r}{k}$ choices of $k$ moving vertices $V_1,\ldots,V_k$ of $\Gamma$. For a fixed choice of $k$ moving vertices, each of the $m_{V_1,\ldots,V_k}(\Gamma)$ cells corresponding to this choice has weight $w_{V_1,\ldots,V_k}(\Gamma)$. Moreover, all cells corresponding to a fixed choice of $k$ moving vertices are parallel, i.e.\ their linear part is the same.
\end{lemma}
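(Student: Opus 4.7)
My plan is to analyze the top-dimensional cells of the lift $\tilde{\bbH}^{\trop}_k(\mathbf{x})$ inside $\calM_{0,r-k}(\TP,\mathbf{x})$ and then transport the information along the forgetful map $\ft$. On any such cell, each factor $\Psi_{n+i}$ forces the contracted end $n+i$ to be adjacent to a $4$-valent vertex of the cover $\tilde\Gamma$, while the evaluation conditions pin these vertices to the distinct prescribed positions $0, p_2, \ldots, p_{r-k}$. Since the forgetful image is a top-dimensional cone of $\calM_{0,n}$, the graph $\tilde\Gamma$ is necessarily obtained from $\Gamma$ by attaching the contracted ends $n+1, \ldots, n+r-k$ to $r-k$ of the $r$ trivalent vertices of $\Gamma$; the $k$ remaining vertices are precisely the moving vertices $V_1, \ldots, V_k$.

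For the count, fix a choice of moving vertices. By Lemma \ref{lem:flat}, the datum $\mathbf{x}$ endows $\Gamma$ with edge weights and orientations, and hence induces a partial order on the vertex set. A top-dimensional cell with the chosen moving vertices is then determined by a bijection between the non-moving vertices and the set of prescribed positions $\{0, p_2, \ldots, p_{r-k}\}$. Such a bijection yields a cell containing honest tropical covers (i.e.\ one with positive edge lengths) precisely when it is a linear extension of the partial order, and by definition there are exactly $m_{V_1, \ldots, V_k}(\Gamma)$ of these. Summing over the $\binom{r}{k}$ choices of moving vertices produces the claimed total.

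To compute the weight of each cell I would work in the cone of $\tilde\Gamma$ inside the Psi-intersected locus, which carries weight one by the Psi-class intersection formula of Section \ref{sec:tg}. Local coordinates on this cone are the $r-1$ bounded edge lengths, and the evaluation maps $\ev_{n+2}, \ldots, \ev_{n+r-k}$ assemble into an integer matrix $H$ of size $(r-k-1)\times(r-1)$. By Remark \ref{rem-intersect} the weight of the further intersection with $\prod \ev_{n+i}^*(p_i)$ equals the gcd of the maximal minors of $H$. A maximal minor picks a set $S$ of $r-k-1$ surviving edges, equivalently contracting the remaining $k$ edges to obtain a graph $\Gamma'$. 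If this contraction identifies two non-moving vertices then the corresponding rows of $H|_S$ coincide and the minor vanishes; otherwise each moving vertex has been absorbed into a distinct non-moving vertex, $\Gamma'$ is a valid shrinking in the sense of Definition \ref{def:weights}, and Lemma \ref{lem-evproduct} identifies the minor (up to sign) with the product of the weights of the bounded edges of $\Gamma'$. Taking the gcd over all valid shrinkings therefore recovers exactly $w_{V_1, \ldots, V_k}(\Gamma)$.

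For parallelness, note that the linear part of each cell is the kernel of $H$; because moving any single moving vertex leaves every Psi-position unchanged, this kernel is precisely the $k$-dimensional span of the moving vectors of $V_1, \ldots, V_k$, which depends only on the choice of moving vertices. Finally, $\ft$ restricts to a lattice isomorphism on the cone of $\tilde\Gamma$ (the bounded edges of $\tilde\Gamma$ correspond bijectively to those of $\Gamma$) and is injective on each cell because the positions of the contracted ends are pinned, so weights and parallelness descend unchanged to $\bbH^{\trop}_k(\mathbf{x})\subset\calM_{0,n}$. The step I expect to be the main technical obstacle is the careful matching of maximal minors of $H$ with valid shrinkings $\Gamma'$, and in particular the verification that minors corresponding to shrinkings which identify two non-moving vertices vanish.
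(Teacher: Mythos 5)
Your proposal is correct and follows essentially the same route as the paper: count cells by linear extensions of the partial order on non-moving vertices, compute weights as gcds of maximal minors of the evaluation matrix via Remark \ref{rem-intersect} and Lemma \ref{lem-evproduct} (with vanishing minors exactly when two non-moving vertices are identified), and obtain parallelness from the span of the moving vectors. The only point you gloss over is that the $\sum m_{V_1,\ldots,V_k}(\Gamma)$ distinct cells upstairs remain \emph{distinct} after pushing forward by $\ft$ --- injectivity of $\ft$ on each individual cell does not rule out two different cells having the same image; the paper checks this by observing that the images meet the boundary faces of the cone of $\Gamma$ in points whose coordinates (the edge lengths of the various $\Gamma'$) depend on which non-moving vertex is sent to which $p_i$.
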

\begin{proof}
 A cell of $\tilde{\bbH}^{\trop}_k(\mathbf{x})$ maps to the cone of $\Gamma$ in $\calM_{0,n}$ if we obtain the combinatorial type $\Gamma$ after forgetting the ends $n+1,\ldots,n+r-k$. 
Thus to recover such a cell from the graph $\Gamma$ one must to pick $r-k$ vertices to which we attach the marked ends $n+1,\ldots,n+r-k$ in such a way that we get a cover mapping the end $i$ to the point $p_i$ for $i=n+2,\ldots,n+r-k$ and end $n+1$ to $0$. One can do this if and only if the orientation of the edges 
respects the order of the image points $p_i$ of the $r-k$ vertices to which the mark ends are attached. 

The graphs $\Gamma'$ in Definition \ref{def:weights}
 correspond to faces of the boundary of the cone of $\Gamma$ that each cell corresponding to $V_1,\ldots,V_k$ intersects in a point. 
Thus two cells of $\tilde{\bbH}^{\trop}_k(\mathbf{x})$ corresponding to different choices of moving vertices are pushed forward to different cells of $\bbH^{\trop}_k(\mathbf{x})$ by $\ft$ --- their images under $\ft_\ast$ intersect different boundary cones of $\Gamma$ in points. But also two cells corresponding to the same choice of moving vertices are pushed forward to different cells: they intersect the same boundary cones of $\Gamma$, but they do so in different points as the coordinates of the intersection points are given by the lengths of the bounded edges of each $\Gamma'$, and these lengths depend on the choice of which of the $r-k$ non-moving vertices is mapped to which of the $p_i$.

It follows that there are $\sum m_{V_1,\ldots,V_k}(\Gamma)$ top-dimensional cells of  $\bbH^{\trop}_k(\mathbf{x})$ contained in the cone of $\Gamma$.

We now compute the weight of one such cell. We start by computing the weight of a cell in $\tilde{\bbH}^{\trop}_k(\mathbf{x})$. Each cell of $\prod_{i=n+1}^{n+r-k}\Psi_i$ corresponds to a combinatorial type where the ends $n+1,\ldots,n+r-k$ are adjacent to different $4$-valent vertices. The weight of such a cone in the intersection $\prod_{i=n+1}^{n+r-k}\Psi_i$ equals one by \cite{KM07}. 
By Remark \ref{rem-intersect} the intersection multiplicity from the evaluation pullbacks is the gcd of the absolute values of the maximal minors of the matrix of the evaluation map in local coordinates. 
We evaluate each of the $r-k-1$ non-moving vertices different from $n+1$ (which is sent to $0$ by convention). A maximal minor corresponds to a choice of $r-k-1$ bounded edges: we contract the remaining bounded edges and get a graph $\Gamma'$ with $r-k$ vertices.
 If some of the non-moving vertices  in $\Gamma'$  are identified, then the minor has some identical rows (resp.\ a zero row) and is thus zero. 
 All the $r-k$ non-moving vertices must thus remain separate; each of the moving vertices is identified with some non-moving vertex, as in the definition of $w_{V_1,\ldots,V_k}(\Gamma)$. 
Finally Lemma \ref{lem-evproduct} implies that the absolute value of the maximal minor corresponding to a possible choice of $\Gamma'$ equals the product of the weights of the bounded edges of $\Gamma'$. 
 The weight of a cell of $\tilde{\bbH}^{\trop}_k(\mathbf{x})$ equals $w_{V_1,\ldots,V_k}(\Gamma)$. The push forward maps cells of this intersection to cells of $\bbH^{\trop}_k(\mathbf{x})$ one by one, and the forgetful map has index one since it is just a projection. Thus the claim about the weight of cells of $\bbH^{\trop}_k(\mathbf{x})$ follows.

For any cell  $\tilde{\Gamma}\in \tilde{\bbH}^{\trop}_k(\mathbf{x})$ where $V_1,\ldots,V_k$ are the moving vertices, the linear part of the affine hull of the cell $\alpha$ is  spanned by the moving vectors of the vertices $V_1,\ldots,V_k$ in $\calM_{0,N}$. Thus all cells of $\bbH^{\trop}_k(\mathbf{x})$ in the cone of $\Gamma$ corresponding to a fixed choice of $k$ moving vertices are parallel.
\end{proof}

Lemma \ref{lem:cellsweights} implies that a (top-dimensional) cell of $\bbH^{\trop}_k(\mathbf{x})$ is specified by a (3-valent) graph $\Gamma$, a choice of $k$ moving vertices and an ordering of the remaining vertices that respects the edge orientations given by $\mathbf{x}$.
We call this data  the \emph{combinatorial type} of a cell of $\bbH^{\trop}_k(\mathbf{x})$.

\begin{rem}
 $\bbH^{\trop}_k(\mathbf{x})$ is not necessarily a simplicial polyhedral complex. Figure \ref{fig:notsimplic} shows a combinatorial type corresponding to a two-dimensional cell. The moving vertices are labelled with an $M$. This cell has four vertices, since the moving vertices can merge with 1 and 3, or 1 and 2, or 2 and 3, or both with 2.

\begin{figure}[tb]
\input{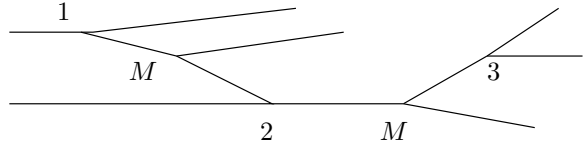}
\caption{The combinatorial type of a twodimensional cell with four vertices.}
\label{fig:notsimplic}
\end{figure}

\end{rem}

Since we define $\bbH^{\trop}_k(\mathbf{x})$ as a tropical intersection product, it is of course a tropical cycle itself and thus in particular balanced around each cell of codimension one. We find it useful to have a concrete local description of the balancing of tropical Hurwitz cycles. We conclude this section by discussing this.


The Hurwitz cycle $\bbH^{\trop}_k(\mathbf{x})$ has the structure of a weighted marked polyhedral complex (see \cite{GMO}*{Definition 5.1}): let $\sigma$ denote a top-dimensional cell of $\bbH^{\trop}_k(\mathbf{x})$ and $\tilde{\sigma}$ its unique preimage in $\tilde{\bbH}^{\trop}_k(\mathbf{x})$. Let $\tilde{\Gamma}$ be the combinatorial type of the cell $\tilde{\sigma}$. 
We denote by $\Gamma$ the image of $\tilde{\Gamma}$ under the forgetful map. A bounded edge of $\Gamma$ is called \emph{a fixed edge}, if the two end vertices of its preimage in $\tilde{\Gamma}$ are adjacent to two contracted ends (i.e.\ their image is fixed by the evaluation pullback). We call a bounded edge \emph{moveable}, if neither of the two end vertices of its preimage is adjacent to a contracted end.
We define the weight factor $\omega'(\sigma)$ to be the product of the weights of all fixed edges divided by the product of the weights of moveable edges.
The $k$ moving vectors of the moving vertices are the marked vectors of $\sigma$. 

We equipped the polyhedral complex underlying $\bbH^{\trop}_k(\mathbf{x})$ with two a priori different weight functions for its top-dimensional cells: one weight function is defined by the tropical intersection product, the other by the structure of a marked polyhedral complex. It can be shown that these two structures agree. Assuming this, we now check the balancing condition (as in \cite{GMO}*{Lemma 5.2}) locally around a codimension one face.

A cell $\tau$ of codimension one of $\bbH^{\trop}_k(\mathbf{x})$ 
parameterizes trees with exactly one $4$-valent vertex. There are two types of cells of codimension one: those where two moving vertices have merged into a $4$-valent vertex, and those where a moving vertex has merged with a non-moving vertex.

When two moving vertices have merged, there are three top-dimensional neighbors of $\tau$, corresponding to the three resolutions of the $4$-valent vertex.

The three neighboring cones  share $k-2$ marked vectors: the remaining two marked vectors  (for each cone) correspond to the moving vectors of the  new $3$-valent vertices obtained resolving the $4$-valent vertex. 
In all three cases we can replace one of the two new moving vectors by the moving vector of the $4$-valent vertex of $\tau$. Now the hypothesis of Proposition 5.2 in \cite{GMO} apply, and we can show just as in Proposition 5.8 of \cite{GMO} that the balancing condition at $\tau$ is equivalent to  the appropriate weighted sum of the three other new moving vectors equal to the moving vector of the $4$-valent vertex. 

Assume now that a moving vertex is merged into a non-moving vertex to form the $4$-valent vertex $V$ of $\tau$.
Without restriction, assume that one bounded edge of weight $a$ points into $V$ and three bounded edges of weights $b$, $c$ and $d$ point out of $V$.
 Then there are six top-dimensional neighbors of $\tau$, illustrated in Figure \ref{fig:neighbours}.
All but one of the marked vectors coincide for the six top-dimensional neighbors, they only differ by the moving vector of the new vertex. The moving vertex is indicated with an $M$.
The weight factors of the six neighboring cells differ only by two of the four bounded edges adjacent to $V$ which are fixed edges according to where the new moving vertex is.

\begin{figure}[tb]
\input{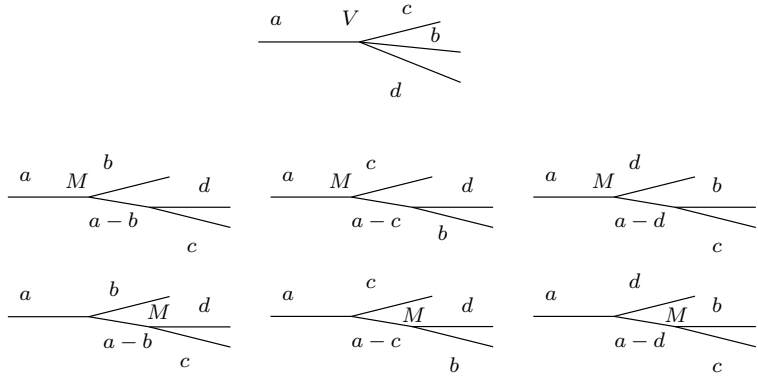}
\caption{The six neighbors of a cell of codimension one where a moving vertex is merged with a non-moving vertex.}
\label{fig:neighbours}
\end{figure}

We apply Proposition 5.2 of \cite{GMO} and deduce that the balancing condition around $\tau$ is equivalent to the equation: 
\begin{align*}
&ab\cdot (cd\cdot v_{ab}- (a-b)d \cdot v_c - (a-b) c \cdot v_d)\\
&-  cd\cdot (-ab\cdot v_{ab}+ (a-b)b \cdot v_a - (a-b) a \cdot v_b)\\
&+ac\cdot (bd\cdot v_{ac}- (a-c)d \cdot v_b - (a-c) b \cdot v_d)\\
&-bd\cdot (-ac\cdot v_{ac}+ (a-c)c \cdot v_a - (a-c) a \cdot v_c)\\
&+ad\cdot (bc\cdot v_{ad}- (a-d)c \cdot v_b - (a-d) b \cdot v_c)\\
&-bc\cdot (-ad\cdot v_{ad}+ (a-d)d \cdot v_a - (a-d) a \cdot v_d)
\\&= 2abcd\cdot (v_{ab}+v_{ac}+v_{bc}-(v_a+v_b+v_c+v_{abc}))=0,
\end{align*}
where we use the equation $a=b+c+d$ and relations among the vectors $v_I$ as in \cite{KM07}*{Lemma 2.6} (the vector $v_a$ here stands for the tree with all ends that we can reach from $V$ via $a$ on one side).

\subsection{Extended Example: Hurwitz Curves}
We study the tropical structure of the Hurwitz curve $\bbH^{\trop}_1(\mathbf{x})$. Top dimensional cells (segments) parameterize graphs with one moving vertex.
Each codimension one cell --- i.e.\ vertex --- of the Hurwitz curve
corresponds to curves where the moving vertex is merged with another (fixed) vertex, and has six top-dimensional cells incident to it. For any $\mathbf{x}$, the Hurwitz curve is a six-valent tropical curve in  $\calM_{0,n}$. We  distinguish several types of edges and compute their weights $w_{V_1}(\Gamma)$.

\begin{figure}[tb]
\input{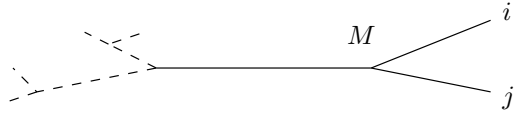}
\caption{The combinatorial type of a constant end of the Hurwitz curve.}
\label{fig:easyend}
\end{figure}

\emph{Constant Ends} (Figure \ref{fig:easyend}) of the Hurwitz curve are edges corresponding to a combinatorial type where the moving vertex is adjacent to two ends $i$ and $j$. The moving vector equals $\pm v_{ij}$. 
A constant end is incident to a unique vertex, corresponding to shrinking the only bounded edge adjacent to the moving vertex. 
The weight of a constant end equals the product of all bounded edge weights except for the bounded edge adjacent to the moving vertex. 


\begin{figure}[tb]
\input{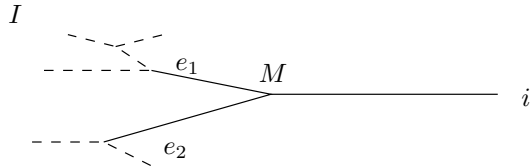}
\caption{The combinatorial type of a linear end of the Hurwitz curve.}
\label{fig:complicatedend}
\end{figure}

\emph{Linear Ends} (Figure \ref{fig:complicatedend}):
 the moving vertex is adjacent to two bounded edges $e_1$ and $e_2$ of the same direction and one end $i$, necessarily of opposite direction. The moving vector of a linear end equals $\pm( w(e_2) v_{I}+w(e_1) v_{I\cup \{i\}})$, where $I$ denotes the set of ends separated from $i$ via $e_1$.
 A linear end is incident to only one $0$-dimensional cell, corresponding to the graph where the moving vertex coincides with the vertex of the shorter of the two edges (the end vertex of $e_1$ in the picture). To obtain possible graphs $\Gamma'$, we can shrink either $e_1$ or $e_2$. Thus the weight of a linear end equals $\prod_{e\neq e_1,e_2} w(e)\cdot \mbox{gcd}(w(e_1),w(e_2))$ where the product goes over all bounded edges $e$ except $e_1$ and $e_2$.

\begin{figure}[tb]
\input{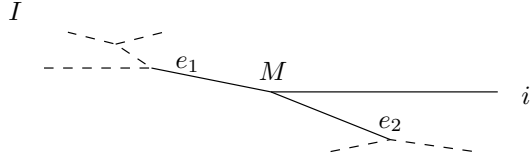}
\caption{The combinatorial type of a linear edge of the Hurwitz curve.}
\label{fig:easybounded}
\end{figure}

\emph{Linear Edges} (Figure \ref{fig:easybounded}) are edges corresponding to a combinatorial type where the moving vertex is adjacent to two bounded edges $e_1$ and $e_2$ of opposite direction, and to an end $i$. The moving vector of an linear edge equals $\pm( w(e_2) v_{I}- w(e_1) v_{I\cup \{i\}})$ where $I$ denotes the set of ends separated from $i$ via $e_1$. We reach the two vertices of a linear edge by shrinking either $e_1$ or $e_2$. The weight of a linear edge is exactly as for a linear end: $\prod_{e\neq e_1,e_2} w(e)\cdot \mbox{gcd}(w(e_1),w(e_2))$.

\begin{figure}[tb]
\input{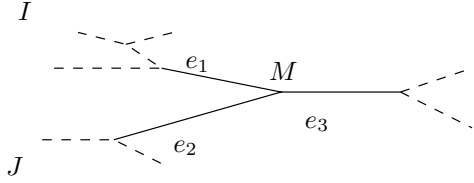}
\caption{The combinatorial type of a quadratic edge of the Hurwitz curve.}
\label{fig:complicatedbounded}
\end{figure}

\emph{Quadratic Edges} (Figure \ref{fig:complicatedbounded}) are edges corresponding to a combinatorial type where the moving vertex is adjacent to three bounded edges $e_1$, $e_2$ and $e_3$. Without restriction we can assume that $e_1$ and $e_2$ are of the same direction, and $e_3$ is of opposite direction. We can also assume that the non-moving vertices are ordered such that the other end vertex of $e_1$ is between the end vertex of $e_2$ and the moving vertex. A quadratic edge connects the two vertices where we shrink $e_3$ resp.\ $e_1$.
 The moving vector of a quadratic edge equals $\pm( w(e_2)\cdot w(e_3) v_{I}+ w(e_1)w(e_3) v_{J}- w(e_1)w(e_2) v_{(I\cup J)^c})$ where $I$ denotes the set of ends that we can reach from the moving vertex via $e_1$ and $J$ denotes the set of ends that we can reach via $e_2$. 
We can shrink each of the three adjacent edges to obtain a possible graph $\Gamma'$ to compute the weight. Thus the weight of a complicated bounded edge equals $\prod_{e\neq e_1,e_2,e_3} w(e)\cdot \mbox{gcd}(w(e_1)w(e_2),w(e_1)w(e_3),w(e_2)w(e_3))$.

\begin{rem}
The weight of a one-dimensional cell $H$ in the Hurwitz curve is a function of the weights of the bounded edges of a curve $\tilde{\Gamma}$ parameterized by the cell. If we artificially define the weight of ends of  $\tilde{\Gamma}$ to be $1$ (these are not the right tropical weights!) then we can describe the weight of $H$ in a uniform way as the product of weights of all edges of  $\tilde{\Gamma}$ that are not incident to the moving vertex times the $gcd$ of the three products of  weights of pairs of edges incident to the moving vertex. Given that  weights of edges  of  $\tilde{\Gamma}$ are either $1$ or linear polynomials, the degree of the above $gcd$ can range from $0$ to $2$, thus explaining the names we chose above.\end{rem}

\begin{example}\label{ex:m05hurwitzcurve}
 We compute the Hurwitz curve $\bbH^{\trop}_1(\mathbf{x})$ for $\mathbf{x}=(x_1,
\ldots,x_5)$ with $x_1, x_2>0$, $x_3,x_4,x_5<0$, $x_1>|x_i+x_j|$ for all $i\not=j\in\{3,4,5\}$ and $x_2<-x_i$ for all $i=3,\ldots,5$ in $\calM_{0,5}$.
We start with the cells in the cone spanned by $v_{12}$ and $v_{34}$ in $\calM_{0,5}$. To fix a combinatorial type of covers such that the corresponding cell of $\bbH^{\trop}_1(\mathbf{x})$ lives in this cone, we first have to choose a moving vertex among the three vertices of the tree with ends $1$ and $2$ coming together and ends $3$ and $4$ coming together. For each of these three choices, there is one ordering of the remaining vertices which is compatible with the orientation of the edges (see Figure \ref{fig:m05}). We thus have three cells of $\bbH^{\trop}_1(\mathbf{x})$ in this cone, two of these cells are constant ends, the other one is an linear edge. Figure \ref{fig:m052} shows the cone and the three cells.
By symmetry, the cones spanned by $v_{12}$ and $v_{35}$ resp.\ by $v_{12}$ and $v_{45}$ look analogous. Similar arguments also show that all remaining cones except the cones spanned by $v_{23}$ and $v_{45}$, resp.\  $v_{24}$ and $v_{35}$, resp.\  $v_{25}$ and $v_{35}$, look alike.

\end{example}
\begin{figure}[tb]
\input{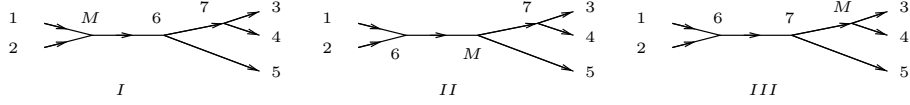}
\caption{The combinatorial types of the Hurwitz curve in the cone of $\calM_{0,5}$ spanned by $v_{12}$ and $v_{34}$.}
\label{fig:m05}
\end{figure}

\begin{figure}[tb]
\input{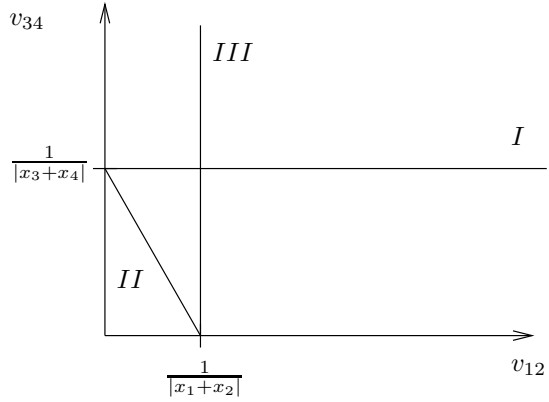}
\caption{The Hurwitz curve in the cone of $\calM_{0,5}$ spanned by $v_{12}$ and $v_{34}$.  We require the vertex $6$ to be mapped to $0$ as usual and $7$ to $1$.}
\label{fig:m052}
\end{figure}

In the cone spanned by $v_{23}$ and $v_{45}$, we have two constant ends when the moving vertex is adjacent to two ends. If the third vertex is moving, there are two orderings of the remaining vertices compatible with the orientation of the edges (see Figure \ref{fig:m053}). Altogether, we get two constant and two linear ends as depicted in Figure \ref{fig:m054}.

\begin{figure}[tb]
\input{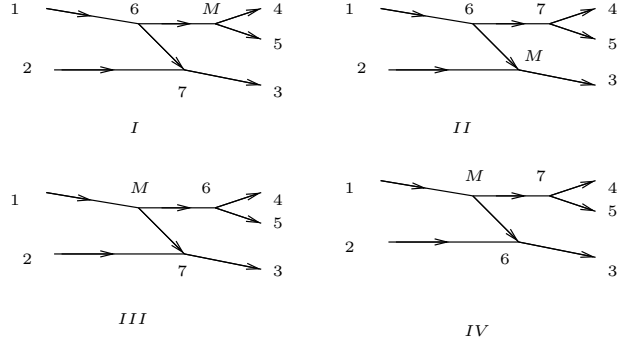}
\caption{The combinatorial types of the Hurwitz curve in the cone of $\calM_{0,5}$ spanned by $v_{23}$ and $v_{45}$.}
\label{fig:m053}
\end{figure}

\begin{figure}[tb]
\input{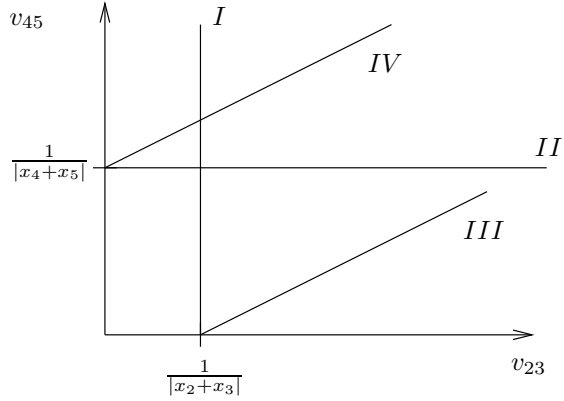}
\caption{The Hurwitz curve in the cone of $\calM_{0,5}$ spanned by $v_{23}$ and $v_{45}$.}
\label{fig:m054}
\end{figure}

\section{Tropical-Classical Correspondence}
\label{sec:tcc}

Our work in Sections \ref{sec:hl} and \ref{sec:thl} has highlighted a combinatorial correspondence between  classical and tropical Hurwitz cycles. In this section we make a precise statement of such a correspondence, and illustrate it in the example of one dimensional cycles. A satisfactory correspondence should also encode the polynomial multiplicities of strata. In Section \ref{sec:int} we interpret the (classical) multiplicity of a stratum in the Hurwitz cycle as an intersection multiplicity of the corresponding tropical face with the  $k$-dimensional skeleton of $\calM_{0,n}$.
\begin{co}
\label{co:tropclas}
There is a natural bijection between $i$-dimensional faces of $\bbH^{\trop}_k(\mathbf{x})$ and connected components in  $\tilde\bbH_k(\mathbf{x})$ of the inverse image via $\st$ of irreducible  strata in $\bbH_k(\mathbf{x})$ of dimension $k-i$. Further, incidence of faces on the tropical side corresponds to intersection of strata on the classical side.  
\end{co}

The key ingredient here is the correspondence between tropical graphs and boundary strata of moduli spaces of relative stable maps outlined in Lemma \ref{flatten}. The subtlety to observe is that a cell of the tropical Hurwitz cycle is sensitive to the type of the directed graph parameterized and to the ordering of the fixed vertices, but not to the ordering of moving vertices amongst themselves or with respect to the fixed ones. So two general points  $P_1,P_2$ in the same $i$-dimensional tropical cell may correspond to graphs where two adjacent vertices (at least one of which is moving) have switched order, and hence to different $(k-i)$-dimensional boundary strata $\tilde{\Delta}_1, \tilde{\Delta}_2$ of relative stable maps. The segment joining $P_1$ and $P_2$ contains a point $P$ where the two incriminated vertices map to the same image point: this corresponds to a $(k-i+1)$-dimensional boundary stratum $\Delta$ that contains $\tilde{\Delta}_1$ and $\tilde{\Delta}_2$ as specializations. The stratum $\Delta$ does not belong to $\tilde{\mathbb{H}}_k(\mathbf{x})$, but it does belong to the inverse image via $\st$ of the Hurwitz cycle.  We feel that describing this phenomenon in full generality would only mire us in notational confusion, so we choose  to illustrate it in one very specific example.

\subsection{Hurwitz Curves Continued}

Consider the one dimensional cell labelled $I$ in Figure \ref{fig:m054}, and the corresponding graphs parameterized by points in such cell (these are illustrated in Figure \ref{fig:m053}). Let $\ell$ be a coordinate for this cell, corresponding to the length of the segment joining vertices $6$ and $M$. For $\ell < -\frac{1}{x_4+x_5}$ (resp. $\ell > -\frac{1}{x_4+x_5}$) any point of $I$ is the tropical dual graph to the stratum $\tilde{\Delta}_1$ (resp. $\tilde{\Delta}_2$) as depicted in Figure \ref{fig:contract}. The point $\ell = -\frac{1}{x_4+x_5}$ correspond to the stratum $\tilde{\Delta}$, which is a $\proj$ connecting $\tilde{\Delta}_1$ and  $\tilde{\Delta}_2$ in $\st^{-1}({\mathbb{H}_1}(\mathbf{x}))$.

\begin{figure}[tb]
\input{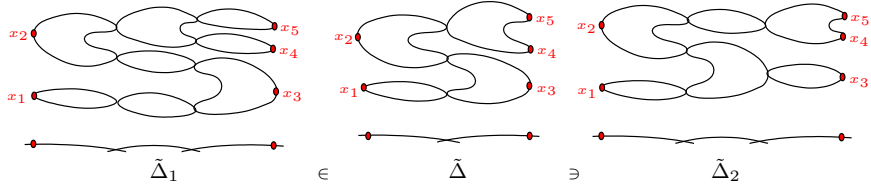}
\caption{Strata of relative stable maps in $\st^{-1}({\mathbb{H}_1}(\mathbf{x}))$.}
\label{fig:contract}
\end{figure}

\subsection{The Hurwitz cycle 
intersecting the codimension $k$-skeleton of $\calM_{0,n}$}

\label{sec:int}



In this section we realise the (classical) multiplicities of the strata in the Hurwitz cycle as intersection numbers of the tropical Hurwitz cycle with the codimension  $k$-skeleton of $\calM_{0,n}$. To do so we must  view each codimension $k$ cell as a part of an intersection product of divisors. 
We recall the boundary divisors $D_I$  that ``play well'' in the intersection theory of  $ \calM_{0,n}$ are defined as divisors of appropriate rational functions \cite{Rau08}*{Definition 2.4}. Each $D_I$ is a linear combination of codimension one cells of $ \calM_{0,n}$ with appropriate weights. In the following lemma we describe some intersection products of these boundary divisors in term of the tropical curves they parameterize.
\begin{lemma}\label{lem:divintersect}
The intersection of the tropical boundary divisors $ D_{12}\cdot D_{123}\cdot  \ldots \cdot D_{1...j}$ for some $j\geq 2$ in $\calM_{0,m}$ consists of all cones corresponding to a type with the ends $1,\ldots,j$ at
\begin{itemize}
 \item  a $j+2$-valent vertex and only $3$-valent vertices otherwise with weight one,
\item  a $j+1$-valent vertex adjacent to a $4$-valent vertex and only $3$-valent vertices otherwise with weight $-1$.
\end{itemize}
\end{lemma}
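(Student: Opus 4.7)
Plan: I would prove this lemma by induction on $j$, repeatedly applying the standard weight formula for a Cartier divisor
\[
\omega_{\varphi\cdot X}(\tau) \;=\; \sum_{\sigma\supset\tau}\omega_X(\sigma)\,\varphi^\sigma(u_\sigma) \;-\; \varphi^\tau\Bigl(\sum_{\sigma\supset\tau}\omega_X(\sigma)\,u_\sigma\Bigr),
\]
where $X$ is the tropical cycle being intersected, the $u_\sigma$ are lifts of primitive generators of the top cones $\sigma\supset\tau$ in $X$ modulo $N_\tau$, and the weighted sum $\sum_\sigma\omega_X(\sigma)u_\sigma$ lies in $N_\tau$ by balancing of $X$.

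For the base case $j=2$, I would apply this formula with $X=\calM_{0,m}$ and $\varphi=\varphi_{12}$. A codimension-one cell $\tau$ corresponds to a tree with a single $4$-valent vertex $V$, and the three resolutions at $V$ yield three top cones $\sigma_1,\sigma_2,\sigma_3\supset\tau$ with new rays $u_1,u_2,u_3$. The Plücker-type relations among the generators $v_I$ in tropical $\calM_{0,m}$ make the balancing term $\sum u_k \bmod N_\tau$ explicit. A case split then yields the three possibilities: (i) if ends $1$ and $2$ are attached to $V$ as separate edges, exactly one resolution produces the ray $v_{12}$ with $\varphi_{12}^\sigma(v_{12})=1$ and the linear extension $\varphi_{12}^\tau$ vanishes, giving weight $+1$; (ii) if $\tau$ already carries the $\{1,2\}$-edge and $V$ lies on the $\{3,\ldots,m\}$ side, all three $\varphi_{12}^\sigma(u_\sigma)$ vanish but the balancing relation forces $\sum u_\sigma$ to be a positive multiple of $v_{12}$, contributing weight $-1$ through the subtracted term; (iii) in all other configurations, both contributions vanish and weight is $0$.

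For the inductive step, I would set $\alpha:=D_{12}\cdots D_{1\ldots,j-1}$ (known by the inductive hypothesis to be supported on type-(A) and type-(B) cells with weights $\pm 1$) and compute $\varphi_{1\ldots,j}\cdot\alpha$. First I would observe that on each top cell $\sigma$ of $\alpha$, the restriction $\varphi_{1\ldots,j}|_\sigma$ is linear and vanishes unless $\sigma$ has a bounded edge with split $\{1,\ldots,j\}\,|\,\{j+1,\ldots,m\}$; a direct inspection of the two top-cell types shows that this happens precisely on those type-(A) cells where end $j$ is one of the two non-$\{1,\ldots,j-1\}$ edges incident to the central $(j+1)$-valent vertex (in which case $\varphi_{1\ldots,j}$ is the coordinate for the bounded edge emanating from the vertex to the subtree on $\{j+1,\ldots,m\}$). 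Applying the local weight formula to each codimension-one face $\tau$ of $\alpha$, I would identify two non-cancelling contributions: contracting the distinguished bounded edge of such an (A)-subcase top cell merges its $(j+1)$-valent vertex with an adjacent $3$-valent root, producing a type-(A$''$) cell (ends $1,\ldots,j$ at a $(j+2)$-valent vertex) with weight $+1$ coming from the sum term; contracting an edge at the root of the adjacent subtree produces a type-(B$''$) cell (ends $1,\ldots,j$ at a $(j+1)$-valent vertex adjacent to a $4$-valent vertex) with weight $-1$ coming from $\varphi_{1\ldots,j}^\tau$ applied to the balancing sum, exactly as in case (ii) of the base.

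The main obstacle will be verifying that all other codimension-one faces of $\alpha$'s top cells (corresponding to deeper contractions inside the $\{j+1,\ldots,m\}$-subtree, or to contractions of type-(B) top cells of $\alpha$) contribute weight $0$. I expect this to follow from pairwise cancellations: each such face is shared between top cells of $\alpha$ of opposite sign (a type-(A) cell at weight $+1$ and a type-(B) cell at weight $-1$) whose $\varphi_{1\ldots,j}$-contributions coincide, so that the signed sum in the weight formula vanishes. Tracking these cancellations rigorously --- using the balancing of $\alpha$ together with the Plücker relations that identified $\sum u_\sigma \bmod N_\tau$ in the base case --- is the combinatorial heart of the argument and will require careful bookkeeping of the moving-vertex structures inherited from the earlier sections.
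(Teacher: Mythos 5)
Your overall strategy is the same as the paper's: induct on $j$, list the codimension-one faces of $D_{12}\cdots D_{1\ldots j-1}$, and evaluate the local weight formula for $\varphi_{1\ldots j}$ at each of them. Your base case and your two surviving contributions are correct and match the paper's computation: the $+1$ comes from the unique resolution of the $(j{+}2)$-valent vertex that creates the ray $v_{1\ldots j}$, and the $-1$ comes from $-\varphi_{1\ldots j}^\tau$ applied to the balancing sum $v_{A_1\cup A_2}+v_{A_1\cup A_3}+v_{A_2\cup A_3}=v_{A_1}+v_{A_2}+v_{A_3}+v_{A_4}$ of the three resolutions of the $4$-valent vertex, when that vertex is adjacent to the $(j{+}1)$-valent vertex carrying $1,\ldots,j$.

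The one place where your plan goes astray is the step you defer as ``the combinatorial heart'': the vanishing of all other contributions. You propose to obtain it from pairwise cancellations between adjacent type-(A) cells of weight $+1$ and type-(B) cells of weight $-1$ whose $\varphi_{1\ldots j}$-contributions coincide. That mechanism is not available and is not needed. For instance, a codimension-one face with ends $1,\ldots,j-1$ at a $j$-valent vertex adjacent to a $5$-valent vertex is adjacent \emph{only} to type-(B) cells (contraction can only raise valences, so no type-(A) cell degenerates to it), so there are no opposite-sign pairs to cancel. The actual reason every remaining face gets weight zero is direct and term-by-term: since $\varphi_{1\ldots j}$ is supported on the single ray $v_{1\ldots j}$, the first term of the weight formula vanishes unless some resolution creates an edge splitting off exactly $\{1,\ldots,j\}$, and the second term vanishes unless $v_{1\ldots j}$ is already a ray of the face and appears in the balancing sum; a short check of the four face types shows these conditions hold only in the two configurations you already identified. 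So your proof is salvageable as written, but the deferred bookkeeping should be replaced by this direct vanishing argument rather than a search for cancelling pairs, which would not terminate successfully.
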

\begin{proof}
We show this by induction. The induction beginning is \cite{Rau08}*{Lemma 2.5}. For the induction step, assume that the statement is true for $j-1$. 
The codimension $j$ cones in  $ D_{12}\cdot D_{123}\cdot  \ldots \cdot D_{1...j-1}$ then consist of all cones corresponding to a type with the ends $1,\ldots,j-1$ at
\begin{itemize}
 \item a $j+2$-valent vertex and only $3$-valent vertices otherwise,
\item  a $j+1$-valent vertex and one $4$-valent vertex,
\item  a $j$-valent vertex adjacent to a $5$-valent vertex,
\item  a $j$-valent vertex adjacent to a $4$-valent vertex and one other $4$-valent vertex.
\end{itemize}
To obtain the coefficients of such cones in the product   $ D_{12}\cdot D_{123}\cdot  \ldots \cdot D_{1...j}$ we must compute the intersection with $\varphi_{1...j}$ around each of these cones.

In the first case, the neighbors of  $ D_{12}\cdot D_{123}\cdot  \ldots \cdot D_{1...j-1}$ are the resolutions of the $j+2$-valent vertex in a $j+1$-valent vertex with $1,\ldots,j-1$ adjacent. Each such neighbor is spanned by a vector corresponding to this resolution. Since only the vector $v_{1...j}$ is mapped to one by $\varphi_{1...j}$, we get a contribution of one for the weight of the codimension one cone only if also $j$ is adjacent to the $j+2$-valent vertex. The weight then equals one.

In the second case, we can resolve the $j+1$-valent vertex, but none of the resolutions is spanned by the vector $v_{1..j}$. We can also resolve the $4$-valent vertex. Again, none of the resolutions is spanned by the vector $v_{1...j}$, however, this vector is contained in the codimension one cone if also $j$ is adjacent to the $j+1$-valent vertex. If in addition the $4$-valent vertex is adjacent to the $j+1$-valent vertex, the sum of the vectors spanning its three resolutions contains $v_{1...j}$ as a summand: if we denote the four edges adjacent to the $4$-valent vertex by $e_1,\ldots,e_4$ and assume that the subset of ends that can be reached via $e_i$ from the $4$-valent vertex is $A_i$, then the three resolutions are spanned by $v_{A_1\cup A_2}$, $v_{A_1\cup A_3}$ and $v_{A_2\cup A_3}$ and their sum satisfies $v_{A_1\cup A_2}+v_{A_1\cup A_3}+v_{A_2\cup A_3}=v_{A_1}+v_{A_2}+v_{A_3}+v_{A_1\cup A_2 \cup A_3}= v_{A_1}+v_{A_2}+v_{A_3}+v_{A_4}$ by \cite{KM07}*{Lemma 2.6}. This yields a contribution of minus one for the weight of this cone.

In the third and fourth case, neither the codimension one cone itself nor any of its neighbors contains the vector $v_{1...j}$. Therefore it gets weight zero. 
The claim follows.
\end{proof}

\begin{rem}\label{rem:coneintersect}
The important consequence of this lemma is the statement that a cone with a $j+2$-valent vertex adjacent to the ends $1,\ldots,j$ appears with weight one in the intersection $ D_{12}\cdot D_{123}\cdot  \ldots \cdot D_{1...j}$. It is straight-forward to generalize this statement to a cone $C$ with an arbitrary $j+2$-valent vertex $V$, adjacent to the edges $e_1,\ldots,e_{j+2}$.
We denote by $A_i$ the subset of ends that can be reached from $V$ via $e_i$. Then $C$ appears with weight one in the intersection $ D_{A_1\cup A_2}\cdot D_{A_1\cup A_2 \cup A_3}\cdot  \ldots \cdot D_{A_1\cup \ldots\cup A_{j-1}}$.
Also, to cut out a cone  with several higher-valent vertices, we can combine several such intersection products.
\end{rem}

\begin{lemma}\label{lem:pullbackintersect}
 The intersection $\Psi_\alpha\cdot \ft_\alpha^\ast( D_{12})\cdot\ft_\alpha^\ast( D_{123})\cdot   \ldots \cdot \ft_\alpha^\ast(D_{1...j})$ for some $j\geq 2$ in $\calM_{0,m+1}$ consists of all cones corresponding to a type with the ends $1,\ldots,j$ 
\begin{itemize}
 \item and $\alpha$ at a $j+3$-valent vertex with weight $j$, 
\item at a $j+2$-valent vertex, and $\alpha$ adjacent to some other vertex with weight $1$,
\item and $\alpha$ at a $j+2$-valent vertex adjacent to a $4$-valent vertex with weight $-(j-1)$,
\item at a $j+1$-valent vertex adjacent to a $5$-valent vertex with $\alpha$ with weight $-2$,
\item at a $j+1$-valent vertex adjacent to a $4$-valent vertex and $\alpha$ adjacent to some other vertex with weight $-1$.
\end{itemize}
\end{lemma}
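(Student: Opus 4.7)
First, by the commutativity of rational-function-based intersection and pullback along the forgetful morphism, we have
$$\Psi_\alpha\cdot\ft_\alpha^\ast(D_{12})\cdots\ft_\alpha^\ast(D_{1\ldots j}) \;=\; \Psi_\alpha\cdot\ft_\alpha^\ast(E_j),$$
where $E_j:=D_{12}\cdots D_{1\ldots j}$ is the codimension-$(j-1)$ cycle in $\calM_{0,m}$ described by Lemma \ref{lem:divintersect}. The pullback $\ft_\alpha^\ast(E_j)\subset\calM_{0,m+1}$ is again of codimension $j-1$, and its top-dimensional cones are exactly the combinatorial types $\Gamma'$ obtained from the ``Type I'' ($(j+2)$-valent vertex, weight $+1$) or ``Type II'' ($(j+1)$-valent adjacent to $4$-valent, weight $-1$) cones of $E_j$ by attaching the end $\alpha$ at a new $3$-valent vertex, that is, by subdividing either a bounded edge or an end-edge of the image $\ft_\alpha(\Gamma')$. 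Each such top-dimensional cone inherits the weight of its image in $E_j$.

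Next, I would intersect with $\Psi_\alpha$ by a local computation at each codimension-$j$ cone $\sigma$ of $\calM_{0,m+1}$. The five combinatorial types listed in the statement are precisely the codimension-$j$ types that admit at least one resolution into a top-dimensional cone of $\ft_\alpha^\ast(E_j)$; any other codimension-$j$ cone gives trivial local contribution. For each of the five types I would enumerate the codimension-$(j-1)$ cones of $\ft_\alpha^\ast(E_j)$ containing $\sigma$ as a face (classified by which higher-valent vertex of $\sigma$ gets resolved, and how), identify the rays $v_I$ generating them modulo $\sigma$, and extract the local multiplicity of $\Psi_\alpha$ via a local rational-function representative together with the $v_I$-relations of \cite{KM07}*{Lemma 2.6}, in the style of the induction step of the proof of Lemma \ref{lem:divintersect}.

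The main obstacle is the signed bookkeeping, particularly for Types (1) and (3) whose multiplicities depend linearly on $j$. For Type (1), the $(j+3)$-valent vertex admits $j+2$ distinct resolutions producing neighbors in $\ft_\alpha^\ast(E_j)$ (one for each way to separate $\alpha$ together with one of the $j$ labeled ends or one of the two other incident edges); after invoking the $v_I$-relations, these contributions collapse to the stated multiplicity $j$. For Type (3), contributions come from both Type I cones of $E_j$ (weight $+1$) and Type II cones of $E_j$ (weight $-1$), and their combination yields the signed multiplicity $-(j-1)$. Types (2), (4), and (5) involve analogous but simpler case analyses, and in every case the final weight emerges as the collapse of a signed sum using only Remark \ref{rem-intersect} and the $v_I$-relations, paralleling the computation in Lemma \ref{lem:divintersect}.
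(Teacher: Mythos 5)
Your reorganization of the product (compute $E_j:=D_{12}\cdots D_{1\ldots j}$ first via Lemma \ref{lem:divintersect}, pull back, and apply $\Psi_\alpha$ last) is legitimate in principle by commutativity of tropical intersection products, and it differs from the paper, which instead runs an induction on $j$, adding one pulled-back function $\ft_\alpha^\ast(\varphi_{1\ldots j})$ per step to the base case $\Psi_\alpha\cdot\ft_\alpha^\ast(D_{12})$ and tracking an eleven-case list of codimension-one cones at each stage. However, your key intermediate claim --- that the top-dimensional cones of $\ft_\alpha^\ast(E_j)$ are \emph{exactly} the Type I and Type II cones of $E_j$ with $\alpha$ attached at a new $3$-valent vertex, with inherited weights --- is false. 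The function $\varphi_{1\ldots i}\circ\ft_\alpha$ sends \emph{both} $v_{1\ldots i}$ and $v_{1\ldots i\,\alpha}$ to one, and this creates additional cones. Already for $j=2$: take the codimension-one type $\tau$ with a $4$-valent vertex adjacent to the end $1$, two edges $e_1,e_2$, and a bounded edge $f$ leading to a $3$-valent vertex carrying the ends $2$ and $\alpha$. The three resolutions are spanned by $v_{\{1\}\cup A_1}$, $v_{\{1\}\cup A_2}$, $v_{\{1,2,\alpha\}}$; the function $\varphi_{12}\circ\ft_\alpha$ takes values $0,0,1$ on these and vanishes on the span of $\tau$, so $\tau$ carries weight $1$ in $\ft_\alpha^\ast(D_{12})$ even though the ends $1$ and $2$ do not share a vertex. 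Such ``$\alpha$ grouped with part of $\{1,\ldots,j\}$'' cones are exactly what eventually produce the $j$-dependent multiplicities $j$ and $-(j-1)$ in the statement; your description discards them, so the subsequent $\Psi_\alpha$-computation cannot come out right.

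There is a second, independent problem: even granting your description of $\ft_\alpha^\ast(E_j)$, your neighbor count for Type (1) is wrong. A resolution of the $(j+3)$-valent vertex separating $\{\alpha,i\}$ for a labeled end $i$ leaves $1,\ldots,\hat{\imath},\ldots,j$ at the big vertex, so the ends $1,\ldots,j$ are no longer together and the resulting cone is not one of the cones you declared to constitute $\ft_\alpha^\ast(E_j)$; the relevant resolutions are only those separating $\{\alpha,e_1\}$, $\{\alpha,e_2\}$, $\{e_1,e_2\}$ and $\{1,\ldots,j\}$, i.e.\ four, not $j+2$. Finally, and most importantly, the five claimed multiplicities are asserted rather than computed: the statements that the signed sums ``collapse to'' $j$, $1$, $-(j-1)$, $-2$, $-1$ are precisely the content of the lemma, and no calculation is supplied. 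You would need to either correctly determine all cones and weights of $\ft_\alpha^\ast(E_j)$ (itself a lemma comparable in difficulty to the one at hand) and then carry out the local $\Psi_\alpha$-computations, or follow the paper's route of inducting on $j$ one rational function at a time, where each step only requires evaluating a single pulled-back function $\varphi_{1\ldots j}\circ\ft_\alpha$ on the rays neighboring each codimension-one cone via the relations of \cite{KM07}*{Lemma 2.6}.
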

\begin{proof}
 The proof is again by induction. For $j=2$, we intersect $\Psi_\alpha$ with $\ft_\alpha^\ast(\varphi_{12})=\varphi_{12}\circ \ft_\alpha$. This map sends the vector $v_{12}$ and the vector $v_{12\alpha}$ to one and all other $v_i$ to zero. Codimension one cones of $\Psi_\alpha$ either have a $5$-valent vertex with $\alpha$ or a $4$-valent vertex with $\alpha$ and another $4$-valent vertex. In the first case, if $1$ and $2$ are also adjacent to the $5$-valent vertex, two of the six neighbors are spanned by vectors mapping to one, so we get weight $2$. In the second case, the vector $v_{12\alpha}$ is contained in the codimension one cone itself and appears in the sum of the vectors spanning the neighbors if $1$ and $2$ are adjacent to the vertex with $\alpha$ and the other $4$-valent vertex is adjacent. Such a cone then comes with weight minus one.

For the induction step, assume the statement is true for $j-1$. The codimension one cones of $\Psi_\alpha\cdot \ft_\alpha^\ast( D_{12})\cdot\ft_\alpha^\ast( D_{123})\cdot   \ldots \cdot \ft_\alpha^\ast(D_{1...j})$ then consist of all cones corresponding to a type with the ends $1,\ldots,j-1$ 
\begin{enumerate}
 \item and $\alpha$ at a $j+3$-valent vertex, 
\item and $\alpha$ at a $j+2$-valent vertex and one $4$-valent vertex,
\item at a $j+2$-valent vertex, $\alpha$ somewhere else ,
\item at a $j+1$-valent vertex and a $5$-valent vertex with $\alpha$,
\item at a $j+1$-valent vertex and a $4$-valent vertex, $\alpha$ somewhere else,
\item and $\alpha$ at a $j+1$-valent vertex, next to a $5$-valent vertex,
\item and $\alpha$ at a $j+1$-valent vertex next to a $4$-valent vertex and another $4$-valent vertex,
\item at a $j$-valent vertex next to a $6$-valent vertex with $\alpha$,
\item at a $j$-valent vertex next to a $5$-valent vertex with $\alpha$ and another $4$-valent vertex,
\item at a $j$-valent vertex next to a $5$-valent vertex and $\alpha$ somewhere else,
\item at a $j$-valent vertex next to a $4$-valent vertex, and another $4$-valent vertex and $\alpha$ somewhere else. 
\end{enumerate}
For the cases (6)-(11) it is easy to see that neither the codimension one cone itself nor any of its neighbors contains the vectors $v_{1..j}$ or $v_{1..j\alpha}$ which are mapped to one by $\ft_\alpha^\ast(\varphi_{1..j})=\varphi_{1..j}\circ \ft_\alpha$. Thus any of these cones is taken with weight zero.

In the first case, if $j$ is also adjacent to the $j+3$-valent vertex, we have a neighbor spanned by $v_{1...j\alpha}$ with weight $j-1$, and a neighbor spanned by $v_{1...j}$  with weight one. Altogether the weight  is $j$.

In the second case, if $j$ is also adjacent to the $j+2$-valent vertex the vector $v_{1...j\alpha}$ is contained in the codimension one cone itself. If in addition the $4$-valent vertex is adjacent to the $j+2$-valent vertex, the vector $v_{1...j\alpha}$ appears in the sum of the three neighbors corresponding to the resolutions of the $4$-valent vertex. Since any such resolution comes with weight $j-1$, this codimension one cone has weight $-(j-1)$.

In the third case, if $j$ is also adjacent to the $j+2$-valent vertex, we have one neighbor spanned by $v_{1...j}$  with weight one, so we also get weight one for this cone.

In the fourth case, none of the resolutions of the $j+1$-valent vertex contains the vectors $v_{1...j}$ or $v_{1...j\alpha}$.
 The vector $v_{1...j}$ is contained in the cone itself however if $j$ is also adjacent to the $j+1$-valent vertex. We can also resolve the $5$-valent with $\alpha$ in such a way that $\alpha$ is still at a $4$-valent vertex. All these six neighbors have weight one. Denote the four edges not equal to the end $\alpha$ but adjacent to the $5$-valent vertex by $e_1,\ldots,e_4$ and denote by $A_i$ the subset of ends that can be reached from the $5$-valent vertex via $e_i$. Then the six neighbors are spanned by the vectors $v_{A_1\cup A_2}$, $v_{A_1\cup A_3}$, $v_{A_2\cup A_3}$, $v_{A_1\cup A_2\cup\{\alpha\}}$, $v_{A_1\cup A_3\cup\{\alpha\}}$ and $v_{A_2\cup A_3\cup\{\alpha\}}$ whose sum equals $2v_{A_1}+2v_{A_2}+2v_{A_3}+2\cdot v_{A_1\cup A_2\cup A_3\cup\{\alpha\}}=2v_{A_1}+2v_{A_2}+2v_{A_3}+2\cdot v_{A_4}$ by \cite{KM07}{Lemma 2.6}. Thus we get weight $-2$ if and only if $A_i=\{1,\ldots,j\}$ for $i=1,2,3$ or $4$ which is the case if and only if the $5$-valent vertex with $\alpha$ is adjacent to the $j+1$-valent vertex with $1,\ldots,j$.

The fifth case is analogous to the second case of Lemma \ref{lem:divintersect}. All neighbors have weight one. Therefore we get weight minus one if the $4$-valent vertex is adjacent to the $j+1$-valent vertex and $j$ is adjacent to the $j+1$-valent vertex. The claim follows.
\end{proof}

 Now  we intersect $\bbH^{\trop}_k(\mathbf{x})$ with the codimension $k$-skeleton of $\calM_{0,n}$. Let $K$ denote a cone of the codimension $k$-skeleton. It corresponds to a combinatorial type of a tree $\Gamma$ with $r-k$ vertices $V_1,\ldots,V_{r-k}$ of valence $\val(V_i)=k_i$ with $\sum (k_i-3)=k$. 
Remark \ref{rem:coneintersect} tells us how to pick functions $\varphi_1,\ldots,\varphi_k$ that cut out $K$ with weight one. Thus we want to compute 
$\bbH^{\trop}_k(\mathbf{x})\cdot\varphi_1\cdot\ldots\cdot\varphi_k $ locally around $K$. We have
\begin{align*}
& \bbH^{\trop}_k(\mathbf{x})\cdot\varphi_1\cdot\ldots\cdot\varphi_k = \\
& \ft_{\ast}\Big(\Psi_{n+1}\cdot \prod_{i=n+2}^{n+r-k} \big(\Psi_i \cdot \ev_i^{\ast}(p_i) \big) \Big) \cdot\varphi_1\cdot\ldots\cdot\varphi_k =\\
& \ft_{\ast}\Big(  \Psi_{n+1}\cdot \prod_{i=n+2}^{n+r-k} \big(\Psi_i \cdot \ev_i^{\ast}(p_i) \big) \cdot\ft^\ast(\varphi_1)\cdot\ldots\cdot\ft^\ast(\varphi_k)    \Big) =\\
& \ft_{\ast}\Big(  \prod_{i=n+1}^{n+r-k}\Psi_{i} \cdot\ft^\ast(\varphi_1)\cdot\ldots\cdot\ft^\ast(\varphi_k)  \cdot \prod_{i=n+2}^{n+r-k}  \ev_i^{\ast}(p_i)  \Big)
\end{align*}
where the second equality holds by the projection formula \cite{AR07}*{Proposition 4.8}.

To get a nonzero intersection of $\prod_{i=n+1}^{n+r-k}\Psi_{i} \cdot\ft^\ast(\varphi_1)\cdot\ldots\cdot\ft^\ast(\varphi_k) $ with the cycle $ \prod_{i=n+2}^{n+r-k}  \ev_i^{\ast}(p_i) $, the ends $n+1,\ldots, n+r-k$ must be adjacent to different vertices. The type $\Gamma$ corresponding to $K$ has $r-k$ vertices, and so we can attach one new end to each vertex of $\Gamma$. There are $m(\Gamma)$ ways to do this, where we use the notation from Definition \ref{def:weights} (we do not need to pick moving vertices here). Hence $\bbH^{\trop}_k(\mathbf{x})$ intersects $K$ in $m(\Gamma)$ points with a nonzero weight.
Each such point is the push-forward of an intersection point of  $ \prod_{i=n+2}^{n+r-k}  \ev_i^{\ast}(p_i) $ with a cone $\tilde{K}$ of $\prod_{i=n+1}^{n+r-k}\Psi_{i}$ where all ends are adjacent to different vertices. To compute the weight of $\tilde{K}$ in $\prod_{i=n+1}^{n+r-k}\Psi_{i} \cdot\ft^\ast(\varphi_1)\cdot\ldots\cdot\ft^\ast(\varphi_k) $, we use a generalization of Lemma \ref{lem:pullbackintersect} analogous to Remark \ref{rem:coneintersect}: in $\tilde{K}$, we have $r-k$ vertices $V_i$ of valence $k_i+1$ each of which is adjacent to an end with a Psi-class condition. We thus get  weight $\prod (k_i-2)$. From Lemma \ref{lem-evproduct}, the further intersection with $ \prod_{i=n+2}^{n+r-k}  \ev_i^{\ast}(p_i) $ yields a factor equal to the product of weights of all bounded edges.
We have thus proved the following statement that again illustrates the analogy between classical and tropical Hurwitz loci (compare with Lemma \ref{coeff}):

\begin{prop}
 Let $K$ be a cone of $\calM_{0,N}$ of codimension $k$, corresponding to the type $\Gamma$. 
Then the intersection $\bbH^{\trop}_k(\mathbf{x})\cdot K$ consists of $m(\Gamma)$ points, each with weight $\prod_v (\val(v)-2)\cdot \varphi(\Gamma)$
where the product goes over all vertices $v$ of $\Gamma$.
\end{prop}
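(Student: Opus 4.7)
The plan is to follow directly the computation already sketched in the two paragraphs preceding the statement. First, I would use Remark \ref{rem:coneintersect} to exhibit rational functions $\varphi_1,\ldots,\varphi_k$ on $\calM_{0,n}$ whose divisorial intersection cuts out the cone $K$ with weight one (the functions are obtained by grouping the intersection products $D_{A_1 \cup A_2}\cdot D_{A_1\cup A_2\cup A_3}\cdot\ldots$ associated to each higher-valent vertex of $\Gamma$, one group per vertex). With this choice, computing $\bbH_k^{\trop}(\mathbf{x})\cdot K$ reduces to evaluating $\bbH_k^{\trop}(\mathbf{x})\cdot \varphi_1\cdot\ldots\cdot\varphi_k$ locally around $K$.

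Next, I would apply the tropical projection formula \cite{AR07}*{Proposition 4.8} to push the $\varphi_i$'s inside the forgetful map, rewriting the intersection as
\[
\ft_*\Big(\prod_{i=n+1}^{n+r-k}\Psi_i \cdot \ft^*(\varphi_1)\cdot\ldots\cdot\ft^*(\varphi_k)\cdot\prod_{i=n+2}^{n+r-k}\ev_i^*(p_i)\Big).
\]
For this expression to be nonzero locally at a point of $K$, the contracted ends $n+1,\ldots,n+r-k$ must be attached to pairwise distinct vertices of $\Gamma$. Since $\Gamma$ has exactly $r-k$ vertices, each such configuration attaches one contracted end to each vertex, and the evaluation conditions (together with the genericity of the $p_i$) single out a unique point per compatible vertex ordering. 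By Definition \ref{def:weights} (applied with no moving vertices), the number of such orderings is exactly $m(\Gamma)$, yielding the stated count of intersection points.

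It remains to compute the weight of each intersection point. For a fixed attachment, the underlying combinatorial type $\tilde{\Gamma}$ has, at each vertex $V_i$ of $\Gamma$, one extra contracted end carrying a Psi-class, so $V_i$ has valence $\val(V_i)+1$. A generalization of Lemma \ref{lem:pullbackintersect} in the direction indicated in Remark \ref{rem:coneintersect}, applied simultaneously at each of the $r-k$ higher-valent vertices, shows that the local intersection weight from $\prod \Psi_i\cdot\prod\ft^*(\varphi_j)$ at $\tilde{\Gamma}$ is the product of local contributions $\val(V_i)-2$, giving a factor $\prod_v (\val(v)-2)$. The remaining intersection with $\prod \ev_i^*(p_i)$ is handled by Lemma \ref{lem-evproduct}: in local coordinates on the cone of $\tilde{\Gamma}$, the evaluation map is a triangular square matrix whose determinant (in absolute value) is the product of weights of the bounded edges of $\tilde{\Gamma}$, i.e.\ $\varphi(\Gamma)$. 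Finally, the forgetful push-forward $\ft_*$ has index one on each such cell, so multiplying the two factors yields the claimed weight $\prod_v(\val(v)-2)\cdot\varphi(\Gamma)$.

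The main obstacle is the generalization of Lemma \ref{lem:pullbackintersect} to the multi-vertex setting: Lemma \ref{lem:pullbackintersect} is proved by an induction producing a delicate cancellation of contributions from pairs of resolutions (with weights $\pm(j-1)$, $\pm 2$, etc.), and extending this to an arbitrary cone $\tilde{\Gamma}$ with $r-k$ higher-valent vertices simultaneously requires checking that cross-terms between different vertices vanish — each intersection $\ft^*(\varphi_j)$ associated to vertex $V_i$ sees only the edges incident to $V_i$, so the computation factors vertex-by-vertex, but making this factorization precise (and checking that no spurious contributions from $4$-valent resolutions adjacent to ends with Psi-classes survive at vertices other than the one being analyzed) is the step that needs the most care.
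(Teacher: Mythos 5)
Your proposal is correct and follows essentially the same route as the paper's own argument, which is precisely the computation carried out in the two paragraphs preceding the statement: cutting out $K$ via Remark \ref{rem:coneintersect}, applying the projection formula, counting the $m(\Gamma)$ attachments of contracted ends, and combining the generalized Lemma \ref{lem:pullbackintersect} with Lemma \ref{lem-evproduct} for the weight. The ``main obstacle'' you flag --- the multi-vertex generalization of Lemma \ref{lem:pullbackintersect} --- is exactly the step the paper itself leaves as an asserted analogue of Remark \ref{rem:coneintersect}, so you have not missed anything the paper supplies.
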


\section{Wall Crossings}

We have seen that Hurwitz cycles are polynomials in each chamber $\fc$. In this section we investigate wall-crossings, i.e. how the cycles change from chamber to chamber. 
\begin{defn}
Let $I\subseteq \{1, \ldots, n\}$ and consider the wall $W_I=\{\sum_{i\in I}x_i =0\} $. Let $\fc^+$  and $\fc^-$ be two adjacent chambers: $\sum_{i\in I}x_i >0$ in $\fc^+$,  $\sum_{i\in I}x_i < 0$ in $\fc^-$ and for every $J\not=I \subseteq  \{1, \ldots, n\}$ the sign of $ \sum_{i\in J}x_i$ is the same in both chambers. Let $\bbH_k^+(\mathbf{x})$ (resp. $\bbH_k^-(\mathbf{x})$) be the polynomial class giving the Hurwitz cycle in $\fc^+$(resp. $\fc^-$). By {wall crossing formula} at the wall $I$ we mean the formal difference of cycles:
\begin{equation}
WC_{I,k}(\mathbf{x}):= \bbH_k^+(\mathbf{x}) -\bbH_k^-(\mathbf{x}) \in Z_k(\mon).
\end{equation}

\end{defn}
Naturally the difference of two polynomial cycles is a polynomial cycle: the upshot is that such a cycle can be expressed inductively in terms of Hurwitz cycles.

\begin{note} Let $\mathbf{x}$ and $\mathbf{y}$ be two tuples of integers such that $\sum{x_i}=-\sum{y_j}=\epsilon\not=0$. Consider the Hurwitz cycles $\bbH_{k_1}(\mathbf{x},-\epsilon)\in \overline{M}_{0,n_1+1}$ and $\bbH_{k_2}(\mathbf{y},\epsilon)\in \overline{M}_{0,n_2+1}$ and the gluing morphism 
$\gl:\overline{M}_{0,n_1+1}\times\overline{M}_{0,n_2+1}\to \overline{M}_{0,n_1+n_2}$. We denote:
$$
\bbH_{k_1}(\mathbf{x},-\epsilon)\boxtimes\bbH_{k_2}(\mathbf{y},\epsilon):= \gl_\ast\left( \bbH_{k_1}(\mathbf{x},-\epsilon)\times\bbH_{k_2}(\mathbf{y},\epsilon)\right) \in Z_{k_1+k_2}( \overline{M}_{0,n_1+n_2}).
$$
\end{note}

With this notation in place we are ready to state the wall crossing formulas.

\begin{thm}[Classical Wall Crossing]
\label{thm:wc}
Let $I\subseteq \{1, \ldots, n\}$ and consider the wall $W_I=\{\epsilon:=\sum_{i\in I}x_i =0\} $. Then:\begin{equation}
\label{eq:wc}
WC_{I,k}(\mathbf{x})= \epsilon \sum_{j=\max\{0,1+k-r_2\}}^{\min\{k,r_1-1\}}  {{r-k}\choose{|I|-1-j}}\bbH_j(\mathbf{x}_I,-\epsilon)\boxtimes\bbH_{k-j}(\mathbf{x}_{I^c},\epsilon)
\end{equation}
\end{thm}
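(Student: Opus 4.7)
The plan is to derive the wall-crossing formula directly from the boundary expression of Lemma \ref{coeff}, according to which both $\bbH_k^+(\mathbf{x})$ and $\bbH_k^-(\mathbf{x})$ are sums over graphs $\Gamma \in \cT_{r-k}(\mathbf{x})$ with coefficient $m_{\fc^\pm}(\Gamma)\,\varphi^\pm(\Gamma) \prod_v (\val(v)-2)$ on $\Delta_\Gamma$. The combinatorial types of trees and the strata $\Delta_\Gamma$ are unaffected by crossing the wall $W_I$; what may change across the wall is the orientation of an edge whose weight involves $\epsilon := \sum_{i\in I}x_i$, altering both $m_\fc$ and the polynomial $\varphi$.

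The first key observation is that within a fixed tree $\Gamma$ at most one internal edge $e$ can have weight $\pm\epsilon$, since distinct internal edges of a tree induce distinct bipartitions of the leaves and this weight corresponds uniquely to the bipartition $(I, I^c)$. Trees with no such ``wall edge'' contribute identically in both chambers and hence nothing to $WC_{I,k}(\mathbf{x})$. For a tree $\Gamma$ carrying a wall edge $e$, cutting along $e$ produces sub-trees $\Gamma_I \in \cT_{p}(\mathbf{x}_I,-\epsilon)$ and $\Gamma_{I^c}\in \cT_{q}(\mathbf{x}_{I^c},\epsilon)$ with $p+q=r-k$.

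The heart of the argument is the identity
\begin{equation*}
m_{\fc^+}(\Gamma)\,\varphi^+(\Gamma) - m_{\fc^-}(\Gamma)\,\varphi^-(\Gamma) = \binom{p+q}{p}\, m(\Gamma_I)\,m(\Gamma_{I^c})\,\epsilon\,\varphi(\Gamma_I)\,\varphi(\Gamma_{I^c}).
\end{equation*}
Since flipping the orientation of $e$ negates the factor it contributes to the product of positive edge weights, $\varphi^-(\Gamma)=-\varphi^+(\Gamma)=-\epsilon\,\varphi(\Gamma_I)\,\varphi(\Gamma_{I^c})$ as polynomials in $\mathbf{x}$, reducing the left-hand side to $(m_{\fc^+}(\Gamma)+m_{\fc^-}(\Gamma))\,\varphi^+(\Gamma)$. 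A compatible vertex ordering of $\Gamma$ in $\fc^+$ (resp.\ $\fc^-$) is exactly a compatible ordering in which the tail of $e$ precedes (resp.\ follows) its head; dropping this single constraint leaves precisely the shuffles of compatible orderings of $\Gamma_I$ and $\Gamma_{I^c}$, so $m_{\fc^+}(\Gamma)+m_{\fc^-}(\Gamma)=\binom{p+q}{p}\,m(\Gamma_I)\,m(\Gamma_{I^c})$.

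It remains to reassemble this data into the stated formula. The valences at the endpoints of $e$ in $\Gamma$ coincide with those in the sub-trees (the cut leaves replace $e$), so $\prod_v (\val(v)-2)$ factors across the two sides, and the gluing morphism is birational, giving $\gl_*(\Delta_{\Gamma_I}\times\Delta_{\Gamma_{I^c}}) = \Delta_\Gamma$. Writing $p = r_1-j$ and $q = r_2-(k-j)$ with $r_1=|I|-1$, $r_2=|I^c|-1$, each such $\Gamma$ contributes a single summand of $\bbH_j(\mathbf{x}_I,-\epsilon)\boxtimes\bbH_{k-j}(\mathbf{x}_{I^c},\epsilon)$ weighted by $\binom{r-k}{|I|-1-j}$ and the global prefactor $\epsilon$, which matches \eqref{eq:wc}; the range of $j$ is dictated by the nonemptiness conditions $p,q\geq 1$. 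The main obstacle I anticipate is carefully justifying the shuffle identity and verifying the sign flip $\varphi^-=-\varphi^+$ together with the precise correspondence between compatible orderings on $\Gamma$ and shuffles of compatible orderings on the two sub-trees; the remaining content is transparent bookkeeping of multiplicities across the gluing.
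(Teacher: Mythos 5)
Your proposal is correct and follows essentially the same route as the paper: identify the unique ``special'' edge of weight $\pm\epsilon$, observe that trees without one cancel, and cut along it. Your per-tree coefficient identity $(m_{\fc^+}(\Gamma)+m_{\fc^-}(\Gamma))\varphi^+(\Gamma)=\binom{r-k}{|I|-1-j}m(\Gamma_I)m(\Gamma_{I^c})\,\epsilon\,\varphi(\Gamma_I)\varphi(\Gamma_{I^c})$ is exactly the paper's $\Cut$/$\Glue$ bijection on ordered tropical dual graphs (with the sign analysis of the $-\epsilon$ edge and the shuffle count of vertex orderings), merely repackaged algebraically.
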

\begin{proof}
The proof of Theorem \ref{thm:wc} is parallel to \cite{CJM10}*{Theorem 6.10}.
We first remark that the bounds of the summation are simply  recording the fact that $j$ (resp. $k-j$) must be less than or equal that the dimension of $\overline{M}_0^{\sim}(\mathbf{x}_I,-\epsilon)$ (resp. $\overline{M}_0^{\sim}(\mathbf{x}_{I^c},\epsilon)$). One may make the summation simply from $0$ to $k$ by noting that the Hurwitz loci of dimension greater than the corresponding moduli spaces of maps are empty.

Hurwitz cycles are completely described by the tropical dual graphs of the boundary strata in the moduli spaces of maps. In order for a tropical dual graph to contribute to the wall crossing, it must have an edge with weight equal to the equation of the wall. For a given tropical dual graph $\Gamma$, if such an edge exists, then it is unique and we call it the special edge. 

Cutting the special edge separates the graph into two subtrees  $\Gamma_I$ and $\Gamma_{I^c}$. 
The ends  of $\Gamma_I$ (resp. $\Gamma_{I^c}$) are labelled by $x_i\in I$ and $-\epsilon$ (resp. $x_i\in I^c$ and $\epsilon$). We note immediately that $\Gamma_I,\Gamma_{I^c}$ are a pair of graphs identifying a boundary stratum appearing in the product of  Hurwitz cycles on the right hand side of formula \eqref{eq:wc}. We make this connection more precise in order to extract quantitative information.

For $0\leq j\leq k$, let
$
R_j=\left\{ \left(\Gamma_1, \Gamma_2, \mathfrak{m} \right)\right\},
$
where:
\begin{itemize}
\item $\Gamma_1$ is the tropical dual graph of a stratum in  $\tilde{\bbH}_j(\mathbf{x}_I,-\epsilon)$ (pushing forward non-trivially to $\mon$).
\item $\Gamma_2$ is the tropical dual graph of a stratum in  $\tilde{\bbH}_{k-j}(\mathbf{x}_{I^c},\epsilon)$ (pushing forward non-trivially to $\mon$).
\item $\mathfrak{m}$ is a total ordering of the vertices of $\Gamma_1 \cup \Gamma_2$, compatible with the total ordering of the vertices of  $\Gamma_1$ and $\Gamma_2$.
\end{itemize}

Cutting the special edge gives a function $\Cut$ from the set of graphs contributing to the wall crossing formula to the union $R= \cup_{j=0}^k R_j$. We claim that $\Cut$ is a bijection, and it will come as little  surprise that the inverse function  $\Glue$ consists in gluing  the two graphs along the special edge labelled $\pm \epsilon$. The total ordering $\mathfrak{m}$ is precisely the information needed to make such gluing well defined. We note in particular that $\mathfrak{m}$ determines in which direction the special edge is pointing once it is glued.

Given a graph  $\Gamma$ contributing to the wall crossing, we note that the multiplicity it contributes to the wall crossing formulas is $\epsilon$ times the product of all weights of all non-special internal edges (this is obvious if $\Gamma$ comes from $\bbH_k^+(\mathbf{x})$. If  $\Gamma$ comes from $\bbH_k^-(\mathbf{x})$, then the weight of the special edge is $-\epsilon$, and there is another minus sign coming from the wall crossing formula). On the other hand the pair of graphs $\Gamma_1$ and $\Gamma_2$ in $\Cut(\Gamma)$ have multiplicity in $\bbH_j(\mathbf{x}_I,-\epsilon)\boxtimes\bbH_{k-j}(\mathbf{x}_{I^c},\epsilon)$ equal to the product of all non-special internal edges of $\Gamma$. Therefore $\Cut$ is a bijection that preserves the multiplicities on both sides of formula \eqref{eq:wc}.

 The proof is then concluded by remarking that if $\Gamma_1, \Gamma_2$ appear in $R_j$, there are ${{r-k}\choose{|I|-1-j}}$ possible ways of giving a total ordering of the vertices of $\Gamma_1 \cup \Gamma_2$, compatible with the total ordering of the vertices of  $\Gamma_1$ and $\Gamma_2$. 
\end{proof}

The wall crossing formula on the tropical side is similar: the only apparent difference is the lack of the multiplicative factor $\epsilon$, reflecting the fact that polynomiality does not appear in the generic representative of a tropical Hurwitz cycle.
Differently from the classical side however, it is not only the weights that depend on $\mathbf{x}$ but the cycles themselves, making even the statement of a wall crossing formula more subtle.

Fix a wall $W_I$ and two adjacent chambers $\fc^+$  and $\fc^-$ with $\epsilon:=\sum_{i\in I}x_i >0$ in $\fc^+$,  $\epsilon < 0$ in $\fc^-$. We denote by $\bbH^{\trop,+}_k(\mathbf{x})$ resp.\ $\bbH_k^{\trop,-}(\mathbf{x})$ the Hurwitz cycles in the two chambers.
We would like to evaluate  both $\bbH^{\trop,+}_k(\mathbf{x})$  and $\bbH_k^{\trop,-}(\mathbf{x})$
 at $\mathbf{x}\in \fc^+$
 and then consider the difference. However the fact that $\epsilon$ changes sign when crossing the wall requires some interpretation, since the edge lengths of tropical covers
are required to be positive. 
Consider a point in $\bbH^{\trop,-}_k(\mathbf{x})$ corresponding to a tropical cover with an edge of weight $-\epsilon$ connecting two vertices that are mapped to two points $p<q$ in $\TP$. The length of the special edge of this tropical cover then equals $\frac{q-p}{-\epsilon}$, which is positive in $\fc^-$, but negative in $\fc^+$. In the latter case, $\bbH^{\trop,-}_k(\mathbf{x})$ does not live in  $\calM_{0,n}$ but in the real vector space $\mathbb{R}\calM_{0,n}$ surrounding it.
To define $\bbH^{\trop,-}_k(\mathbf{x})\subset \calM_{0,n}$ for $\mathbf{x}\in \fc^+$, we consider  the linear map $c_\epsilon$ that maps $v_I$ to itself for all $I\neq \epsilon$ and $v_\epsilon$ to $-v_\epsilon$, and redefine $\bbH^{\trop,-}_k(\mathbf{x})$ to be the image of $\bbH^{\trop,-}_k(\mathbf{x})\subset \mathbb{R}\calM_{0,n}$  under $c_\epsilon$.

Also the weights of the cycle $\bbH^{\trop,-}_k(\mathbf{x})$ require some interpretation: in Lemma \ref{lem:cellsweights} we have seen that the weight of a cell equals the gcd of products of weights of trees where we shrink edges in such a way that all moving vertices merge with a non-moving vertex. When computing $\bbH_k^{\trop,-}(\mathbf{x})$, we use the convention to use the negative gcd whenever $-\epsilon$ appears in a product.

\begin{defn}
With the notation from the above paragraph, we now define the \emph{tropical Wall Crossing} as
$$WC^{\trop}_{I,k}(\mathbf{x}):= \bbH^{\trop,+}_k(\mathbf{x}) - \bbH^{\trop,-}_k(\mathbf{x}).$$
\end{defn}

\begin{example}\label{ex:wcm05}
 In Example \ref{ex:m05hurwitzcurve}, we  consider the Hurwitz curve for  $\mathbf{x}=(x_1,
\ldots,x_5)$ in $\fc^+$ defined by $x_1, x_2>0$, $x_3,x_4,x_5<0$, $x_1>|x_i+x_j|$ for  $i,j \in\{3,4,5\}$ and $x_2<-x_i$ for  $i=3,4,5$ in $\calM_{0,5}$. We now cross the wall $\epsilon:=x_1+x_4+x_5=0$. 

$\bbH^{\trop,+}_1(\mathbf{x}) $ and $ \bbH^{\trop,-}_1(\mathbf{x})$ only differ in cones whose corresponding type contains an edge with weight $\pm\epsilon$, i.e.\ in any cone containing the vector $v_{23}$. There are three such cones. For two of these cones, the Hurwitz curve $\bbH^{\trop,+}_1(\mathbf{x}) $ looks as depicted in Figure \ref{fig:m052} and for the third cone as in Figure \ref{fig:m054}. In $ \bbH^{\trop,-}_1(\mathbf{x})$, any edge with weight $-\epsilon$ changes direction. In the cone spanned by $v_{23}$ and $v_{14}$, if we choose the vertex adjacent to end $5$ to be the moving vertex, we now have two ways to order the remaining vertices, both corresponding to linear ends. Thus one linear edge is replaced by two linear ends in this cone. The same is true for the cone spanned by $v_{23}$ and $v_{15}$ by symmetry. In the cone spanned by $v_{23}$ and $v_{45}$ contrarily, if the moving vertex is adjacent to end $1$, we have only one way to order the remaining vertices corresponding to a linear edge instead of the two linear ends (see Figures \ref{fig:m053} and \ref{fig:m054}). 
Note also that the constant ends of direction $v_{23}$ do not have a factor of $\epsilon$ in their weight, thus they appear with the same sign both in $\bbH^{\trop,+}(_1\mathbf{x}) $ and $ \bbH^{\trop,-}_k(\mathbf{x})$ and cancel in the difference. The other constant ends have weight $\epsilon$ in $\bbH^{\trop,+}_1(\mathbf{x}) $ but weight $-\epsilon$ in $ \bbH^{\trop,-}_k(\mathbf{x})$, so they do not cancel but add up to a constant end with weight $2\epsilon$. Figure \ref{fig:wc} depicts the tropical wall crossing curve for these two chambers. Blue edges are edges of $\bbH^{\trop,+}_1(\mathbf{x}) $, red edges are edges of $ \bbH^{\trop,-}_k(\mathbf{x})$. The green constant ends appear in both and add up to the weight $2\epsilon$. The picture only shows the three cones of $\calM_{0,5}$ in which the difference $\bbH^{\trop,+}_1(\mathbf{x}) - \bbH^{\trop,-}_k(\mathbf{x})$ is nonzero.

\begin{figure}[tb]
\input{wc.pstex_t}
\caption{The wall crossing curve in $\calM_{0,5}$ for $\fc^+$ and $\fc^-$.}
\label{fig:wc}
\end{figure}

\end{example}

As in the classical world, we want to describe a tropical wall crossing in terms of cutting and regluing. 
 Any cell contributing to the wall crossing parameterizes graphs with a special edge that we may cut to obtain two subgraphs each of which is a tropical cover of the projective line.  Remembering the length of the edge that gets cut accounts for the fact that we want to mod out by translations only once. We make this process precise in the following paragraph.

\begin{construction}
 
Let $I\subseteq \{1, \ldots, n\}$ and consider the wall $W_I=\{\epsilon:=\sum_{i\in I}x_i =0\} $. 
Assume that $|I|=n_1$ and $|I^c|=n_2$ and denote $r_i=n_i-1$ for $i=1,2$.
Let $\Gamma$ be a tropical cover in a $k$-dimensional cell that contributes to the wall crossing, and hence it contains an edge $e$ with weight $\pm\epsilon$. Cutting $e$ we obtain two subgraphs $\Gamma_1$ and $\Gamma_2$ that are themselves tropical covers of the projective line. We assume that $\Gamma_1$ contains the ends in $I$ and $\Gamma_2$ contains the ends in $I^c$. Both $\Gamma_1$ and $\Gamma_2$ have an extra end that we denote by $E_1$ (resp.\ $E_2$). According to our conventions ends are oriented inward, and hence the balancing condition gives $E_1$ weight $-\epsilon$ and $E_2$ weight $\epsilon$. 
Assuming that $r_1-j$ fixed vertices are in $\Gamma_1$,  we can interpret $\Gamma_1$ as an element in $\calM_{0,r_1-j}(\TP,(\mathbf{x}_I,-\epsilon))$ and $\Gamma_2$ as an element in $\calM_{0,r_2-(k-j)}(\TP,(\mathbf{x}_{I^c},\epsilon))$ (adjusting the labeling of the ends). We wish to remember the length of the edge we cut and whether $\Gamma$ belonged to $\bbH^{\trop,+}_1(\mathbf{x}) $ or $\bbH^{\trop,-}_1(\mathbf{x}) $, so we define:
$
\Cut (\Gamma) \in \left(\calM_{0,r_1-j}(\TP,(\mathbf{x}_I,-\epsilon))\times \IR\right)\times \left(\calM_{0,r_2-(k-j)}(\TP,(\mathbf{x}_{I^c},\epsilon))\times \IR\right)
$
by 

\begin{equation} 
\label{eq:cut}
\Cut(\Gamma) =\begin{cases} \left((\Gamma_1, 0) , (\Gamma_2, l(e))\right) & \Gamma \in  \bbH^{\trop,+}_1(\mathbf{x}) \\  \left((\Gamma_1, 0) , (\Gamma_2, -l(e))\right) & \Gamma \in  \bbH^{\trop,-}_1(\mathbf{x}). \end{cases}
\end{equation}

\begin{rem} 
Our choice to have two $\IR$ coordinates and assigning one of them to be $0$ seems, and in fact is, somewhat arbitrary. However, it will be handy when comparing weights of the same cells appearing on opposite sides of the wall crossing formula.
\end{rem}
\begin{rem}
In order for  equation \eqref{eq:cut} to make sense we need to drop Convention \ref{conv} (see page \pageref{conv}) and remember tropical covers are equivalent up to translation. We therefore use one of the vertices in each of the subgraphs to fix a parameterization of $\TP$.
\end{rem}

We reverse this operation to glue two graphs in  $\left(\calM_{0,r_1-j}(\TP,(\mathbf{x}_I,-\epsilon))\times \IR\right) \times\left(\calM_{0,r_2-(k-j)}(\TP,(\mathbf{x}_{I^c},\epsilon))\times \IR\right)$. Denote by $l_i$ the $\IR$ coordinate function for the $i$-th factor in the product.
If $V_i$ is the interior vertex of $\Gamma_i$ adjacent to $E_i$, then  define $\ev_{E_i}: \calM_{0,r_1-j}(\TP,(\mathbf{x}_I,-\epsilon))\times \IR\to \IR$ by $$\ev_{E_i}(\Gamma_i, l_i)=\ev_{V_i}+(-1)^il_i\cdot \epsilon. $$

We can glue any two pieces in $(\ev_{E_1}-\ev_{E_2})^\ast(0)\cdot (\calM_{0,r_1-j}(\TP,(\mathbf{x}_I,-\epsilon))\times \IR) \times (\calM_{0,r_2-(k-j)}(\TP,(\mathbf{x}_{I^c},\epsilon))\times \IR) $ to one tropical cover in $\calM_{0,r-k}(\TP,\mathbf{x})$. To make this operation the inverse to $\Cut$ defined above, we further impose $l_1=0$. 


The above discussion shows that 
\begin{align*}\mathcal{G} := (\ev_{E_1}-\ev_{E_2})^\ast(0)\cdot l_1^\ast(0) \cdot  \Big( &(\calM_{0,r_1-j}(\TP,(\mathbf{x}_I,-\epsilon))\times \IR) \times \\  &  (\calM_{0,r_2-(k-j)}(\TP,(\mathbf{x}_{I^c},\epsilon))\times \IR) \Big)\end{align*}
 is in bijection to the set of covers in $\calM_{0,r-k}(\TP,\mathbf{x})$ contributing to the wall crossing 
(see \cite{GMO} for a similar gluing construction for moduli spaces). 

We now define the folding map $\fold:\mathcal{G} \rightarrow \calM_{0,r-k}(\TP,\mathbf{x})$ that maps $l_2$ to its absolute value.
The folding map is not globally a tropical morphism, it is only locally a morphism away from $\mathcal{G}\cdot l_2^\ast(0)$.
Consequently, while $\mathcal{G}$
is a tropical variety, its image under $\fold$ is not. Since the wall crossing curve is also not a tropical variety however, this is not disturbing.

To make the image of $\fold$ a weighted polyhedral complex, we give each cell the sum of the weights of its preimages (cells are subdivided by 
$\mathcal{G}\cdot l_2^\ast(0))$. This coincides with the weight of the push forward of $\fold$ locally where it is a morphism.
\end{construction}

We  now state the first version of the tropical wall crossing. 
 \begin{prop}[Tropical Wall Crossing, first version]
Let $I\subseteq \{1, \ldots, n\}$ and consider the wall $W_I=\{\epsilon:=\sum_{i\in I}x_i =0\} $. Then:

\begin{equation}\hspace{-1cm}
\begin{array}{cl}
WC^{\trop}_{I,k}(\mathbf{x}) &=  \ft_{\ast} \bigg(  \sum_{j=\max\{0,1+k-r_2\}}^{\min\{k,r_1-1\}} \;\;\;\;  \sum_{n+1 \leq i_1<\ldots<i_{r_1-j}\leq n+r-k}  
 \fold   \Big(  \\ & \\
 & \left.\left.  \hspace{-1cm}
\Big( \Psi_{n_1+2}\cdot \prod_{l=2}^{r_1-j} \big(\Psi_{l+n_1+1} \cdot \ev_{l+n_1+1}^{\ast}(p_{i_{l}}) \big)      
\cdot \calM_{0,r_1-j}(\TP,(\mathbf{x}_I,-\epsilon)) \Big) \times \IR  
\times   \right.\right.\\ & \\
& \left.\left.\hspace{-1cm}
\Big( \Psi_{n_2+2}\cdot \prod_{l=2}^{r_2-(k-j)} \big(\Psi_{l+n_2+1} \cdot \ev_{l+n_2+1}^{\ast}(p_{j_{l}}) \big)   \cdot \calM_{0,r_2-(k-j)}(\TP,(\mathbf{x}_{I^c},\epsilon)) \Big) \times \IR \times \right.\right.\\ & \\
&\hspace{-1cm}
 (\ev_{E_1}-\ev_{E_2})^\ast(0)\cdot l_1^\ast(0)
\Big)\bigg), 
\end{array}
\label{twc-one}
\end{equation}
where $j_1<\ldots<j_{r_2-(k-j)}$ are defined by $$\{n+1,\ldots,n+r-k\}=\{i_1,\ldots,i_{r_1-j}, j_1,\ldots, j_{r_2-(k-j)}\}.$$ 

\end{prop}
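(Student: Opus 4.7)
The plan is to show that cells contributing to $WC^{\trop}_{I,k}(\mathbf{x})$ are precisely those whose combinatorial type contains an edge $e$ of weight $\pm\epsilon$ (a \emph{special edge}), and then to identify, via the Cut/Glue/Fold construction, the locally symmetric gluing of two smaller tropical Hurwitz intersection products as the generator of the wall-crossing. First I would verify the support statement: a top-dimensional cell of $\bbH^{\trop,\pm}_k(\mathbf{x})$ is determined (Lemma \ref{lem:cellsweights}) by a graph $\Gamma$, a choice of moving vertices, and a vertex ordering compatible with the edge orientation induced by $\mathbf{x}$. Only edges whose weight involves the linear form $\sum_{i\in I} x_i$ change orientation when crossing $W_I$; by the balancing condition there is at most one such edge and its weight is $\pm\epsilon$. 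Cells whose graph has no special edge appear identically on both sides and cancel, so $WC^{\trop}_{I,k}(\mathbf{x})$ is supported on cells containing a special edge. Cells with a special edge come in two types: they either appear on both sides with opposite direction of $v_\epsilon$ and opposite weight-signs (via the re-definition by $c_\epsilon$ and the negative-gcd convention), or they appear on only one side (because the corresponding vertex ordering becomes incompatible on the other side).

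Next I would analyze $\Cut$. Given such a cell with graph $\Gamma$, removing the special edge splits $\Gamma$ into $\Gamma_1$ carrying the ends of $I$ plus a new end $E_1$ of weight $-\epsilon$, and $\Gamma_2$ carrying the ends of $I^c$ plus a new end $E_2$ of weight $\epsilon$. The moving vertices, Psi-class markings at contracted ends, and evaluation conditions distribute between $\Gamma_1$ and $\Gamma_2$ according to the choice of which $r_1-j$ of the fixed points $\{p_{n+1},\ldots,p_{n+r-k}\}$ lie on the $\Gamma_1$-side; this accounts for the inner sum in \eqref{twc-one}. Because the positions of $E_1$ and $E_2$ must match after gluing back (modulo a translation), the condition $(\ev_{E_1}-\ev_{E_2})^*(0)$ exactly recovers the image of the special edge, and $l_1^*(0)$ fixes the $\IR$-parametrization on the $\Gamma_1$-factor; the remaining $l_2$-coordinate records the length of the special edge, signed according to whether $\Gamma$ belonged to $\bbH^{\trop,+}_k$ or $\bbH^{\trop,-}_k$, which is what $\fold$ then unifies (away from $l_2=0$) by adding the weights of the two half-lines into a single cell of the wall-crossing complex.

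The third step is the weight computation. On the right-hand side of \eqref{twc-one} the two factors are intersection products of Psi-classes and evaluation pullbacks on $\calM_{0,r_1-j}(\TP,(\mathbf{x}_I,-\epsilon))$ and $\calM_{0,r_2-(k-j)}(\TP,(\mathbf{x}_{I^c},\epsilon))$. Combining Lemma \ref{lem-evproduct} and Lemma \ref{lem:cellsweights} applied separately to each factor, the weight of an output cell is a product of weights of non-special edges of $\Gamma$ on each side, times the contribution of $(\ev_{E_1}-\ev_{E_2})^*(0)$, which by Remark \ref{rem-intersect} equals $|\epsilon|$ (the $\gcd$ of the relevant $1\times 1$ minor is just the weight of the new edge). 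On the classical/tropical Hurwitz side, the weight of the same cell in $\bbH^{\trop,\pm}_k(\mathbf{x})$ is, by the same lemmas applied directly to $\Gamma$, the product of weights of all bounded non-end edges, which equals the previous product times $|\epsilon|$. The signed $\fold$ and the negative-gcd convention on the $-$ side then produce $\pm\epsilon$, matching the signed difference $\bbH^{\trop,+}_k(\mathbf{x})-\bbH^{\trop,-}_k(\mathbf{x})$. The forgetful pushforward $\ft_*$ on the right and on the definition of $\bbH^{\trop,\pm}_k$ contributes identically, since the contracted marked ends labeling the fixed branch points are distributed but then forgotten after evaluation.

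The main obstacle I anticipate is the careful bookkeeping of signs and orientations: verifying that the combination of the $c_\epsilon$ re-definition, the negative-gcd convention, and the $\fold$ map correctly transforms into a \emph{signed} cycle that agrees with $\bbH^{\trop,+}_k(\mathbf{x})-\bbH^{\trop,-}_k(\mathbf{x})$ on every cell, rather than merely on generic ones. A subtle point is that, in the $-$ chamber, cells whose graph has a special edge may admit a different number of compatible vertex orderings than in the $+$ chamber (as illustrated in Example \ref{ex:wcm05}, where a linear edge becomes a pair of linear ends and vice versa); one has to check that the $\fold$ map precisely interpolates between these possibilities, which amounts to showing that the cone structure of $\mathcal{G}$ restricted to $l_2\geq 0$ and $l_2\leq 0$ respectively is combinatorially identified with $\bbH^{\trop,+}_k$ and $\bbH^{\trop,-}_k$ on the relevant loci. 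Once this identification is in place, the proof reduces to comparing intersection weights on corresponding cells, which is a local computation in the spirit of Lemma \ref{lem:cellsweights}.
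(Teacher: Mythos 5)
Your overall strategy (restrict to cells with a special edge, set up the Cut/Glue/Fold bijection, then match weights) is the same as the paper's, but there are two concrete gaps.

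First, your trichotomy for cells is wrong in a case that the formula's summation bounds depend on. You assert that every cell whose graph contains a special edge contributes to $WC^{\trop}_{I,k}(\mathbf{x})$, either appearing on both sides with opposite weight-signs or on only one side. But there is a third case: cells where all vertices on one side of the special edge are \emph{moving}. Such a cell appears on both sides of the wall set-theoretically identically (reversing the special edge imposes no constraint on the ordering of fixed vertices), and its weight is also identical on both sides, because in every contraction $\Gamma'$ used to compute $w_{V_1,\ldots,V_k}(\Gamma)$ the special edge must be shrunk, so $\pm\epsilon$ never enters the gcd and the negative-gcd convention is vacuous. These cells therefore cancel in the difference --- this is exactly what the constant ends of direction $v_{23}$ do in Example \ref{ex:wcm05} --- and this cancellation is what justifies the bounds $\max\{0,1+k-r_2\}\le j\le\min\{k,r_1-1\}$, which exclude splittings with no fixed vertex on one side. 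Under your classification these cells would survive with weight $2w$ and have no counterpart on the right-hand side, so the argument as written does not close.

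Second, the weight comparison is not a product computation. For $k\ge 1$ the weight of a cell of $\bbH^{\trop}_k(\mathbf{x})$ is $w_{V_1,\ldots,V_k}(\Gamma)$, a gcd over contractions $\Gamma'$ of products of bounded-edge weights (equivalently, the gcd of the maximal minors of the full evaluation matrix); it is \emph{not} ``the product of weights of all bounded non-end edges,'' and the condition $(\ev_{E_1}-\ev_{E_2})^\ast(0)$ does not contribute a clean factor of $|\epsilon|$ --- in local coordinates that row is the full path between the two reference ends and has entries in many edge-length columns, so Remark \ref{rem-intersect} cannot be applied to it in isolation, and in general a gcd of products does not factor through the pieces. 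The argument that actually works is the one the paper gives: write the evaluation matrix for the cell $K$ of $\bbH^{\trop}_k(\mathbf{x})$ and the matrix for the corresponding cut-and-glue cell $K'$ (fixed vertices of $\Gamma_1$, of $\Gamma_2$, the row for $\ev_{E_1}-\ev_{E_2}$, and the row for $l_1$), strike the $l_1$ row and column, and check that the two matrices agree up to row operations --- the $\ev_{E_1}-\ev_{E_2}$ row is precisely the row evaluating the reference end of $\Gamma_2$ inside $\Gamma$, and the remaining rows differ by adding or subtracting it. Since row operations preserve the gcd of maximal minors, the weights agree. Your factorized computation would give the wrong answer in general, so this step needs to be replaced, not just tightened.
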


\begin{proof}
 We first show that graphs where the special edge separates two pieces $\Gamma_1$ and $\Gamma_2$ such that there is no fixed (i.e. $4$-valent) vertex in $\Gamma_1$ do not contribute to the wall crossing. 
 Analogously the same is true for types with no $4$-valent vertex in $\Gamma_2$. 
 This justifies the bounds of the first sum: $j \geq 1+k-r_2$ is equivalent to $r_2-(k-j)\geq 1$ which means that there is at least one $4$-valent vertex in $\Gamma_2$, the upper bound $j\leq r_1-1$ guarantees that there is at least one $4$-valent vertex in $\Gamma_1$. 

Assume $\Gamma$ is a type with no $4$-valent vertex in $\Gamma_1$. Then there are only moving vertices in $\Gamma_1$. The type $\Gamma$ appears both in $\bbH_k^{\trop,+}(\mathbf{x}) $ and in $\bbH_k^{\trop,-}(\mathbf{x}) $ : the special edge can point in any direction if none of the vertices past  the special edge  are free to move. The corresponding cells are identical set-theoretically.
 To compute their weight, we have to consider types $\Gamma'$ where all moving vertices merge with some fixed vertices. In any such type, the special edge must be shrunk since there are only moving vertices on one side of it. Hence the weight which equals the gcd of products of weights of possible $\Gamma'$ is identical for both cells, which thus cancel in the difference $\bbH_k^{\trop,+}(\mathbf{x}) - \bbH_k^{\trop,-}(\mathbf{x})$.

The above construction shows that  formula \eqref{twc-one} holds set-theoretically. 
It remains to show that also the weights agree. Since the cells of $\tilde{\bbH}^{\trop}_k(\mathbf{x})$ are in weight-preserving bijection with the cells of the push forward $\bbH^{\trop}_k(\mathbf{x}) \subset \calM_{0,n}$, we can show the equality of the weights for the cells before pushing forward with the forgetful map. Also, we can separate the contributions from the two sides of the wall, i.e. ``unfold'' on the cut-and-glue side of the equation.
Let $K$ be a cell 
contributing to the wall crossing
  corresponding to the combinatorial type $\Gamma$.
  Following the construction above we produce $\Gamma_1$ and $\Gamma_2$ which  identify a unique cell $K'$ on the right hand side of equation \eqref{twc-one}.  By Remark \ref{rem-intersect}  the weight of $K$ is the gcd of the absolute values of the maximal minors of the matrix of the evaluation maps evaluating the fixed vertices of $\Gamma$ in local coordinates. Analogously, the weight of the corresponding cut-and-glue cell $K'$ is the gcd of the absolute values of the maximal minors of the matrix of the evaluation maps evaluating the fixed vertices of $\Gamma_1$, the fixed vertices of $\Gamma_2$, of the map $\ev_{E_1}-\ev_{E_2}$
  and of the map $l_1$.
Local coordinates of the cell $K$ are given by the lengths of all bounded edges of $\Gamma$,
whereas for the cell $K'$ they are given by the bounded edges of $\Gamma$ except the special edge, plus the lengths $l_1$ and $l_2$. The map $l_1$ produces a row for the matrix with a one in the $l_1$-column and only zeros in the other columns. Any nonzero maximal minor must contain this column and equals the determinant of the submatrix where we erase the $l_1$-column and the row for the map $l_1$. Therefore we can erase this column and row without changing any minor in the matrix for $K'$. We show that the matrix for $K$ and the reduced matrix for $K'$ coincide up to row operations. Since row operations do not change the absolute value of any maximal minor, the claim follows. In the matrix for $K$, columns are the lengths in $\Gamma_1$, the length $l$ of the special edge and the lengths in $\Gamma_2$. As rows we have the evaluations of the contracted ends $n+2,\ldots,n+r-k$ which for each end $i$ are given by following the unique path in $\Gamma$ from the reference end $n+1$ (that we require to map to $0$) to $i$.
In the reduced matrix for $K'$, we have the analogous columns. Assume without restriction that $i_1=n+1$, i.e.\ the end mapping to zero is contained in $\Gamma_1$. We evaluate all other contracted ends of $\Gamma_1$ with respect to this reference end, i.e.\ we follow the path from this end to any other in $\Gamma_1$. This gives exactly the same rows for the evaluation of ends of $\Gamma_1$. In the matrix of $K$, we have a row evaluating the end that becomes the reference end in $\Gamma_2$ (that we fix to be at the point $p_{j_1}$). This row is exactly equal to the row evaluating $\ev_{E_1}-\ev_{E_2}$ --- the unique path in $\Gamma$ connecting the two reference ends. The remaining rows evaluate ends in $\Gamma_2$. For an end $i$ in $\Gamma_2$, we evaluate it in $K$ by following the path from the reference end in $\Gamma_1$ to $i$. We evaluate it in $K'$ by following the path from the reference end in $\Gamma_2$ to $i$. These two rows just differ by the path connecting the two reference ends which by the above is also a row of both matrices. Hence we can perform row operations
 to produce the matrix for $K$ from the matrix for $K'$.
\end{proof}

\begin{example}
We establish the wall crossing formula \eqref{twc-one} for the wall crossing curve in $\calM_{0,5}$ from Example \ref{ex:wcm05}. 
The wall $\epsilon=x_1+x_4+x_5=0$ separates any cover $\Gamma$ containing an edge $\epsilon$ into $\Gamma_1$ and $\Gamma_2$, where $\Gamma_1$ has the ends $x_1,x_4,x_5$ and $-\epsilon$ and $\Gamma_2$ has the ends $x_2$, $x_3$ and $\epsilon$. We have two contracted marked ends. Since both pieces must contain at least one to have at least one $4$-valent vertex, both contain exactly one. 
We first assume that the end $6$  (required to map to zero) is in $\Gamma_1$. For $\Gamma_2$, there is exactly one type, where all four ends (including the contracted end $7$) are incident to one $4$-valent vertex. We have  a copy of $\IR$ parametrizing all possibilities with coordinate $l_2$.

For $\Gamma_1$, there are six possible types, since there are three types in $\calM_{0,4}$ each of which has two $3$-valent vertices, and we can decide which of the two vertices should be equipped with the contracted end $6$ and become $4$-valent. 
We neglect the length of the extra end $l_1$ that is required to be zero.
We  have a ray for each of the six types, the local coordinate is the length of the bounded edge that we denote by $l$. 
Figure \ref{fig:6types} shows the six types together with the part to which they are glued.

\begin{figure}[tb]
\input{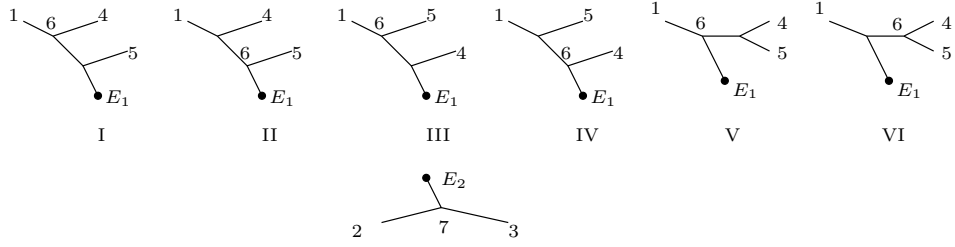}
\caption{The possible types for $\Gamma_1$ and $\Gamma_2$ after cutting, assuming $6$ is in $\Gamma_1$.}
\label{fig:6types}
\end{figure}

For each type, we  write down the gluing equation $\ev_{E_1}-\ev_{E_2}=0$ in local coordinates. Remember we require $6$ to be mapped to zero and $7$ to one.
The equations are:

\begin{align*}
 \mathrm{I}:\qquad &  0+l (x_1+x_4)+\epsilon l_2=1, \\
 \mathrm{II, IV,V}:\qquad &  0+l_2\epsilon=1,\\
 \mathrm{III}:\qquad &0+l (x_1+x_5)+\epsilon l_2=1, \\
 \mathrm{VI}:\qquad & 0-l (-x_4-x_5)+l_2\epsilon=1.
\end{align*}

These gluing equations cut out the tropical variety depicted in Figure \ref{fig:gluing} from $\Psi_6\cdot \calM_{0,1}(\TP,x_1,x_4,x_5,-\epsilon)\times \Psi_7\cdot \calM_{0,1}(\TP,x_2,x_3,\epsilon)\times \IR$ which consists of six rays 
We also draw the image under the folding map. The rays that are mapped from the orthant where $l_2$ is negative via the folding map are drawn with dotted lines.

\begin{figure}[tb]
\input{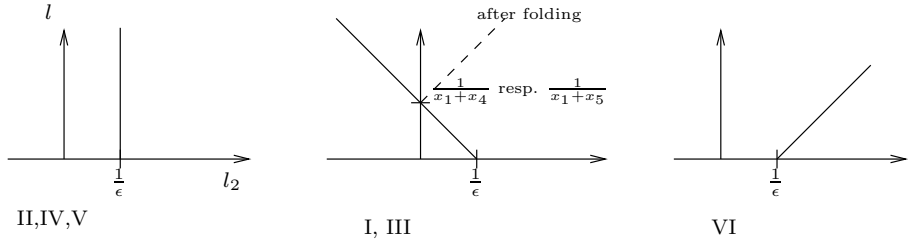}
\caption{The variety obtained from gluing and its image under folding for the first summand.}
\label{fig:gluing}
\end{figure}

Analogously, if end $7$ is in $\Gamma_1$ and $6$ is in $\Gamma_2$, we have the types depicted in Figure \ref{fig:6types} but with $6$ and $7$ exchanged.
The gluing equations in local coordinates are as above, but with the role of $0$ and $1$ exchanged.
Figure \ref{fig:gluing2} shows the tropical variety cut out by the gluing equation from $\Psi_7\cdot \calM_{0,1}(\TP,x_1,x_4,x_5,-\epsilon)\times \Psi_6\cdot \calM_{0,1}(\TP,x_2,x_3,\epsilon)\times \IR$ and its image under folding. As before, rays that appear in the positive orthant after folding are drawn as dotted lines.

\begin{figure}[tb]
\input{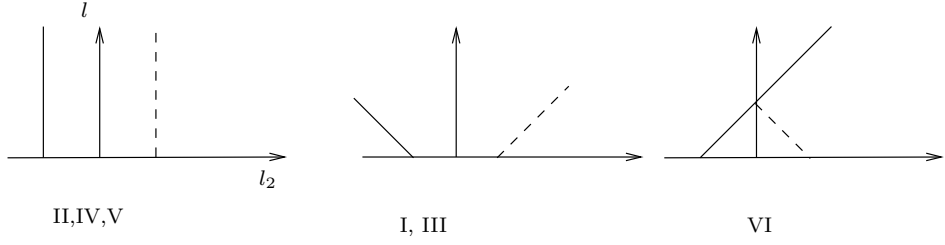}
\caption{The variety obtained from gluing and its image under folding for the second summand.}
\label{fig:gluing2}
\end{figure}

Under the forgetful map, the four orthants labeled I and II go to the cone spanned by $v_{23}$ and $v_{14}$, the four orthants labeled III and IV go to the cone spanned by $v_{23}$ and $v_{15}$ and the remaining four to the cone spanned by $v_{23}$ and $v_{45}$. We get the picture  seen in  Example \ref{ex:wcm05}.

\end{example}

We now group summands in the cut-and-glue side of the equation to give a nicer interpretation of formula \eqref{twc-one}.
If we denote by $\tilde{\bbH}^{\trop}_k(\mathbf{x})$ the preimage of $\bbH^{\trop}_k(\mathbf{x})$ under the forgetful map, then as in Remark \ref{rem:equivalent} the rational equivalence class of the loci 
\begin{align*}& \Big( \Psi_{n_1+2}\cdot \prod_{l=2}^{r_1-j} \big(\Psi_{l+n_1+1} \cdot \ev_{l+n_1+1}^{\ast}(p_{i_{l}}) \big)      
\cdot \calM_{0,r_1-j}(\TP,(\mathbf{x}_I,-\epsilon)) \Big) \times \IR  \\&\sim \tilde{\bbH}^{\trop}_j(\mathbf{x}_I,-\epsilon) \times \IR\end{align*}
 does not depend on the exact choice of $i_1<\ldots<i_{r_1-j}$, and analogously for the other loci in the sum. Up to rational equivalence we can thus group all summands of the second sum into one just counting how many such summands there are: $\binom{r-k}{r_1-j}=\binom{r-k}{|I|-1-j}$. Then \eqref{twc-one} becomes
\begin{align*}
 &WC^{\trop}_{I,k}(\mathbf{x})\sim\\ & \ft_{\ast} \Bigg(  \sum_{j=\max\{0,1+k-r_2\}}^{\min\{k,r_1-1\}}  
\binom{r-k}{r_1-j}  \fold   \bigg( \Big(\tilde{\bbH}^{\trop}_j(\mathbf{x}_I,-\epsilon) \times \IR \times \tilde{\bbH}^{\trop}_{k-j}(\mathbf{x}_{I^c},\epsilon)\times \IR \Big)
\\&
\times 
 (\ev_{E_1}-\ev_{E_2})^\ast(0)\cdot l_1^\ast(0)\bigg)\Bigg).
\end{align*}

If we furthermore denote
\begin{align*}
 &\bbH^{\trop}_j(\mathbf{x}_I,-\epsilon)\boxtimes\bbH^{\trop}_{k-j}(\mathbf{x}_{I^c},\epsilon) :=\\&
\fold   \left( \Big(\tilde{\bbH}^{\trop}_j(\mathbf{x}_I,-\epsilon) \times \IR \times \tilde{\bbH}^{\trop}_{k-j}(\mathbf{x}_{I^c},\epsilon)\times \IR \Big) (\ev_{E_1}-\ev_{E_2})^\ast(0)\cdot l_1^\ast(0)\right)
\end{align*}
we obtain a formula for the tropical wall crossing that looks very similar to the classical wall crossing formula  \eqref{eq:wc}.

\begin{cor}[Tropical Wall Crossing, reloaded]
\label{cor:twc}
Let $I\subseteq \{1, \ldots, n\}$ and consider the wall $W_I=\{\epsilon:=\sum_{i\in I}x_i =0\} $. Then:
\begin{align*}
 &WC^{\trop}_{I,k}(\mathbf{x})\sim\\ & \ft_{\ast} \left( \sum_{j=\max\{0,1+k-r_2\}}^{\min\{k,r_1-1\}}  \binom{r-k}{|I|-1-j}          \bbH^{\trop}_j(\mathbf{x}_I,-\epsilon)\boxtimes\bbH^{\trop}_{k-j}(\mathbf{x}_{I^c},\epsilon) \right).
\end{align*}

\end{cor}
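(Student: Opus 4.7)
The plan is to derive this as a straightforward repackaging of the first-version wall-crossing formula \eqref{twc-one} proved in the preceding proposition, using rational equivalence to collapse the inner sum into a single binomial coefficient times a $\boxtimes$-product. First I would observe that the innermost sum in \eqref{twc-one} is indexed by the $\binom{r-k}{r_1-j}$ ways of partitioning the $r-k$ fixed contracted ends $n+2,\ldots,n+r-k$ (together with the ``zero'' end $n+1$) between the two sides of the cut: $r_1-j$ of them land on $\Gamma_1$ and the remaining $r_2-(k-j)$ land on $\Gamma_2$. Since $n_1=|I|$, we have $r_1 = n_1-1 = |I|-1$, so this binomial equals $\binom{r-k}{|I|-1-j}$, matching the coefficient in the statement.

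Next I would invoke Remark \ref{rem:equivalent}, which asserts that tropical Hurwitz cycles depend on the choice of fixed points $p_i$ only up to rational equivalence. Applying this remark to each of the two factors independently, the summand indexed by $(i_1<\ldots<i_{r_1-j})$ inside \eqref{twc-one} is rationally equivalent to the ``generic representative''
\[
\big(\tilde{\bbH}^{\trop}_j(\mathbf{x}_I,-\epsilon)\times \IR\big)\;\times\;\big(\tilde{\bbH}^{\trop}_{k-j}(\mathbf{x}_{I^c},\epsilon)\times \IR\big),
\]
independently of the choice $(i_1,\ldots,i_{r_1-j})$. The point here is that the subsequent operations — intersection with the gluing divisor $(\ev_{E_1}-\ev_{E_2})^{\ast}(0)\cdot l_1^{\ast}(0)$, the folding map $\fold$, and the pushforward $\ft_\ast$ — all preserve rational equivalence. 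For the intersection and pushforward this is standard tropical intersection theory \cite{AR07}; for $\fold$ it follows because $\fold$ is a local isomorphism away from the codimension-one locus $\mathcal{G}\cdot l_2^{\ast}(0)$, so it induces a well-defined map on rationally equivalent classes of top-dimensional cycles.

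Once rational equivalence is established, the $\binom{r-k}{r_1-j}$ summands in the inner sum of \eqref{twc-one} collapse into a single term carrying this multiplicity. By the definition of $\bbH^{\trop}_j(\mathbf{x}_I,-\epsilon)\boxtimes\bbH^{\trop}_{k-j}(\mathbf{x}_{I^c},\epsilon)$ given immediately before the corollary statement, this single term is precisely
\[
\bbH^{\trop}_j(\mathbf{x}_I,-\epsilon)\boxtimes\bbH^{\trop}_{k-j}(\mathbf{x}_{I^c},\epsilon),
\]
so formula \eqref{twc-one} rewrites as the asserted sum. The main technical point — which one must verify carefully but which is already implicitly handled in the proof of the preceding proposition — is that the folding map genuinely commutes with rational equivalence at the level of cycles; beyond this, the argument is purely combinatorial bookkeeping of the summation index and the relation $r_1=|I|-1$.
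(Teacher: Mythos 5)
Your proposal is correct and follows essentially the same route as the paper: the paper likewise invokes Remark \ref{rem:equivalent} to identify each summand of the inner sum of \eqref{twc-one} with $\tilde{\bbH}^{\trop}_j(\mathbf{x}_I,-\epsilon)\times\IR$ (and its counterpart for $I^c$) up to rational equivalence, counts the $\binom{r-k}{r_1-j}=\binom{r-k}{|I|-1-j}$ choices of which contracted ends land on each side of the cut, and collapses the sum into the $\boxtimes$-product. Your additional remark on why $\fold$ and the pushforward respect rational equivalence is a point the paper leaves implicit, but it does not change the argument.
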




\subsection{Hurwitz Curves: Take three}

A few features of Example \ref{ex:wcm05} can be generalized to the general one-dimensional wall crossing  case immediatly. Constant ends of direction $v_I$ cancel in the wall crossing. Constant ends of other directions can appear on both sides of the wall but will show up in the wall crossing with a positive sign. Let $\Gamma^+$ be a graph in $\tilde{\bbH}^{\trop,+}_k(\mathbf{x})$ with a special edge and $V$ a choice of a moving vertex corresponding to a linear end. Then we have $m_V(\Gamma^+)$ linear ends in $\tilde{\bbH}^{\trop,+}_k(\mathbf{x})$, and $m_V(\Gamma^-)$ linear edges in $\tilde{\bbH}^{\trop,-}_k(\mathbf{x})$ (where $\Gamma^-$ denotes the graph with the special edge reversed). All such edges come with the same weight in the wall crossing.  Analogously, $m_V(\Gamma^+)$ quadratic edges are replaced by $m_V(\Gamma^-)$ quadratic edges when crossing the wall, also coming with the same weight.

\bibliographystyle{amsalpha}
\bibliography{biblio}

\end{document}